\title{Bogomolov-Sommese vanishing and liftability for surface pairs in positive characteristic}
\author{Tatsuro Kawakami}
\email{tatsurokawakami0@gmail.com}
\address{Department of Mathematics, Graduate School of Science, Kyoto University, Kyoto 606-8502, Japan}
\def\phi{\varphi}
\def\epsilon{\varepsilon}
\def\tilde{\widetilde}
\def\mapsto{\longmapsto}
\def\log{\operatorname{log}}
\def\Hom{\operatorname{Hom}}
\def\Spec{\operatorname{Spec}}
\def\Supp{\operatorname{Supp}}
\def\Im{\operatorname{Im}}
\def\Exc{\operatorname{Exc}}
\def\max{\operatorname{max}}
\def\Sym{\operatorname{Sym}}
\def\res{\operatorname{res}}
\newcommand{\Q}{\mathbb{Q}} 
\newcommand{\C}{\mathbb{C}} 
\newcommand{\R}{\mathbb{R}} 
\newcommand{\Z}{\mathbb{Z}}
\newcommand{\PP}{\mathbb{P}}
\newcommand{\sO}{\mathcal{O}}
\theoremstyle{plain}
\newtheorem{thm}{Theorem}[section] 
\newtheorem{prop}[thm]{Proposition}
\newtheorem{lem}[thm]{Lemma}
\theoremstyle{definition} 
\newtheorem{defn}[thm]{Definition}
\newtheorem{eg}[thm]{Example} 
\theoremstyle{remark}
\newtheorem{rem}[thm]{Remark}
\newtheorem{defn and notation}[thm]{Definition and Notation}
\newtheorem*{notation}{Notation} 
\newtheorem*{cl}{Claim}
\newtheorem*{clproof}{Proof of Claim}
\keywords{Vanishing theorems; Liftability to the ring of Witt vectors; Differential forms; Positive characteristic.}
\subjclass[2020]{Primary 14F17, 14D15; Secondary 14E30, 14F10}
\begin{document}
\tolerance = 9999

\maketitle
\markboth{Tatsuro Kawakami}{Bogomolov-Sommese vanishing and liftability for surface pairs}

\begin{abstract}
We show that the Bogomolov-Sommese vanishing theorem holds for a log canonical projective surface $(X, B)$ in large characteristic unless the Iitaka dimension of $K_X+\lfloor B \rfloor$ is not equal to two.
As an application, we prove that a log resolution of a pair of a normal projective surface and a reduced divisor in large characteristic lifts to the ring of Witt vectors when the Iitaka dimension of the log canonical divisor is less than or equal to zero. 
Moreover, we give explicit and optimal bounds on the characteristic unless their Iitaka dimensions are equal to zero.
\end{abstract}

\tableofcontents

\section{Introduction}
Vanishing theorems involving differential sheaves play a significant role in the analysis of algebraic varieties. 
The Bogomolov-Sommese vanishing theorem, which was originally proved in \cite{Bog}, is one of the most important tools of this kind and has been studied by many authors (see \cite{Gra15}, \cite{GKK}, \cite{GKKP}, \cite{JK}, \cite{SS85} for example).

\begin{thm}[\textup{Bogomolov-Sommese vanishing theorem, \cite[Corollary 1.3]{Gra15}}]\label{BSVoverC}
Let $(X, B)$ be a log canonical (lc, for short) projective pair over the field of complex numbers $\C$.
Then
\[
H^0(X, (\Omega_X^{[i]}(\log\,\lfloor B\rfloor)\otimes \sO_X(-D))^{**})=0
\]
for every $\Z$-divisor $D$ on $X$ satisfying $\kappa(X, D)>i$.
\end{thm}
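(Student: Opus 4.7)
My plan is to reduce the theorem on the singular pair $(X,D)$ to the classical Bogomolov--Sommese vanishing on a log smooth pair by means of a log resolution, using the extension theorem for reflexive differential forms on log canonical pairs.

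First, let $\pi\colon \tilde X\to X$ be a log resolution of $(X,\lfloor D\rfloor)$, and set $\tilde D := \pi^{-1}_*\lfloor D\rfloor + \Exc(\pi)_{\red}$, so that $\tilde D$ is an SNC divisor on the smooth projective variety $\tilde X$. The key input is the extension theorem of Greb--Kebekus--Kov\'acs--Peternell (see \cite{GKKP}), which for the lc pair $(X,D)$ yields a natural isomorphism
\[
\pi_*\Omega^i_{\tilde X}(\log \tilde D)\;\cong\;\Omega_X^{[i]}(\log\lfloor D\rfloor).
\]
Twisting by $\sO_X(-B)$, reflexifying, and taking global sections gives an inclusion
\[
H^0\bigl(X,(\Omega_X^{[i]}(\log\lfloor D\rfloor)\otimes\sO_X(-B))^{**}\bigr)\;\hookrightarrow\; H^0\bigl(\tilde X,\Omega^i_{\tilde X}(\log\tilde D)\otimes\sO_{\tilde X}(-\pi^*B)\bigr),
\]
so it suffices to prove that the right-hand side vanishes.

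Next, suppose that there is a nonzero section $s$ of $\Omega^i_{\tilde X}(\log\tilde D)\otimes\sO_{\tilde X}(-\pi^*B)$. This yields a nonzero map $\sO_{\tilde X}(\pi^*B)\to \Omega^i_{\tilde X}(\log\tilde D)$, and by taking the saturation of its image we obtain an invertible subsheaf $L\subseteq \Omega^i_{\tilde X}(\log\tilde D)$ with $L\supseteq \sO_{\tilde X}(\pi^*B)$. Since Iitaka dimension is monotone under such inclusions and birational pullback, we have
\[
\kappa(\tilde X,L)\;\ge\;\kappa(\tilde X,\pi^*B)\;=\;\kappa(X,B)\;>\;i.
\]
However, the classical Bogomolov--Sommese vanishing in the log smooth case (cf.\ \cite{Bog}, or Esnault--Viehweg) asserts that any invertible subsheaf of $\Omega^i_{\tilde X}(\log\tilde D)$ has Iitaka dimension at most $i$, a contradiction. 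This completes the proof.

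The genuinely deep step is the extension statement $\pi_*\Omega^i_{\tilde X}(\log\tilde D)\cong\Omega_X^{[i]}(\log\lfloor D\rfloor)$ for lc pairs; its proof over $\C$ relies on Hodge-theoretic and MMP techniques and is the main reason the statement is restricted to characteristic zero (indeed, the rest of the paper is precisely concerned with overcoming this obstruction in positive characteristic). The classical smooth Bogomolov--Sommese, by contrast, is a comparatively soft consequence of covering tricks together with the non-existence of sufficiently positive subsheaves in $\Omega^i(\log)$.
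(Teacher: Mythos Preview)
The paper does not give its own proof of Theorem~\ref{BSVoverC}; it is quoted from \cite[Corollary~1.3]{Gra15} as background. The paper does, however, sketch the characteristic-zero argument in Remark~\ref{Remark:EXT2}, and your overall strategy---reduce to a log resolution via the GKKP extension theorem, then invoke the classical log-smooth Bogomolov--Sommese vanishing---is exactly that strategy.

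There is, however, a genuine gap in your execution of the reduction. The divisor $B$ is an arbitrary $\Z$-divisor on the singular variety $X$ and need not be $\Q$-Cartier, so the symbol $\pi^*B$ has no a~priori meaning, and the displayed inclusion
\[
H^0\bigl(X,(\Omega_X^{[i]}(\log\lfloor D\rfloor)\otimes\sO_X(-B))^{**}\bigr)\hookrightarrow H^0\bigl(\tilde X,\Omega^i_{\tilde X}(\log\tilde D)\otimes\sO_{\tilde X}(-\pi^*B)\bigr)
\]
does not follow formally from the untwisted isomorphism $\pi_*\Omega^i_{\tilde X}(\log\tilde D)\cong\Omega_X^{[i]}(\log\lfloor D\rfloor)$. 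What one obtains by adjunction and reflexification is a map $\sO_{\tilde X}(B')\to\Omega^i_{\tilde X}(\log\tilde D)$ with $\sO_{\tilde X}(B')=(\pi^*\sO_X(B))^{**}$, and for this $B'$ the natural restriction only yields $\kappa(\tilde X,B')\le\kappa(X,B)$ (cf.\ Lemma~\ref{push}(2)), which is the wrong inequality for your saturation argument. Passing to the saturation $L\supseteq\sO_{\tilde X}(B')$ does not repair this: you need $\kappa(\tilde X,L)\ge\kappa(X,B)$, not merely $\kappa(\tilde X,L)\ge\kappa(\tilde X,B')$.

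The paper flags precisely this point in Remark~\ref{Remark:EXT2}: over $\C$ one constructs a suitable $B_Y$ on the resolution with $\kappa(\tilde X,B_Y)=\kappa(X,B)$ by passing to an index-one cover of $B$, which is \'etale in codimension one in characteristic zero; see \cite[7.C, Proof of Theorem~7.2]{GKKP} for the details. That step is not formal, and its failure in characteristic $p$ (when $p$ divides the Cartier index) is exactly what motivates Proposition~\ref{ext thm}. So your sketch has the right architecture but omits the one step that carries the weight of the reduction.
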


In Theorem \ref{BSVoverC}, $\kappa(X, D)$ denotes the Iitaka dimension of a $\Z$-divisor $D$, $(-)^{**}$ denotes double dual, and $\Omega_X^{[i]}(\log\,\lfloor B\rfloor)$ denotes the $i$-th logarithmic reflexive differential form of the pair $(X, \lfloor B\rfloor)$, where $\lfloor B\rfloor$ is the round-down of $B$. We refer to Definition \ref{Definition:Iitaka dim} and \textit{Notation} for details.
The logarithmic extension theorem for $(n+1)$-dimensional lc pairs can be deduced from the Bogomolov-Sommese vanishing theorem for $n$-dimensional log Calabi-Yau pairs (see \cite[Section 9]{Gra}). 
The vanishing theorem is also applied to show the vanishing of the second cohomology of the tangent sheaf of lc projective surfaces with big anti-canonical divisors so that they have no local-to-global obstruction (see \cite[Proposition 3.1]{HP}).

In this paper, we discuss an analog of Theorem \ref{BSVoverC} when the pair $(X, B)$ is defined over an algebraically closed field of positive characteristic and $\dim\,X=2$.
It is well-known that the Bogomolov-Sommese vanishing theorem fails when the canonical divisor $K_X$ is big. For example, it is not difficult to see that the sheaf of first differential forms of Raynaud's surface \cite{Ray} contains an ample invertible sheaf. 
Moreover, Langer \cite[Section 8]{Lan} constructed a pair $(S, F)$ of a smooth rational surface $S$ and a disjoint union of smooth rational curves $F$ such that $\Omega_S(\log\,F)$ contains a big invertible sheaf in every characteristic (see also \cite[Section 11]{Langer19}). In other words, the Bogomolov-Sommese vanishing theorem fails even if $X$ is a smooth rational surface.
On the other hand, we can observe that the log canonical divisor $K_S+F$ is big except when the characteristic is equal to two (see Example \ref{Example:Langer's surface} for the details). 
Therefore, it is natural to ask whether the Bogomolov-Sommese vanishing theorem holds when the log canonical divisor is not big and the characteristic is sufficiently large.
We give an affirmative answer to this question.

\begin{thm}\label{BSV, Intro}
There exists a positive integer $p_0$ with the following property. 
Let $(X, B)$ be an lc projective surface pair over an algebraically closed field of characteristic $p>p_0$. 
If $\kappa(X, K_X+\lfloor B \rfloor)\neq 2$, then 
\[
H^0(X, (\Omega_X^{[i]}(\log\,\lfloor B\rfloor)\otimes \sO_X(-D))^{**})=0
\]
for every $\Z$-divisor $D$ on $X$ satisfying $\kappa(X, D)>i$.
Moreover, if $\kappa(X, K_X+\lfloor B \rfloor)=-\infty$ (resp.~$\kappa(X, K_X+\lfloor B \rfloor)=1$), then we can take $p_0=5$ (resp.~$p_0=3$) as an optimal bound. If $\kappa(X, K_X+\lfloor B \rfloor)=0$, then we can take $p_0$ as the maximum number of the Gorenstein index of every klt Calabi-Yau surface over every algebraically closed field.
\end{thm}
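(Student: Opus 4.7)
The plan is to reduce to the case $i = 1$, pass to a log resolution, and then proceed by case analysis on $\kappa := \kappa(X, K_X + \lfloor D \rfloor) \in \{-\infty, 0, 1\}$ using the log MMP for surface pairs. The cases $i \geq 2$ are vacuous on a surface since $\kappa(X, B) \leq 2$, and $i = 0$ is immediate because a big divisor admits no global section in its negative. For $i = 1$, a nonzero section of $(\Omega_X^{[1]}(\log \lfloor D \rfloor) \otimes \sO_X(-B))^{**}$ corresponds to an inclusion $\sO_X(B) \hookrightarrow \Omega_X^{[1]}(\log \lfloor D \rfloor)$ with $B$ big. Using that for an lc pair one has $\Omega_X^{[1]}(\log \lfloor D \rfloor) \cong \pi_* \Omega_Y^1(\log E)$ under a log resolution $\pi \colon (Y, E) \to (X, \lfloor D \rfloor)$, this transfers to an inclusion of a big line bundle into $\Omega_Y^1(\log E)$ on the smooth SNC pair $(Y, E)$; the birational invariance of log Kodaira dimension for lc pairs preserves $\kappa$ under this passage.

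Next, run a $(K_Y + E)$-MMP and analyse each case on the output. For $\kappa = -\infty$ with $p > 5$, the minimal model is a log del Pezzo surface or a log Mori fibre space of relative dimension one; such pairs are sufficiently tame in $p > 5$ (e.g.\ they admit liftings to $W_2$ and Kodaira-type vanishing) to descend Theorem \ref{BSVoverC}, and Langer's example shows that the bound $p_0 = 5$ is sharp. For $\kappa = 1$ with $p > 3$, the Iitaka fibration $f \colon Y \to C$ has smooth elliptic general fibres (no quasi-elliptic pathology in $p > 3$), and the relative logarithmic cotangent sequence
\[
0 \to f^* \Omega^1_C(\log \Delta) \to \Omega^1_Y(\log E) \to \Omega^1_{Y/C}(\log E) \to 0
\]
forces any big subsheaf to project nontrivially onto $f^* \Omega^1_C(\log \Delta)$, contradicting $\kappa(X, B) > 1$ by a numerical degree argument on $C$. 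For $\kappa = 0$, take the index-one cover to reduce to a genuine Calabi-Yau surface pair; if $p$ exceeds the Gorenstein index, the cover is étale in codimension one so reflexive log differentials transport, and one concludes via known vanishing on K3, abelian, Enriques, and bielliptic surfaces.

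The main obstacle is the $\kappa = 0$ case: a uniform bound for $p_0$ would require uniform boundedness of the Gorenstein indices of klt Calabi-Yau surfaces in positive characteristic, which is precisely why the statement is formulated implicitly in those terms. A secondary technical difficulty is propagating the big reflexive rank-one subsheaf of log differentials through each step of the MMP on lc surface pairs, since extension theorems for logarithmic reflexive differentials are considerably subtler in positive characteristic than over $\mathbb{C}$ and must be handled with care around wild quotients and inseparability phenomena in the index-one cover construction.
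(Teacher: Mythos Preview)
Your overall architecture matches the paper's --- reduce to $i=1$, pass to a dlt model, run the MMP, and treat the three values of $\kappa$ separately --- and you correctly identify the Gorenstein-index issue in the $\kappa=0$ case. However, there is a genuine gap in the step where you claim that, after transferring to an inclusion of a big line bundle into $\Omega_Y^1(\log E)$ on the log resolution, the $\kappa=-\infty$ case follows from $W_2$-liftability and ``Kodaira-type vanishing''. This is false as stated: Bogomolov--Sommese vanishing for a \emph{merely big} divisor can fail on a $W_2$-liftable log smooth surface (the paper notes this explicitly after Theorem~\ref{Hara's vanishing}, citing Langer). Hara's Akizuki--Nakano vanishing only gives $H^0(Y,\Omega_Y(\log E)\otimes\sO_Y(-\lceil B'\rceil))=0$ when $B'$ is \emph{nef and big} with fractional part supported in $E$. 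Graf's extension isomorphism $\pi_*\Omega_Y(\log E)\cong\Omega_X^{[1]}(\log D)$, which is all you invoke, yields only \emph{some} big $\Z$-divisor $B_Y$ on $Y$, with no control on nefness or on the support of a fractional part.

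The paper's principal technical input is precisely the sharpened extension theorem (Proposition~\ref{ext thm}) showing that the specific divisor $\lceil f^*B\rceil$ works, i.e.\ $f_*(\Omega_Y^{[1]}(\log D_Y)\otimes\sO_Y(-\lceil f^*B\rceil))^{**}\cong(\Omega_X^{[1]}(\log D)\otimes\sO_X(-B))^{**}$. One then runs the MMP until the base is a klt del Pezzo of Picard rank one, so the pushed-forward $B$ becomes ample $\Q$-Cartier; its pullback is nef and big with fractional part in the exceptional locus, and only then does Hara's vanishing apply. You label the propagation of the big subsheaf through the MMP a ``secondary technical difficulty'', but it is in fact the main new idea, and your outline does not supply it. (Two smaller remarks: in your $\kappa=1$ sketch the logic is inverted --- the map to $\Omega_{Y/C}$ vanishes on a general fibre, forcing the subsheaf into $f^*\Omega_C$, which then cannot be big; and in the $\kappa=0$ case the paper separates $D'\neq 0$, which reduces to $\kappa=-\infty$ by dropping a boundary component, from $D'=0$, where one must first use the Zariski decomposition to make $B$ nef and big before passing to the index-one cover.)
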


In Theorem \ref{BSV, Intro}, a klt Calabi-Yau surface means a klt projective surface whose canonical divisor is numerically trivial.
If the base field is an algebraically closed field of characteristic zero, then the Gorenstein index of a klt Calabi-Yau surface is less than or equal to $21$ by \cite[Theorem C (a)]{Bla}. In general, there exists a uniform bound on the Gorenstein index independent of the choice of the algebraically closed base field (see Lemma \ref{max Gor index}), but its explicit value is not known. 

It is worth pointing out that Theorem \ref{BSV, Intro} is new even when $B=0$ if $X$ is singular. In particular, Theorem \ref{BSV, Intro} shows that lc projective surfaces whose canonical divisors have negative Iitaka dimension have no local-to-global obstruction when the characteristic is bigger than five (see Proposition \ref{tangent} for the details).

The key step of the proof of Theorem \ref{BSV, Intro} is Proposition \ref{ext thm}, which is a generalization of Graf's logarithmic extension theorem \cite[Theorem 1.2]{Gra} for lc surfaces in positive characteristic. 
The Bogomolov-Sommese vanishing theorem in characteristic zero (Theorem \ref{BSVoverC}) is reduced to the case where $(X, B)$ is log smooth by the logarithmic extension theorem (see \cite{Gra} and \cite[7.C. Proof of Theorem 7.2]{GKKP}).
In this reduction process, we need the fact that an index one cover of $D$ is \'etale in codimension one. However, this fact is not necessarily true in characteristic $p>0$ when the Cartier index of $D$ is divisible by $p$. Therefore, we cannot apply Graf's logarithmic extension theorem directly to reduce Theorem \ref{BSV, Intro} to the case where $(X, B)$ is log smooth.
In order to overcome this issue, we prove Proposition \ref{ext thm}. 

Moreover, reducing Theorem \ref{BSV, Intro} to the case of log smooth pairs is not sufficient because the Bogomolov-Sommese vanishing theorem is not known even for such pairs in positive characteristic.
Proposition \ref{ext thm} enables us to apply the logarithmic Akizuki-Nakano vanishing theorem for $W_2$-liftable pairs, which was proved by Hara \cite{Har}, to our log smooth pairs (see Remark \ref{Remark:EXT2} for details).

Next, we fix a log smooth surface pair $(Y, B_Y)$ defined over an algebraically closed field $k$ of characteristic $p>0$.
Let us consider the liftability of $(Y, B_Y)$ to the ring $W(k)$ of Witt vectors.
When $\kappa(Y, K_Y)\leq 0$, $B_Y=0$, and $p>3$, it is well-known that $Y$ lifts to $W(k)$ (see \cite[Proposition 2.6]{Ito}, \cite[Section 11]{Liedtke(Book)}, and \cite[Proposition 11.1]{Oort} for example).
However, when $B_Y\neq 0$, the pair $(Y, B_Y)$ does not have such a lifting in general even when $\kappa(Y, K_Y)=-\infty$. 
Indeed, the counterexample $(S, F)$ to the Bogomolov-Sommese vanishing theorem constructed by Langer does not lift to $W_2(k)$, which means that $(Y, B_Y)$ does not always lift to $W(k)$ even when $Y$ is a smooth rational surface. 
On the other hand, Arvidsson-Bernasconi-Lacini \cite[Theorem 1.2]{ABL} showed that $(Y, B_Y)$ lifts to $W(k)$ when $(Y, B_Y)$ can be realized as some log resolution of a klt projective surface over $k$ of characteristic $p>5$ such that the canonical divisor has negative Iitaka dimension, and in particular, $\kappa(Y, K_Y+B_Y)=-\infty$.
Therefore, it is natural to ask $(Y, B_Y)$ lifts to $W(k)$ when $\kappa(Y, K_Y+B_Y)\leq 0$ and the characteristic $p$ is sufficiently large.
We give a complete answer to this question.

\begin{thm}\label{lift, Intro}
There exists a positive integer $p_0$ with the following property.
Let $X$ be a normal projective surface over an algebraically closed field $k$ of characteristic $p>p_0$, $B$ a reduced divisor on $X$, and
$\pi\colon Y\to X$ a log resolution of $(X, B)$.
Suppose that one of the following holds. 
\begin{enumerate}
    \item[\textup{(1)}] $\kappa(X, K_X+B)=-\infty$.
    \item[\textup{(2)}] $K_X+B\equiv 0$ and $B\neq0$.
    \item[\textup{(3)}] $\kappa(X, K_X+B)=0$.
\end{enumerate}
Then $(Y, \pi_{*}^{-1}B+\Exc(\pi))$ lifts to the ring $W(k)$ of Witt vectors. 
Moreover, when the condition (1) or (2) holds, we can take $p_0=5$ as an optimal bound.
\end{thm}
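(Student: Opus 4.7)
Set $E := \pi_*^{-1}D + \Exc(\pi)$, so $(Y, E)$ is log smooth. The goal is the vanishing
\[
H^2(Y, T_Y(-\log E)) = 0,
\]
which by the standard deformation theory of log smooth pairs kills every obstruction to lifting $(Y,E)$: the $W_2(k)$-lift then follows directly, iterating the obstruction argument yields a formal lift over $W(k)$, and Grothendieck's existence theorem applied to a lift of an ample line bundle on $Y$ (whose obstruction in $H^2(Y,\sO_Y)$ is controlled on the surfaces under consideration) algebraizes the formal lift to a projective one. In case (3) one may need to replace $W(k)$ by a finite extension $W'$ in order to accommodate the failure of tameness.

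By Serre duality on the smooth projective surface $Y$ together with the identification $\det\Omega^1_Y(\log E)\cong\omega_Y(E)$,
\[
H^2(Y, T_Y(-\log E))^{\vee}\cong H^0(Y, \Omega^1_Y(\log E)\otimes\omega_Y).
\]
To transport this to a statement amenable to Theorem \ref{BSV, Intro} on the singular model, I would push forward along $\pi$ and invoke the extension theorem (Proposition \ref{ext thm}) to identify $\pi_*\Omega^1_Y(\log E)$ with the reflexive sheaf $\Omega^{[1]}_X(\log D)$. Combining with the discrepancy identity $K_Y + E = \pi^*(K_X+D)+A$ and the projection formula, one obtains, after reducing to the case where $(X,D)$ is lc via a $(K_X+D)$-MMP for surface pairs in positive characteristic, an injection
\[
H^0(Y, \Omega^1_Y(\log E)\otimes\omega_Y) \hookrightarrow H^0(X, (\Omega^{[1]}_X(\log D)\otimes\sO_X(-B))^{**})
\]
with $B$ linearly equivalent to $-K_X$ modulo an exceptional correction arising from $A$. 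Theorem \ref{BSV, Intro}, whose hypothesis $\kappa(X,K_X+D)\neq 2$ holds in each of the three cases, then closes the vanishing as soon as $\kappa(X,B)>1$.

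The bigness $\kappa(X,B)>1$ is verified case by case, exploiting the outcomes of the $(K_X+D)$-MMP. In case (1), the MMP terminates at a log Mori fiber space on which $-(K_X+D)$ is ample, from which $B$ inherits bigness; both $p_0=5$ and $W'=W(k)$ work, matching the optimal bound. In case (2), $K_X+D\equiv 0$ with $D\neq 0$ forces $-K_X$ to be linearly equivalent to $D$ up to numerical equivalence, and the log Calabi-Yau geometry of the minimal model ensures the required bigness after a small perturbation; again $p_0=5$ and $W'=W(k)$. In case (3), a good minimal model of $(X,D)$ is a klt Calabi-Yau surface, so $p_0$ is forced to dominate the uniform Gorenstein-index bound (Lemma \ref{max Gor index}), and because the index-one cover of the Gorenstein-index divisor is not necessarily \'etale in codimension one one must pass to a mixed-characteristic extension $W'\supset W(k)$. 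The main obstacle I foresee is case (3): here $p_0$ is tied to the poorly understood uniform bound on the Gorenstein index in positive characteristic, and the non-tameness of the relevant index-one cover precludes working over $W(k)$ itself, which is precisely why this case does not admit the explicit answer available in cases (1) and (2).
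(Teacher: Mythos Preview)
Your outline for cases (1) and (2) is close in spirit to the paper's Proposition~\ref{van of tan}, but the step ``$\kappa(X,B)>1$ with $B\sim -K_X$'' does not go through as stated. After running a $(K_X+D)$-MMP the output may be a Mori fiber space over a curve, and then $-K_X$ is only relatively ample, not big; for instance $X=\PP^1\times E$ with $E$ elliptic and $D$ the sum of two $\PP^1$-fibers satisfies (2) with $\kappa(X,-K_X)=1$. The paper does not try to make $-K_X$ big. Instead it pushes $H^2(Y,T_Y(-\log D_Y))$ into $H^2$ on the MMP output (Remark~\ref{Remark:tangent}), and on the Mori fiber space treats the curve-base case by the relative statement Lemma~\ref{LCTF}, invoking Theorem~\ref{BSV, Intro} only in the Picard-rank-one del Pezzo case. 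Your ``small perturbation'' in case (2) does not fix this.

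The genuine gap is case (3). The vanishing $H^2(Y,T_Y(-\log E))=0$ is \emph{false} in general here: if the minimal model is an abelian surface then $T_Y\cong\sO_Y^{\oplus 2}$ and $H^2(Y,T_Y)\cong k^2$, and Example~\ref{Example : sharpness for Kodaira dim 0} exhibits a canonical Calabi--Yau $X$ (a contraction of $21$ disjoint $(-2)$-curves on a supersingular K3) with $H^2(X,T_X)\neq 0$. So your obstruction-theoretic route cannot succeed, and the paper explicitly says so in the introduction. For case (3) the paper abandons the vanishing argument entirely: after a $(K_Y+D_Y)$-MMP one lands either in case (2) or on a klt Calabi--Yau surface $Y'$ with $D_{Y'}=0$, and then Propositions~\ref{Prop : Lift of canonical CY} and~\ref{Lift of strictly klt CY} prove liftability of a log resolution of $Y'$ directly---using the classical lifting results for abelian/hyperelliptic/K3/Enriques surfaces in the canonical case, and a Cascini--Tanaka--Witaszek boundedness argument (Lemma~\ref{Lift of log CY MFS}) in the strictly klt case. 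This is also the correct reason one only gets some DVR $W'$ rather than $W(k)$: abelian and K3 surfaces need not lift over $W(k)$ itself, which has nothing to do with tameness of an index-one cover.
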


In Theorem \ref{lift, Intro} (3), $p_0$ should be at least $19$ by Example \ref{Example : sharpness for Kodaira dim 0}, but it is not clear whether we can take $p_0$ as the maximum Gorenstein index of klt Calabi-Yau surfaces. 
In the proof of Theorem \ref{lift, Intro} (1) and (2), we apply Theorem \ref{BSV, Intro} to obtain the vanishing of $H^2(Y, T_Y(-\log\,\pi_{*}^{-1}B+\Exc(\pi)))$, where the obstruction to the lifting lives.
In (3), such a vanishing does not always hold.
Therefore, using an argument of Cascini-Tanaka-Witaszek \cite{CTW}, we show the boundedness of some $\epsilon$-klt log Calabi-Yau surfaces, from which we deduce the desired liftability (see Lemma \ref{Lift of log CY MFS} and Proposition \ref{Lift of strictly klt CY}).

As an application of Theorems \ref{BSV, Intro} and \ref{lift, Intro}, we obtain a Kawamata-Viehweg type vanishing for $\Z$-divisors on a normal projective surface.

\begin{thm}\label{KVV, Intro}
There exists a positive integer $p_0$ with the following property.
Let $X$ be a normal projective surface over an algebraically closed field of characteristic $p>p_0$ and $D$ a nef and big $\Z$-divisor on $X$.
Suppose that one of the following holds.
\begin{enumerate}
    \item[\textup{(1)}] $\kappa(X, K_X)\leq 0$.
    \item[\textup{(2)}] $\kappa(X, K_X)=1$ and $X$ is lc.
\end{enumerate}
Then $H^i(X, \sO_X(K_X+D))=0$ for all $i>0$. 
Moreover, if $\kappa(X, K_X)=-\infty$ (resp.~(2) holds), then we can take $p_0=5$ (resp.~$p_0=3$) as an optimal bound.
\end{thm}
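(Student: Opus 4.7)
The plan is to reduce both cases, via Serre duality and a log resolution, to the preceding theorems. Since $X$ is a normal projective surface and thus Cohen-Macaulay, Grothendieck duality applied to the reflexive sheaf $\sO_X(K_X+B)$ yields
\[
H^i(X,\sO_X(K_X+B))^{\vee}\cong H^{2-i}(X,\sO_X(-B)).
\]
Nefness and bigness of $B$ force $-B\cdot H<0$ for any ample $H$, so $\sO_X(-B)$ has no nonzero sections; thus the case $i=2$ is immediate and only $H^1(X,\sO_X(-B))=0$ remains to be proved. Fix a log resolution $\pi\colon Y\to X$ with reduced exceptional divisor $E=\Exc(\pi)$, and define $\pi^*B$ as the Mumford $\Q$-divisor pullback (well defined because the exceptional intersection form is negative-definite); its fractional part is supported on $E$. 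A standard relative vanishing argument for birational morphisms between surfaces, combined with the fact that reflexive Weil divisorial sheaves are determined in codimension one, reduces the problem to showing $H^i(Y,K_Y+\lceil\pi^*B\rceil)=0$ for $i>0$.

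For case (1), $\kappa(X,K_X)\leq 0$, Theorem \ref{lift, Intro} applied with $D=0$ yields that the log smooth pair $(Y,E)$ lifts to $W_2(k)$ for $p>p_0$, with $p_0=5$ optimal when $\kappa(X,K_X)=-\infty$. Since $\lceil\pi^*B\rceil$ is the round-up of a nef and big $\Q$-divisor whose fractional part lies in $E$, Hara's logarithmic Kawamata-Viehweg vanishing for $W_2$-liftable log smooth pairs then provides the required vanishing on $Y$. For case (2), $\kappa(X,K_X)=1$ with $X$ lc, Theorem \ref{lift, Intro} does not apply, so I would instead invoke Theorem \ref{BSV, Intro} with $D=0$ (permitted because $\kappa(X,K_X)=1\neq 2$) to obtain $H^0(X,(\Omega_X^{[1]}\otimes\sO_X(-B))^{**})=0$. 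Combining this vanishing with appropriate exact sequences of logarithmic differentials on $Y$ and with the structure of the Iitaka fibration $X\dashrightarrow C$ onto a smooth curve $C$ (well behaved when $\chara k>3$, which excludes quasi-elliptic degenerations) then yields the required $H^1$-vanishing.

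The hardest step is the push-down from $Y$ to $X$. Since $X$ is not assumed to have rational singularities, verifying $R^1\pi_*\sO_Y(K_Y+\lceil\pi^*B\rceil)=0$ and the identification $\pi_*\sO_Y(K_Y+\lceil\pi^*B\rceil)\cong\sO_X(K_X+B)$ requires careful control of the fractional correction $\lceil\pi^*B\rceil-\pi^*B$ on $E$ and possibly a judicious choice of log resolution. In case (2), bridging from the Bogomolov-Sommese $H^0$-vanishing of reflexive differentials to the $H^1$-vanishing of a line bundle is equally delicate and will rely on the explicit geometry of the Iitaka fibration in the lc setting with $\chara k>3$.
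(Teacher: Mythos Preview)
Your overall scaffold (Serre duality, then split into cases (1) and (2)) matches the paper. There are two places where your proposal diverges from the paper's argument, and in one of them a key idea is missing.

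\textbf{Case (1): working on the negative side avoids your ``hardest step''.} The paper does not attempt to push down $K_Y+\lceil\pi^*B\rceil$ from $Y$ to $X$. After Serre duality it stays on the negative side: for any log resolution $\pi\colon Y\to X$, the Leray spectral sequence gives an injection
\[
H^1(X,\pi_*\sO_Y(-\lceil\pi^*B\rceil))\hookrightarrow H^1(Y,\sO_Y(-\lceil\pi^*B\rceil)),
\]
and the identification $\pi_*\sO_Y(-\lceil\pi^*B\rceil)=\sO_X(-B)$ follows from $-\lceil\pi^*B\rceil=\lfloor-\pi^*B\rfloor$ together with reflexivity (this is Lemma~\ref{lem:Kodaira} in the paper). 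So there is no need to verify $R^1\pi_*(K_Y+\lceil\pi^*B\rceil)=0$ or $\pi_*\sO_Y(K_Y+\lceil\pi^*B\rceil)\cong\sO_X(K_X+B)$, neither of which is clear when $X$ may have non-rational singularities. After this injection, the liftability of $(Y,\Exc(\pi))$ from Theorem~\ref{lift, Intro} plus Hara's vanishing (Theorem~\ref{Hara's vanishing}) finishes the argument exactly as you propose.

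\textbf{Case (2): the missing Frobenius step.} Here there is a genuine gap. Invoking Theorem~\ref{BSV, Intro} once for $B$ and then hoping to recover $H^1(X,\sO_X(-B))=0$ from ``exact sequences of logarithmic differentials'' and the Iitaka fibration is not how the paper proceeds, and it is not clear how to make that sketch work. The paper instead applies Theorem~\ref{BSV, Intro} to every Frobenius power $p^eB$, obtaining
\[
H^0\bigl(X,(\Omega_X^{[1]}\otimes\sO_X(-p^eB))^{**}\bigr)=0\quad\text{for all }e\ge 0.
\]
By the argument of \cite[Lemma~2.5]{Kaw21b}, this family of vanishings forces the Frobenius-induced map
\[
H^1(X,\sO_X(-B))\hookrightarrow H^1(X,\sO_X(-p^eB))
\]
to be injective for all $e$. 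Then Lemma~\ref{lem:Kodaira} shows $H^1(X,\sO_X(-p^eB))=0$ for $e\gg 0$, which finishes the proof. The point you are missing is that the Bogomolov--Sommese vanishing controls the \emph{kernel of Frobenius on $H^1$}, and iterating Frobenius pushes you into the asymptotic range where the vanishing is elementary.
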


\begin{notation}
A \textit{variety} means an integral separated scheme of finite type over an algebraically closed field. 
A \textit{curve} (resp.~\textit{surface}) means
a variety of dimension one (resp.~two). A \textit{pair} $(X, B)$ consists of a normal variety of $X$ and an effective $\Q$-divisor $B$ with coefficients in $[0,1]\cap\Q$ such that the log canonical divisor $K_X+B$ is $\Q$-Cartier. 
We say a pair $(X,B)$ is \textit{log smooth} when $X$ is smooth and $B$ has a simple normal crossing support.
We refer to \cite[Section 2.3]{KM98} for definitions of singularities appearing in the minimal model program (e.g.~klt, dlt, \ldots).
Throughout this paper, we use the following notation:
\begin{itemize}
\item $\Exc(f)$: the reduced exceptional divisor of a birational morphism $f$.
\item $\lfloor D \rfloor$ (resp.~ $\lceil D \rceil$): the \textit{round-down} (resp.~ \textit{round-up}) of a $\Q$-divisor $D$.
\item $\mathcal{F}^{*}$: the dual of a coherent sheaf of $\mathcal{F}$. 
\item $\Omega_X^{[i]}(\log\,B)$: the \textit{$i$-th logarithmic reflexive differential form} $j_{*}\Omega_U^{i}(\log\,B)$, where $X$ is a normal variety, $B$ is a reduced divisor on $X$, $U$ is the log smooth locus of $(X, B)$, and $j\colon U\hookrightarrow X$ is the natural inclusion morphism.
\item $T_X(-\log\,B)\coloneqq (\Omega_X^{[1]}(\log\,B))^{*}$: the \textit{logarithmic tangent sheaf}, where $X$ is a normal variety and $B$ is a reduced divisor on $X$.
\item $W(k)$ (resp.~$W_n(k)$): the ring of Witt vectors (resp.~the ring of Witt vectors of length $n$), where $k$ is an algebraically closed field of positive characteristic.
\end{itemize}
\end{notation}

\section{Preliminaries}\label{sec:pre}
\subsection{The Iitaka dimension for \texorpdfstring{$\Z$}--divisors}
In this subsection, we recall the definition and basic properties of the Iitaka dimension of $\Z$-divisors. 
\begin{defn}[\textup{\cite[Definition 2.18]{GKKP}}]\label{Definition:Iitaka dim}
Let $X$ be a normal projective variety over an algebraically closed field $k$ and $D$ a $\Z$-divisor on $X$.  
We define the \textit{Iitaka dimension} $\kappa(X,D)\in\{-\infty,0,\cdots,\dim X\}$ of $D$ as follows.

If $H^0(X, \sO_X(mD)) = 0$ for all $m \in \Z_{>0}$, then we say that $D$ has \textit{Iitaka dimension 
$\kappa(X, D)\coloneqq-\infty$}.  Otherwise, set
\[ M \coloneqq \bigl\{ m\in \Z_{>0} \;\big|\; \dim_{k} H^0(X, \sO_X(mD)) > 0 \bigr\}, \]
and consider the natural rational mappings
\[ \phi_m : X \dasharrow \mathbb P\bigl(H^0(X, \sO_X(mD))^*\bigr) \quad \text{ for each } m \in M. \]
Note that we can consider the rational map as above since $\sO_X(mD)$ is invertible on the regular locus of $X$. 
The Iitaka dimension of $D$ is then defined as
\[ \kappa(X, D) \coloneqq \max_{m \in M} \bigl\{ \dim \overline{\phi_m(X)} \bigr\}. \]
When $D$ is a $\Q$-divisor, we define $\kappa(X, D)$ as $\kappa(X, mD)$, where $m$ is any positive integer such that $mD$ is a $\Z$-divisor.
We say a $\Q$-divisor $D$ is \emph{big} if $\kappa(X, D) = \dim X$.
Note that if $D$ is $\Q$-Cartier, then the above definition coincides with the usual definition (\cite[Definition 2.13]{Lazarsfeld}).
\end{defn}

\begin{defn}
Let $D$ be a $\Q$-divisor on a normal projective surface $X$.
We say $D$ is \textit{nef} if $D\cdot C\geq0$ for every curve $C$ on $X$.
We refer to \cite[Chapter 14, 14.24]{Bad} for the definition of the intersection number of $\Z$-divisors on a normal projective surface.
\end{defn}

\begin{rem}\label{nefness and bigness}\,
\begin{itemize}
    \item Let $f\colon Y\to X$ be a projective birational morphism of normal surfaces. We recall the \textit{Mumford pullback} by $f$. If $Y$ is smooth, then we refer to \cite[Chapter 14, 14.24]{Bad}. When $Y$ is not smooth, then we take a resolution
    $\pi\colon \tilde{Y}\to Y$. Then $\Supp(\pi^{*}\Exc(f))\subset \Exc(f\circ \pi)$, and thus the intersection matrix of $\Exc(f)$ is negative definite. Now, we can define the Mumford pullback by $f$ in a similar way to \cite[Chapter 14, 14.24]{Bad}.
    \item The Mumford pullback preserves the Iitaka dimension by the projection formula. In addition, the Mumford pullback preserves nefness by definition.
\end{itemize}
\end{rem}

In the rest of the paper, we simply refer to the Mumford pullback as pullback.

\subsection{Liftability of pairs to the ring of Witt vectors}
In this subsection, we recall the fundamental facts about liftability of pairs.
\begin{defn}\label{def:snc}
Let $T$ be a Noetherian scheme.
Let $X$ be a smooth scheme over $T$ of relative dimension $d$ and $B_1,\ldots, B_n$ irreducible closed subschemes. We say that $B\coloneqq \sum_{r=1}^{n} B_r$ is \textit{simple normal crossing over $T$} (\textit{snc over $T$}, for short) if the scheme-theoretic intersection $\bigcap_{r \in J} B_r$ is smooth over $T$ of relative dimension $d-|J|$ for any subset $J \subseteq \{1, \ldots, n\}$ such that $\bigcap_{r \in J} B_r\neq \emptyset$. When $T$ is a spectrum of an algebraically closed field, we simply say that $B$ is \textit{snc}. 
\end{defn}
\begin{rem}
We follow the notation of Definition \ref{def:snc} and suppose that $B$ is snc over $T$.
By \cite[Theorem 2.5.8]{MR2791606}, for any $x \in \bigcap_{r \in J} B_r$, there exist an open neighbourhood $U_x\subset X$ of $x$ and an \'etale morphism 
$\phi_x \colon U_x \to \mathbb A^d_T = T \times_{\Spec \Z} \Spec \Z[t_1, ..., t_d]$ such that $(\bigcap_{r \in J} B_r)|_{U_x} = V(\prod_{r\in J}\phi_{x}^{\#}(t_r))$. 
\end{rem}

\begin{defn}\label{d-liftable}
Let $X$ be a smooth projective variety over an algebraically closed field $k$ and $B$ an snc divisor on $X$. 
Let $B = \sum_{r=1}^n B_r$ be the irreducible decomposition.
Let $R$ be a Noetherian local ring with residue field $k$.
We say that a pair $(X,B)$ \textit{lifts to $R$} if there exist 
\begin{itemize}
    \item a scheme $\mathcal{X}$ smooth and projective over $R$ with a closed immersion $i\colon X\hookrightarrow \mathcal{X}$ and 
    \item irreducible closed subschemes $\mathcal{B}_1,\ldots, \mathcal{B}_n$ such that $\sum_{r=1}^{n}\mathcal{B}_{r}$ is snc over $R$
\end{itemize}
such that the induced morphism  $i\times_{R}k \colon X\to \mathcal{X}\times_{R} k$ is isomorphic and $(i\times_{R}k) (B_r)= \mathcal{B}_r\times_{R} k$ for every $1 \leq r\leq n$. 
\end{defn}

\begin{rem}\label{Remark:lifting as log smooth pairs}
In the setting of Definition \ref{d-liftable}, we further assume that $R$ is regular. In this case, if $\sum_{r=1}^{n}\mathcal{B}_{r}$ is flat over $R$, then $\sum_{r=1}^{n}\mathcal{B}_{r}$ is snc over $R$ as follows.

Since $\mathcal{B}_r$ is flat over $\Spec\,R$, this is smooth of relative dimension $d-1$ by \cite[Th\'eor\`eme 12.2.4 (iii)]{EGAIV3}. 
We fix a closed point of $x\in \bigcap_{r \in J} \mathcal{B}_r$.
Since $\mathcal{X}$ is regular, each $\mathcal{B}_r$ is Cartier, and thus we obtain 
\begin{align*}
&\dim\sO_{\mathcal{X},x}-|J|\leq \dim\,\sO_{\bigcap_{r \in J} \mathcal{B}_r, x}
\leq\dim\,\sO_{\bigcap_{r \in J} B_r, x}+\dim\,R\\
=&\dim\,\sO_{X,x}-|J|+\dim\,R
=\dim\sO_{\mathcal{X},x}-|J|
\end{align*}
and hence $\sO_{\bigcap_{r \in J} \mathcal{B}_r, x}$ is Cohen-Macaulay and \[\dim\,\sO_{\bigcap_{r \in J} \mathcal{B}_r, x}=\dim\,\sO_{\bigcap_{r \in J} B_r, x}+\dim\,R\]
holds.
Then, by \cite[Theorem 23.1]{Matsumura}, it follows that $\bigcap_{r \in J} \mathcal{B}_r\to \Spec\,R$ is flat at $x$. By \cite[Th\'eor\`eme 12.2.4 (iii)]{EGAIV3} again, we conclude that the closed subscheme $\bigcap_{r \in J} \mathcal{B}_r$ is smooth over $R$ and hence $\mathcal{B}$ is snc over $R$. 
\end{rem}

\begin{lem}\label{log lift of blow-up}
Let $X$ be a smooth projective surface and $B$ an snc divisor on $X$.
Let $\mathrm{Bl}_{x}\colon Y\to X$ be a blow-up at a closed point $x\in X$. Suppose that $(X, B)$ lifts to a complete regular local ring $R$.
Then $(Y, (\mathrm{Bl}_{x})_{*}^{-1}B+\Exc(\mathrm{Bl}_{x}))$ lifts to $R$.
\end{lem}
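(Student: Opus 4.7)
The plan is to construct the lift by blowing up the given log smooth lifting $(\mathcal{X}, \mathcal{D} = \sum_{i=1}^{r}\mathcal{D}_i)\to \Spec R$ of $(X, D)$ along a carefully chosen section reducing to $x$, and then verifying everything is log smooth over $R$ by a local calculation.

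First I would lift the center $x$ to a section of $\mathcal{X}\to\Spec R$ that respects the snc stratification of $\mathcal{D}$. Let $S\subset \mathcal{X}$ be the deepest closed stratum containing $x$: $S = \mathcal{X}$ if $x\notin\Supp D$, $S = \mathcal{D}_i$ if $x$ lies on a unique component $D_i$, and $S = \mathcal{D}_i\cap \mathcal{D}_j$ if $x$ lies on two. Because $(\mathcal{X}, \mathcal{D})$ is log smooth over $R$, the scheme $S$ is smooth over $R$ near $x$ (of relative dimension $2$, $1$, or $0$ respectively). Since $R$ is henselian, the standard section-lifting property for smooth morphisms yields a section $\sigma\colon \Spec R\to S\hookrightarrow \mathcal{X}$ with $\sigma(\mathrm{pt})=x$; set $\Sigma := \sigma(\Spec R)$.

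Next I would let $\pi\colon \mathcal{Y}\to \mathcal{X}$ be the blow-up along $\Sigma$ and identify its closed fiber and log structure by an explicit local calculation. Since $\Sigma$ lies in the deepest snc stratum through $x$, one may choose étale-local coordinates $u,v$ on $\mathcal{X}$ near $\Sigma$ so that $\Sigma = V(u,v)$ and the components of $\mathcal{D}$ through $x$ are among $\{V(u), V(v)\}$. Then $\mathcal{Y}$ is covered by the two standard affine charts $v=ut$ and $u=vs$, each smooth over $R$, so $\mathcal{Y}$ is flat over $R$; moreover, the same coordinate description identifies the closed fiber with the blow-up of $X$ at $x$, namely $Y$. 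Writing $\mathcal{E}$ for the exceptional divisor and $\mathcal{D}_i'$ for the strict transform of $\mathcal{D}_i$, in each chart $\mathcal{E}$ and the $\mathcal{D}_i'$ that appear are cut out by a single coordinate function, so they are smooth Cartier divisors over $R$, their pairwise intersections are smooth over $R$, and the triple intersection is empty. Thus $(\mathcal{Y}, \sum_i \mathcal{D}_i' + \mathcal{E})$ is log smooth over $\Spec R$, and its closed fiber is precisely $(Y, (\mathrm{Bl}_x)_*^{-1}D + \Exc(\mathrm{Bl}_x))$, giving the desired lift.

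The main obstacle is the first step: constructing a genuine section of the stratum $S\to\Spec R$ through $x$. This uses the henselian hypothesis essentially via the section-lifting property for smooth morphisms over a henselian base; without henselianity one would only obtain a formal section, which need not arise from a closed subscheme of $\mathcal{X}$ that can serve as the center of an $R$-defined blow-up.
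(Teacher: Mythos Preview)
Your proposal is correct and follows essentially the same approach as the paper: use the henselian hypothesis to lift $x$ to a section $\Sigma$ lying in the appropriate snc stratum (the paper phrases this as ``compatible with the snc structure'' and cites a general result from Fu's \emph{Etale cohomology theory} for the section-lifting over a henselian base), then blow up along $\Sigma$ and verify via \'etale-local coordinates that the result is log smooth over $R$ with the correct closed fiber. The paper is terser, outsourcing the local blow-up computation to \cite[Lemma~4.4]{Ard}, whereas you write out the two-chart check explicitly; but the argument is the same.
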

\begin{proof}
Let $(\mathcal{X}, \mathcal{B})$ be a lifting of $(X,B)$ to $R$.
Since $R$ is henselian, \cite[Propostion 2.8.13]{MR2791606} shows that there exists a lifting $\tilde{x}$ of $x$ to $R$, which is compatible with the snc structure in the sense of \cite[Definition 2.7]{ABL}.
By \cite[Theorem 2.5.8]{MR2791606},
there exists an open neighborhood $\mathcal{U}_{\tilde{x}}\subset \mathcal{X}$ of $\tilde{x}$ and an \'etale morphism $\tilde{\phi}\colon\mathcal{U}_{\tilde{x}}\to \Spec\,R[t_1,t_2]$ such that 
\begin{itemize}
    \item $\mathcal{B}_r\cap \mathcal{U}_{\tilde{x}}=V(\tilde{\phi}^{*}t_r)$ for each irreducible component $\mathcal{B}_r$ of $\mathcal{B}$ containing $\tilde{x}$ and
    \item $\tilde{x}=V(\tilde{\phi}^{*}t_1,\tilde{\phi}^{*}t_2)$.
\end{itemize}
We define an \'etale morphism $\phi\colon U_x\coloneqq \mathcal{U}_{\tilde{x}}\otimes_R k \to \Spec\,k[t_1,t_2]$ as $\phi\coloneqq\tilde{\phi}\otimes_R k$.
Then $x=V(\phi^{*}t_1, \phi^{*}t_2)$.
Now, an argument of after Claim of \cite[Lemma 4.4]{Ard} shows that $(\mathcal{Y}, (\mathrm{Bl}_{\tilde{x}})_{*}^{-1}\mathcal{B}+\Exc(\mathrm{Bl}_{\tilde{x}}))$ is a lifting of $(Y, (\mathrm{Bl}_{x})_{*}^{-1}B+\Exc(\mathrm{Bl}_{x}))$.
\end{proof}

\begin{lem}\label{every lift=some lift}
Let $X$ be a normal projective surface and $B$ a reduced divisor on $X$.
Suppose that there exists a log resolution $f\colon Y\to X$ of $(X, B)$ such that $H^2(Y, \sO_Y)=0$ and $(Y, f_{*}^{-1}B+\Exc(f))$ lifts to a complete regular local ring $R$.
Then, for every log resolution $g\colon Z\to X$ of $(X, B)$, the pair $(Z, g_{*}^{-1}B+\Exc(g))$ lifts to $R$.
\end{lem}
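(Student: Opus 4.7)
The plan is to compare the two log resolutions via a common refinement and then propagate the lifting through blow-ups (governed by Lemma \ref{log lift of blow-up}) and through blow-downs of $(-1)$-curves. By the strong factorization of birational maps between smooth projective surfaces, there exists a log resolution $h \colon W \to X$ of $(X, D)$ dominating both $f \colon Y \to X$ and $g \colon Z \to X$, with $p \colon W \to Y$ and $q \colon W \to Z$ each a composition of blow-ups at closed points. Applying Lemma \ref{log lift of blow-up} iteratively -- valid because the complete local ring $R$ is henselian -- produces a lifting of $(W, h_{*}^{-1}D + \Exc(h))$ to $R$ that extends the given lifting of $(Y, f_{*}^{-1}D + \Exc(f))$ via $p$.

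It remains to descend this lifting along $q$. Factor $q$ as a sequence $W = Z_n \to Z_{n-1} \to \cdots \to Z_0 = Z$, contracting one $(-1)$-curve $E_{j+1} \subset Z_{j+1}$ at each step in such a way that every intermediate pair remains log smooth. Because $H^2(-, \sO)$ is a birational invariant for smooth projective surfaces, $H^2(Z_j, \sO_{Z_j}) = H^2(Y, \sO_Y) = 0$ for every $j$. It therefore suffices to establish the following descent step: if $(W', D')$ is a log smooth projective surface pair with $H^2(W', \sO_{W'}) = 0$ that lifts to $R$ as $(\mathcal{W}', \mathcal{D}')$, and $\pi \colon W' \to W''$ contracts a $(-1)$-curve $E \subset D'$, then $(W'', \pi_{*}D')$ also lifts to $R$.

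For this step, I would first lift $E$ to a flat family $\mathcal{E} \subset \mathcal{W}'$ of $(-1)$-curves over $R$: the normal bundle $N_{E/W'} \cong \sO_{\PP^1}(-1)$ has vanishing $H^0$ and $H^1$, so deformations of $E$ inside $\mathcal{W}'$ are unobstructed and unique at each infinitesimal order, and Grothendieck's existence algebraizes the formal family over the complete local base. Next, I would lift a $\pi$-trivial ample line bundle $L = \pi^{*}\bar L$ (with $\bar L$ ample on $W''$) to $\mathcal{L}$ on $\mathcal{W}'$; the obstruction lies in $H^2(W', \sO_{W'}) = 0$. For sufficiently divisible $N$, the graded algebra $\bigoplus_{n \ge 0} \pi_{*}\mathcal{L}^{\otimes Nn}$ is cohomologically flat over $R$, and $\mathcal{W}'' \coloneqq \Proj_R \bigoplus_{n \ge 0} \pi_{*}\mathcal{L}^{\otimes Nn}$ furnishes the desired lift, with $\mathcal{W}' \to \mathcal{W}''$ contracting exactly $\mathcal{E}$; the pushforward of $\mathcal{D}' - \mathcal{E}$ supplies the lifted boundary.

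The main obstacle is the rigorous execution of this contraction step, in particular the cohomology-and-base-change arguments needed to identify $\Proj_R$ with the expected blow-down and to verify log smoothness of the result over $R$. A cleaner alternative is to appeal directly to Artin's contractibility criterion for flat families of $(-1)$-curves over complete local bases, which bypasses the explicit $\Proj_R$ construction. In either approach, the hypothesis $H^2(Y, \sO_Y) = 0$ enters essentially to guarantee that the line bundle controlling the contraction deforms together with $\mathcal{W}'$.
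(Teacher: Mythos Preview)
Your overall architecture is exactly the paper's: pass to a common log resolution $h\colon W\to X$ dominating both $Y$ and $Z$, lift $(W,h_*^{-1}D+\Exc(h))$ to $R$ by iterating Lemma~\ref{log lift of blow-up} along $W\to Y$, and then descend along $W\to Z$. The divergence is only in the descent step. The paper handles it in one line by citing \cite[Proposition~4.3~(1)]{AZ}, which already packages the blow-down of a lifted log smooth pair: since $W\to Z$ is a composition of point blow-ups, $(Z,g_*^{-1}D+\Exc(g))$ formally lifts to $R$, and then $H^2(Z,\sO_Z)=H^2(Y,\sO_Y)=0$ algebraizes the formal lift. Your explicit construction---deforming the $(-1)$-curve, lifting a $\pi$-trivial ample bundle, and taking $\Proj$ (or invoking Artin's criterion)---is a correct route and is essentially what sits behind the cited result, but it is more laborious and partly redundant: the $(-1)$-curve $E$ is already one of the components $\mathcal{D}'_i$ of the lifted boundary, so no separate Hilbert-scheme argument is needed to deform it. Your approach buys self-containment; the paper's buys brevity by outsourcing the contraction to \cite{AZ}, with $H^2=0$ entering only at the algebraization stage rather than for lifting a line bundle.
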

\begin{proof}
We take a log resolution $g\colon Z\to X$ of $(X, B)$ and show the liftability of $(Z, g_{*}^{-1}B+\Exc(g))$.
We can take a log resolution $h\colon W\to X$ of $(X, B)$ that factors through both $f$ and $g$.
Since $W\to Y$ is a composition of blow-ups at a smooth point, the pair $(W, h_{*}^{-1}B+\Exc(h))$ lifts to $R$ by Lemma \ref{log lift of blow-up}.
Since $W\to Z$ is also a composition of blow-ups at a smooth point, it follows from \cite[Proposition 4.3 (1)]{AZ} that $(Z, g_{*}^{-1}B+\Exc(g))$ formally lifts to $R$. By assumption, we have $H^2(Z, \sO_Z)=H^2(Y, \sO_Y)=0$, and hence $(Z, g_{*}^{-1}B+\Exc(g))$ lifts to $R$ as a scheme.
\end{proof}

\begin{thm}\label{log smooth lift criterion}
Let $X$ be a smooth projective surface over an algebraically closed field $k$ and $B$ an snc divisor on $X$.
Let $R$ be a Noetherian complete local ring with residue field $k$.
Suppose that $H^2(X, T_{X}(-\log\,B))=0$.
Then $(X ,B)$ lifts to $R$ as a formal scheme.
Moreover, if $H^2(X, \sO_X)=0$, then $(X,B)$ lifts to $R$ as a scheme.
\end{thm}
\begin{proof}
We refer to \cite[Theorem 2.3]{KN} for the proof.
\end{proof}

Hara \cite[Corollary 3.8]{Hara98} showed the logarithmic Akizuki-Nakano vanishing theorem for $W_2$-liftable pairs $(X, B)$.
In Theorem \ref{Hara's vanishing}, we slightly generalize this theorem to the vanishing for nef and big divisors when $\dim\,X=2$.

\begin{thm}[\textup{cf.~\cite[Corollary 3.8]{Hara98}}]\label{Hara's vanishing}
Let $X$ be a smooth projective surface over an algebraically closed field $k$ of characteristic $p>0$ and $B$ an snc divisor on $X$. 
Suppose that $(X, B)$ lifts to $W_2(k)$.
Let $D$ be a nef and big $\Q$-divisor on $X$ such that $\Supp(\lceil D\rceil-D)$ is contained in $B$. 
Then 
\[
H^{j}(X, \Omega_X^{i}(\log \, B)\otimes \sO_X(-\lceil D \rceil))=0
\]
for $i,j\in \Z_{\geq 0}$ such that $i+j<2$.
\end{thm}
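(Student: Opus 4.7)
The strategy is to extend Hara's argument in \cite[Corollary 3.8]{Hara98}, which handles the case of an ample $\Z$-divisor, in two respects: from ample to nef-and-big, and from a $\Z$-divisor to a $\Q$-divisor whose fractional part is supported in $D$. The case $(i,j)=(0,0)$ is immediate: since $\lceil B\rceil\geq B$ and $B$ is big, $\lceil B\rceil$ is also big, and hence $H^0(X,\sO_X(-\lceil B\rceil))=0$.

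For the remaining cases $(i,j)\in\{(1,0),(0,1)\}$, the first step is to reduce to the situation where $B$ is a $\Z$-divisor. Since $\Supp(\lceil B\rceil-B)\subseteq D$, I would construct a tame cyclic cover $\pi\colon Y\to X$ of some degree $m$ coprime to $p$ with $mB$ integral, ramified only along components of $D$. Using that $(X,D)$ lifts to $W_2(k)$ and that the cyclic cover is built from a line bundle together with a section, both of which lift, the pair $(Y,\pi^{-1}(D)_{\mathrm{red}})$ becomes a log smooth $W_2(k)$-liftable pair and $\pi^*B$ is a nef-and-big $\Z$-divisor on $Y$. By the standard decomposition $\pi_*\sO_Y=\bigoplus_{i=0}^{m-1}\mathcal{L}^{-i}$, the vanishing on $X$ can be extracted as the $\sO_X$-isotypic summand of the corresponding vanishing on $Y$.

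With this reduction in place, the case $(i,j)=(0,1)$ reduces to Mumford's classical vanishing $H^1(Y,\sO_Y(-\pi^*\lceil B\rceil))=0$, which holds for any nef-and-big divisor on a smooth projective surface in every characteristic. For the case $(i,j)=(1,0)$, I would apply the Deligne-Illusie-Kato decomposition $\tau_{<p}F_{*}\Omega_Y^\bullet(\log\pi^{-1}(D)_{\mathrm{red}})\simeq\bigoplus_{i<p}\Omega_Y^i(\log\pi^{-1}(D)_{\mathrm{red}})[-i]$ furnished by $W_2(k)$-liftability and follow Hara's argument: iterating the Frobenius pullback and applying Mumford vanishing on the Frobenius-enlarged twists of $\pi^*\lceil B\rceil$, one deduces the desired vanishing of $H^0(Y,\Omega_Y^1(\log\pi^{-1}(D)_{\mathrm{red}})\otimes\sO_Y(-\pi^*\lceil B\rceil))$.

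The main obstacle is the case where the denominators of $\lceil B\rceil-B$ are divisible by $p$, since in that situation no tame cyclic cover of the required degree exists. I expect to address this either by a small perturbation of $B$ within its numerical equivalence class (preserving nef-and-bigness by openness, while keeping $\lceil B\rceil$ unchanged since the relevant coefficients are not integers) to eliminate the $p$-denominators, or more systematically by pulling back through a suitable power of the absolute Frobenius, which also eliminates $p$-denominators and is compatible with the $W_2(k)$-lifting structure.
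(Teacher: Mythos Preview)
There is a genuine gap. Your claim that Mumford's classical vanishing $H^1(Y,\sO_Y(-N))=0$ ``holds for any nef-and-big divisor on a smooth projective surface in every characteristic'' is false: Raynaud's surfaces give smooth projective surfaces in characteristic $p$ carrying ample line bundles $L$ with $H^1(X,L^{-1})\neq 0$. Thus even after a cyclic-cover reduction to a nef and big $\Z$-divisor, the case $(i,j)=(0,1)$ cannot be settled by a characteristic-free argument; the $W_2$-lifting hypothesis is essential there too, and the passage from ample to nef-and-big still requires justification. Your perturbation fix for $p$-denominators also fails: take $X=\PP_k^2$, $D=H$ a line, and $B=\tfrac{1}{p}H$. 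Any $\Q$-divisor $B'=\sum b_i'C_i$ numerically equivalent to $B$ satisfies $\sum b_i'\deg(C_i)=\tfrac{1}{p}$, which is impossible if every $b_i'$ lies in $\Z_{(p)}$. Your Frobenius-pullback idea is closer to the mark, but it supplants the cyclic-cover reduction rather than repairing it, and you have not said how the nef-and-big (as opposed to ample) case is handled once you are in Hara's framework.

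The paper avoids cyclic covers entirely. Via Serre duality and Hara's argument (using the Deligne--Illusie decomposition, valid here since $p\geq\dim X=2$), one reduces to showing $H^j(X,\Omega_X^i(\log D)\otimes\sO_X(-D+\lceil p^eB\rceil))=0$ for $i+j>2$ and some $e\gg 0$. The extension from ample to nef-and-big is handled by a periodicity trick: choose $m,n$ with $p^m(p^n-1)B$ Cartier; then $\lceil p^{m+ln}B\rceil=\lceil p^mB\rceil+\bigl(\sum_{t=0}^{l-1}p^{nt}\bigr)p^m(p^n-1)B$, a fixed $\Z$-divisor plus an arbitrarily positive nef-and-big Cartier multiple. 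Tanaka's asymptotic Serre-type vanishing for such twists \cite[Proposition~2.3, Theorem~2.6]{Tan15} then gives the required vanishing for $l\gg 0$. This last step is precisely what your proposal is missing.
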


\begin{rem}
Langer \cite[Example 1]{Lan15} showed that Theorem \ref{Hara's vanishing} does not hold when $D$ is only big. In other words, the Bogomolov-Sommese vanishing theorem can fail on $W_2(k)$-liftable pairs.
\end{rem}

\begin{proof}
By the Serre duality and the essentially same argument as in \cite[Corollary 3.8]{Hara98}, we can reduce the assertion to 
\[
H^j(X, \Omega_X^i(\log \, B))\otimes \sO_X(-B+\lceil p^eD \rceil)=0
\]
for all $i+j>2$ and some $e>0$.
We remark that the assumption that $p>\dim\,X$ in \cite[Corollary 3.8]{Hara98} is relaxed to $p\geq \dim\,X$. Indeed, in the proof of \cite[Corollary 3.8]{Hara98}, the assumption that $p>\dim\,X$ is only used for the quasi-isomorphism
$\bigoplus_{i}\Omega_X^i(\log\, B)[-i]\cong F_{*}\Omega_X^{\bullet}(\log B)$, and this quasi-isomorphism holds even in $p=\dim\,X$ by \cite[10.19 Proposition]{EV}.

We take $m, n\in \Z_{>0}$ such that $p^m(p^n-1)D$ is Cartier.
Then we obtain 
\begin{align*}
&H^j(X, \Omega_X^i(\log \, B))\otimes \sO_Y(-B+\lceil p^{m+ln}D \rceil)\\
=&H^j(X, \Omega_X^i(\log \, B)\otimes\sO_Y(-B+\lceil p^mD \rceil+(\sum_{i=0}^{l-1}p^{ni})p^m(p^n-1)D)).
\end{align*}
When $j=2$, 
the last term vanishes for all sufficiently large $l\gg 0$ by \cite[Proposition 2.3]{Tan15}.
Moreover, when $(i, j)=(2, 1)$, the last term vanishes for all sufficiently large $l\gg 0$ by \cite[Theorem 2.6]{Tan15}.
Therefore, we obtain the desired vanishing.
\end{proof}

\begin{lem}\label{Lem:KVV}
Let $X$ be a normal projective surface over an algebraically closed field $k$ of positive characteristic and $D$ a nef and big $\Z$-divisor on $X$. Suppose that there exists a log resolution $\pi\colon Y\to X$ such that $(Y, \Exc(\pi))$ lifts to $W_2(k)$.
Then $H^i(X, \sO_X(K_X+D))=0$ for all $i>0$.
\end{lem}
\begin{proof}
By the Serre duality for Cohen-Macaulay sheaves (\cite[Theorem 5.71]{KM98}), it suffices to show that $H^i(X, \sO_X(-D))=0$ for all $i<2$. When $i=0$, the vanishing follows from the bigness of $D$. Thus we assume that $i=1$.
By the spectral sequence 
\[
E_2^{p,q}=H^{p}(X,R^{q}\pi_{*}\sO_{Y}(-\lceil \pi^{*}D\rceil ))\Rightarrow E^{p+q}=H^{p+q}(Y,\sO_{Y}(-\lceil \pi^{*}D \rceil)),
\]
we obtain an injective morphism \[
H^{1}(X,\pi_{*}\sO_{Y}(-\lceil \pi^{*}D\rceil))\hookrightarrow H^1(Y, \sO_{Y}(-\lceil \pi^{*}D\rceil)).\]
By the projection formula, we have 
$\pi_{*}\sO_{Y}(-\lceil \pi^{*}D\rceil )=\pi_{*}\sO_{Y}(\lfloor -\pi^{*}D\rfloor )=\sO_{X}(-D)$
and thus it suffices to show that $H^1(Y, \sO_{Y}(-\lceil\pi^{*}D\rceil))=0$. 
Since $\Supp(\lceil\pi^{*}D\rceil-\pi^{*}D)\subset\Exc(\pi)$ and $\pi^{*}D$ is nef and big (see Remark \ref{nefness and bigness}), we obtain the desired vanishing by Theorem \ref{Hara's vanishing}.
\end{proof}

\section{Klt Calabi-Yau surfaces}\label{sec:klt CY surfaces}
In this section, we prove the liftability of a log resolution of a klt Calabi-Yau surface in large characteristic (Propositions \ref{Prop : Lift of canonical CY} and \ref{Lift of strictly klt CY}). 
We also show that there exists a bound on the Gorenstein index for every klt Calabi-Yau surface over every algebraically closed field (Lemma \ref{max Gor index}). 

\begin{defn}
We fix a real number $\epsilon\in\R_{>0}$.
We say a pair $(X, B)$ is \textit{$\epsilon$-klt} if, for every proper birational morphism $f\colon Z\to X$ from a normal variety $Z$, all the coefficients of $f^{*}(K_X+B)-K_Z$ is less than $1-\epsilon$.
\end{defn}

\begin{defn}
We say that a normal projective variety $X$ is \textit{canonical (resp.~klt) Calabi-Yau} if $X$ has only canonical (resp.~klt) singularities and $K_X\equiv0$. Moreover, if $X$ is klt Calabi-Yau but not canonical Calabi-Yau, then we say that $X$ is \textit{strictly klt Calabi-Yau}.
    
We say a projective pair $(X,B)$ is \textit{log Calabi-Yau}
if $(X, B)$ is lc and $K_X+B\equiv 0$.
We say a normal projective variety $X$ is \textit{of Calabi-Yau type} if there exists an effective $\Q$-divisor $B$ such that $(X, B)$ is log Calabi-Yau. 
\end{defn}

First, we show the liftability of a log resolution of a canonical Calabi-Yau surface.

\begin{prop}\label{Prop : Lift of canonical CY}
Let $X$ be a canonical Calabi-Yau surface over an algebraically closed field $k$ of characteristic $p>19$.
Then, for every log resolution $f\colon Z\to X$ of $X$, the pair $(Z, \Exc(f))$ lifts to $W(k)$. 
\end{prop}
\begin{proof}
Let $\pi\colon Y\to X$ be the minimal resolution.
By Lemma \ref{log lift of blow-up}, it suffices to show the liftability of $(Y, E\coloneqq\Exc(\pi))$.
Since $K_Y=\pi^{*}K_X=0$, it follows that $Y$ is one of an abelian surface, a hyperelliptic surface, a K3 surface, or an Enriques surface.
If $Y$ is an abelian surface, then $Y=X$ and $Y$ lifts to $W(k)$ by \cite[Proposition 11.1]{Oort}.
Next, we assume that $Y$ is a hyperelliptic surface. In this case, $Y=X$ and $Y$ is a quotient of a fiber product $C_1\times C_2$ of elliptic curves by an action of some group scheme $G$.  
We recall that a smooth projective curve lifts to $W(k)$ with its automorphism if the degree of the automorphism is not divisible by $p$ (\cite[Theorem 1.5 and Remark 1.11]{Obus}).
Since $p\neq 2,3$, comparing with the list of actions of $G$ on $C_1\times C_2$ in \cite[List 10.27]{Bad}, we can take a $W(k)$-lifting $\mathcal{C}_i$ of $C_i$ and $\mathcal{G}$ of $G$ such that $\mathcal{G}$ acts on $\mathcal{C}_1\times \mathcal{C}_2$ compatibly with the action of $G$ on $C_1\times C_2$. Then $\mathcal{C}_1\times \mathcal{C}_2/\mathcal{G}$ gives a lifting of $Y$.

Next, we assume that $Y$ is a K3 surface or an Enriques surface.
We show that the determinant $d$ of the intersection matrix of $E$ is not divisible by $p$.
For the sake of contradiction, we assume that $d$ is divisible by $p$. 
Since the determinant of the intersection matrix of a rational double point of type $A_n$ (resp.~$D_n$, $E_6$, $E_7$, $E_8$) is equal to $(-1)^n(n+1)$ (resp.~$(-1)^n4,~3,-2,1$), it follows from the assumption of $p>19$ that $X$ has an $A_{np-1}$-singularity for some $n\in\Z_{>0}$.
Hence we have $\rho(Y)\geq np\geq 23$.
This is a contradiction because the Picard rank of a K3 surface is at most $22$ by \cite[Chapter 17, 2.4]{K3book} and that of an Enriques surface is at most $10$ by \cite[Section 3]{BMIII}.
Thus $d$ is not divisible by $p$ and \cite[Theorems 1.2 and 1.3]{Gra} shows that $\pi_{*}\Omega_Y=\Omega^{[1]}_{X}$.
Then we obtain
\begin{align*}
H^2(Y, T_Y(-\log\,E))\hookrightarrow H^2(X, T_X)\cong& H^0(X, \Omega_X^{[1]}\otimes \sO_X(K_X))\\=&H^0(Y, \Omega_Y\otimes \sO_Y(K_Y)).
\end{align*}
For the first injection, we refer to Remark \ref{Remark:tangent}.
We assume that $Y$ is a K3 surface. Then we have $H^0(Y, \Omega_Y\otimes \sO_Y(K_Y))=H^0(Y, \Omega_Y)=0$, and $(Y, E)$ formally lifts to $W(k)$ by Theorem \ref{log smooth lift criterion}. Moreover, $(Y, E)$ is algebraizable by \cite[Proposition 2.6]{Ito}.
Finally, We assume that $Y$ is an Enriques surface. Then we have an \'etale morphism $\tau \colon \tilde{Y}\to Y$ from a K3 surface $\tilde{Y}$ since $p\neq 2$.
Thus we obtain \[
H^0(Y, \Omega_Y\otimes \sO_Y(K_Y))\hookrightarrow H^0(\tilde{Y}, \Omega_{\tilde{Y}}\otimes \sO_{\tilde{Y}}(K_{\tilde{Y}}))=0.
\]
Moreover, since $p\neq 2$, we have $K_Y\neq 0$, and in particular, $H^2(Y, \sO_Y)\cong H^0(Y, \sO_Y(K_Y))= 0$. Therefore, the pair $(Y, E)$ lifts to $W(k)$ by Theorem \ref{log smooth lift criterion}.
\end{proof}

\begin{rem}\label{Rem : Lift of canonical CY}
In Proposition \ref{Prop : Lift of canonical CY}, we cannot drop the assumption $p>19$ (see Example \ref{Example : sharpness for Kodaira dim 0}). On the other hand, when the minimal resolution $Y$ is a K3 surface that is not supersingular, the pair $(Y,E)$ lifts to $W(k)$ even if $p\leq 19$ as follows. 

First, by \cite[Corollary 4.2]{LM18}, $Y$ itself lifts to $W(k)$.
Moreover, by \cite[Lemma 2.3 and Corollary 4.2]{LM18}, each irreducible component of $E$ lifts to $W(k)$. Then we obtain the desired liftability by Remark \ref{Remark:lifting as log smooth pairs}. 
\end{rem}

From now, we focus on a strictly klt Calabi-Yau surface.
We first prove that the Gorenstein index of a klt Calabi-Yau surface is bounded from above.

\begin{lem}\label{Cartier index divides det}
Let $X$ be a klt surface and $\pi\colon Y\to X$ a resolution.
Then the Cartier index of any $\Z$-divisor on $X$ divides the determinant of the intersection matrix of $\Exc(\pi)$.
\end{lem}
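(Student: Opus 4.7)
The plan is to localize the problem at each singular point of $X$, and there combine an explicit Mumford pullback calculation with a descent from the resolution $Y$. Since $\Exc(\pi)$ decomposes into connected components indexed by the singular points $x\in X$, the intersection matrix $M$ is block diagonal with blocks $M_x$, and $|\det M|=\prod_x|\det M_x|$. Moreover, the Cartier index of a $\Z$-divisor $D$ on $X$ is the least common multiple of its local Cartier indices at singular points, so it suffices to fix a singular point $x$ with exceptional components $E_1,\dots,E_n$ above it and show that the local Cartier index of $D$ at $x$ divides $d_x\coloneqq|\det M_x|$.

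For the local statement, I would write the Mumford pullback as $\pi^{*}D=\pi_{*}^{-1}D+\sum_i a_iE_i$ with $a_i\in\Q$. The defining conditions $(\pi^{*}D)\cdot E_j=0$ translate into the linear system $M_x(a_i)=-(\pi_{*}^{-1}D\cdot E_j)$, and Cramer's rule applied via $\mathrm{adj}(M_x)M_x=(\det M_x)I$ shows that $d_xa_i\in\Z$ for every $i$. Consequently
\[
L\coloneqq d_x\pi^{*}D=d_x\pi_{*}^{-1}D+\sum_i(d_xa_i)E_i
\]
has integer coefficients on the smooth surface $Y$, hence is a Cartier divisor on $Y$, satisfying $L\cdot E_j=0$ for every exceptional $E_j$ above $x$.

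The remaining step is to descend $L$ to a Cartier divisor on $X$. Klt surface singularities are rational, so $R^1\pi_{*}\sO_Y=0$ and the exceptional components above $x$ form a tree of smooth rational curves. For such a tree the multidegree map $\Pic(\hat Y_x)\to\prod_j\Pic(E_j)=\prod_j\Z$ is injective, and the numerical vanishing $L\cdot E_j=0$ therefore forces $L$ to be trivial on the formal neighborhood $\hat Y_x$ of the exceptional fiber. Combined with $R^1\pi_{*}\sO_Y=0$, the classical descent for rational surface singularities (Lipman, characteristic-free) produces a Cartier divisor $L'$ on $X$ with $\pi^{*}L'=L$; pushing forward yields $L'=\pi_{*}L=d_xD$, so $d_xD$ is Cartier at $x$, as required. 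The hard part will be this final descent, but it goes through here because klt surface singularities are rational in any characteristic and Lipman's arguments are valid over arbitrary algebraically closed ground fields.
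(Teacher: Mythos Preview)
Your proof is correct and follows essentially the same route as the paper. The paper also writes $\pi^{*}D=\pi_{*}^{-1}D+\sum d_iE_i$, observes (via the same Cramer-type reasoning you spell out) that $dd_i\in\Z$ so that $\pi^{*}(dD)$ is Cartier on $Y$, and then invokes \cite[Lemma~2.1]{CTW} for the descent you carry out by hand through Lipman's theory of rational surface singularities; your localization at each singular point is a harmless refinement of the same argument.
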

\begin{proof}
Let $d$ be the determinant of the intersection matrix of $\Exc(\pi)$.
We take a $\Z$-divisor $D$ on $X$ and write $\pi^{*}D=\pi_{*}^{-1}D+\sum d_iE_i$ for some $d_i\in \Q$. 
Then it follows that $dd_i\in \Z$ for each $i$, and in particular, $\pi^{*}dD$ is Cartier.
Now, we can conclude that $dD$ is Cartier by \cite[Lemma 2.1]{CTW}.
\end{proof}

\begin{lem}\label{Q-fac index of epsilon CY}
We fix a real number $\epsilon\in (0, \frac{1}{\sqrt{3}})$.
Then there exists $m\coloneqq m(\epsilon)\in \Z_{>0}$ with the following property.
For every $\epsilon$-klt surface $X$ of Calabi-Yau type over every algebraically closed field and every $\Z$-divisor $D$ on $X$, the divisor $mD$ is Cartier.
\end{lem}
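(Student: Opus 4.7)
The plan is to combine Lemma~\ref{Cartier index divides det} with the classification of $\epsilon$-klt surface singularities and the boundedness of $\epsilon$-klt log Calabi-Yau surfaces. Given an $\epsilon$-klt of Calabi-Yau type surface $X$ over some algebraically closed field, let $\pi\colon Y\to X$ be the minimal resolution and set $d(X) := |\det M(X)|$, where $M(X)$ is the intersection matrix of $\Exc(\pi)$. By Lemma~\ref{Cartier index divides det}, every $\Z$-divisor $D$ on $X$ has Cartier index dividing $d(X)$, so it suffices to show that the set of values $\{d(X)\}$, taken over all such $X$ over every algebraically closed field, is finite; then $m(\epsilon)$ can be chosen as its least common multiple.

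The matrix $M(X)$ is block-diagonal with one block $M_x$ per singular point $x \in \mathrm{Sing}(X)$, giving the factorization $d(X) = \prod_{x\in\mathrm{Sing}(X)} |\det M_x|$. Each $M_x$ is determined by the weighted dual graph of $(X, x)$, and since this germ is $\epsilon$-klt, the classification of $\epsilon$-klt surface singularities---which is characteristic-independent and produces only finitely many dual graphs for each fixed $\epsilon$---ensures that $|\det M_x|$ lies in a finite set depending only on $\epsilon$.

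To conclude finiteness of $\{d(X)\}$, it remains to bound $|\mathrm{Sing}(X)|$ uniformly in $\epsilon$. Since each singular point contributes at least one $\pi$-exceptional curve, $|\mathrm{Sing}(X)| \leq \rho(Y) - \rho(X) \leq \rho(Y)$. Writing $(X, B)$ for an $\epsilon$-klt log Calabi-Yau pair witnessing the Calabi-Yau type hypothesis, a uniform bound on $\rho(Y)$ would follow from the boundedness of $\epsilon$-klt log Calabi-Yau surfaces across all algebraically closed fields: in characteristic zero this is Alexeev's theorem, while in positive characteristic one would appeal to a Cascini-Tanaka-Witaszek-style boundedness result of the kind the introduction announces will be proved later in the paper.

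The main obstacle is this last step: producing a characteristic-uniform bound on $\rho(Y)$ for every $\epsilon$-klt of Calabi-Yau type surface requires a BAB-type statement valid in all characteristics, whereas the singularity-level finiteness in the second step is classical. The specific threshold $\epsilon < \tfrac{1}{\sqrt{3}}$ is presumably dictated by the numerology of the CTW-style boundedness rather than by the singularity classification.
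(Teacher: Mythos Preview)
Your overall strategy matches the paper's: reduce via Lemma~\ref{Cartier index divides det} to showing that only finitely many intersection matrices $M(X)$ can occur, then take $m$ as the product (or lcm) of their determinants. Your local step, bounding each block $M_x$ via the finiteness of $\epsilon$-klt surface singularity types, is correct and equivalent to the paper's observation that $-\tfrac{2}{\epsilon}\le E_i^2\le -2$ and $E_i\cdot E_j\in\{0,1\}$.

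The gap is in your global step. You correctly identify that one needs a characteristic-uniform bound on $\rho(Y)$, but you leave this unresolved and suggest invoking a ``CTW-style boundedness result of the kind the introduction announces will be proved later in the paper.'' That would be circular: the relevant later statements (e.g.\ Lemma~\ref{Lift of log CY MFS}) themselves \emph{use} Lemma~\ref{Q-fac index of epsilon CY} as input. More importantly, no such heavy machinery is needed. The paper simply cites \cite[Theorem~1.8]{AM}, which gives the explicit bound $\rho(Y)\le \tfrac{128}{\epsilon^5}$ for the minimal resolution of any $\epsilon$-klt of Calabi-Yau type surface. This Alexeev--Mori bound is proved by a purely numerical analysis of the intersection form and the discrepancy inequalities, and is therefore valid over every algebraically closed field; it is not a BAB-type moduli statement at all. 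Once you have this bound together with the entrywise bounds on $M(X)$, the matrix lives in a finite set and you are done.

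Finally, your guess about the threshold $\epsilon<\tfrac{1}{\sqrt{3}}$ is off: it is the hypothesis under which \cite[Lemma~1.2 and Theorem~1.8]{AM} are stated, not something coming from CTW numerology.
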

\begin{proof}
Let $\pi\colon Y\to X$ be the minimal resolution and $\Exc(\pi)\coloneqq \sum_i E_i$ the irreducible decomposition.
Then $Y$ is $\epsilon$-klt and of Calabi-Yau type.
By \cite[Lemma 1.2 and Theorem 1.8]{AM}, we have $-\frac{2}{\epsilon}\leq E_i^2\leq -2$ and $\rho(Y)\leq \frac{128}{\epsilon^5}$. In addition, we have $E_i\cdot E_j=0$ or $1$ for $i\neq j$ since $X$ is klt. 
Thus there are only finitely many possibilities for the intersection matrix of $\Exc(\pi)$. 
We take $m$ as a product of all possible determinants of the intersection matrices of $\Exc(\pi)$.
Now, Lemma \ref{Cartier index divides det} shows that $m$ is the desired integer.
\end{proof}

\begin{lem}\label{Boundedness of Vol of epsilon CY}
We fix a real number $\epsilon\in (0, \frac{1}{\sqrt{3}})$.
For every $\epsilon$-klt surface $X$ of Calabi-Yau type over every algebraically closed field, there are only finitely many possibilities for $K_X^2$.
\end{lem}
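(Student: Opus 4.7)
The plan is to bound $K_X^2$ both in absolute value and in denominator, and then deduce finiteness from the discreteness of its possible values. Let $\pi\colon Y\to X$ be the minimal resolution. Since $X$ is rational and $\epsilon$-klt of Calabi-Yau type, so is $Y$; hence by \cite{AM} (as used in the proof of Lemma \ref{Q-fac index of epsilon CY}) we have $\rho(Y)\le 128/\epsilon^5$ and $-2/\epsilon \le E_i^2 \le -2$ for every $\pi$-exceptional curve $E_i$. Moreover, each $E_i$ is a smooth rational curve (standard for klt surface singularities), so adjunction gives $K_Y \cdot E_i = -2 - E_i^2$.

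Next I would bound $K_Y^2$ via Noether's formula. For the smooth projective rational surface $Y$ one has $b_1(Y)=0$ and $b_2(Y) = \rho(Y)$ in any characteristic (both quantities behave identically under smooth blow-ups and agree on $\PP^2$). Hence $e(Y) = 2 + \rho(Y)$ and $\chi(\sO_Y) = 1$, giving $K_Y^2 = 10 - \rho(Y)$, which is bounded in terms of $\epsilon$. Writing $K_Y = \pi^*K_X + \sum d_i E_i$ with discrepancies $d_i \in (-1+\epsilon, 0]$ and using $\pi^*K_X\cdot E_i = 0$, a short calculation gives
\[
K_X^2 \;=\; K_Y \cdot \pi^*K_X \;=\; K_Y^2 - \sum d_i (K_Y\cdot E_i) \;=\; K_Y^2 + \sum d_i (2 + E_i^2).
\]
Each summand has absolute value at most $2/\epsilon - 2$, and the number of terms is at most $\rho(Y) \le 128/\epsilon^5$, so $|K_X^2|$ is bounded by an explicit constant $C(\epsilon)$.

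Finally, Lemma \ref{Q-fac index of epsilon CY} supplies an integer $m=m(\epsilon)$ such that $mK_X$ is Cartier, so $K_X^2 \in \tfrac{1}{m^2}\Z$. A bounded subset of the discrete set $\tfrac{1}{m^2}\Z$ is finite, completing the proof. The main point requiring care is the identification $b_2(Y) = \rho(Y)$ in positive characteristic; this reduces via the blow-up formula to the case $Y = \PP^2$, where it is immediate. Everything else is an elementary manipulation of the numerical data already supplied by Lemma \ref{Q-fac index of epsilon CY} and its proof.
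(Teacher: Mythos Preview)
Your proof is correct and follows essentially the same approach as the paper: both use the minimal resolution, the bounds $\rho(Y)\le 128/\epsilon^5$ and $-2/\epsilon\le E_i^2\le -2$ from \cite{AM}, and the formula $K_X^2 = K_Y^2 + \sum a_i(-E_i^2-2)$ (with $a_i=-d_i$). The only cosmetic differences are that the paper obtains the bound on $K_Y^2$ directly from the blow-up structure of rational surfaces rather than via Noether, and concludes finiteness by observing that each ingredient in the formula takes finitely many values (with $a_i\in\{\tfrac{1}{m},\dots,\tfrac{m-1}{m}\}$), whereas you argue ``bounded plus discrete in $\tfrac{1}{m^2}\Z$''; these are equivalent.
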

\begin{proof}
Let $\pi\colon Y\to X$ be the minimal resolution.
We can write $K_Y+\sum_i a_iE_i=\pi^{*}K_X$
for some $a_i\in \Q_{>0}$, where $E_i$ is a $\pi$-exceptional prime divisor. 
As in the proof of Lemma \ref{Q-fac index of epsilon CY}, we have $\rho(Y)\leq \frac{128}{\epsilon^5}$ and there are only finitely many possibilities for the intersection matrix of $\Exc(\pi)$. 
We fix a positive integer $m\coloneqq m(\epsilon)\in\Z_{>0}$ as in Lemma \ref{Q-fac index of epsilon CY}. 
Then we have $a_i\in\{\frac{1}{m},\cdots,\frac{m-1}{m}\}$ for each $i$.

If $Y$ is rational, then $Y$ is obtain from $\PP^2_k$ or a Hirzebruch surface by at most $(\lfloor\frac{128}{\epsilon^5}\rfloor-1)$-times blow-ups, and in particular, $K_Y^2\in \Z\cap (9-\lfloor\frac{128}{\epsilon^5}\rfloor,9)$.
If $Y$ is not rational, then $K_Y^2=0$ by \cite[Lemma 1.4]{AM}.
Now, we can conclude that there are only finitely many possibilities for 
\[
K_X^2=K_Y^2+\sum_i a_i(K_Y\cdot E_i)=K_Y^2+\sum_i a_i(-E_i^2-2) 
\] 
and obtain the assertion.
\end{proof}

\begin{lem}[\textup{cf.~\cite[Proposition 11.7]{Bir}}]\label{global ACC}
Let $\Lambda\subset [0,1]\cap \Q$ be a DCC set. Then there exists a finite subset $\Gamma\subset \Lambda$ with the following property:
for every projective morphism $X\to Z$ over every algebraically closed field and every $\Q$-divisor $B$ on $X$ satisfying 
\begin{itemize}
    \item $(X,B)$ is an lc surface,
    \item the coefficients of $B$ are in $\Lambda$,
    \item $K_X+B$ is numerically trivial over $Z$, and
    \item $\dim\,X>\dim\,Z$,
\end{itemize}
all the $\pi$-horizontal coefficients of $B$ are contained in $\Gamma$.
\end{lem}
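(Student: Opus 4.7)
The plan is to reduce the statement to the case of relative dimension $1$, then restrict to the generic fiber of $\pi$ and finally apply a combinatorial ACC statement for partitions of $2$ with parts in $\Lambda$.

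First, I would cut with general hyperplane sections to reduce to the case $\dim X-\dim Z=1$. Choosing a projective embedding $X\hookrightarrow\PP^N$, set $X'\coloneqq H_1\cap\cdots\cap H_r$ for $r\coloneqq\dim X-\dim Z-1$ very general hyperplanes. By a standard Bertini-type argument for lc pairs, which holds in positive characteristic, $(X',B|_{X'})$ is again lc, and $K_{X'}+B|_{X'}=(K_X+B)|_{X'}$ is still numerically trivial over $Z$. Since the $H_i$ are general, each $\pi$-horizontal prime component of $B$ restricts to a $(\pi|_{X'})$-horizontal prime component of $B|_{X'}$ with the same coefficient, so it suffices to prove the statement when $\dim X-\dim Z=1$.

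Next, I would restrict to the generic fiber $F\coloneqq X_\eta$, which is a projective curve over $K\coloneqq K(Z)$. After base change to $\overline{K}$, the pair $(F,B|_F)$ is lc, $K_F+B|_F\equiv 0$, and the coefficients of $B|_F$ are exactly the $\pi$-horizontal coefficients of $B$ that we need to control. An lc projective curve is nodal, and $K_F+B|_F\equiv 0$ with $B|_F$ effective forces $p_a(F)\in\{0,1\}$. If $p_a(F)=1$, then $\deg B|_F=0$ and $B$ has no $\pi$-horizontal components, so the conclusion is vacuous. Otherwise $F\cong\PP^1$ and $B|_F=\sum b_iP_i$ with $b_i\in\Lambda\cap[0,1]$ and $\sum b_i=2$.

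The remaining task is the combinatorial statement: the set of values $b\in\Lambda$ occurring as a coefficient in such a degree-$2$ partition is contained in a finite subset $\Gamma\subset\Lambda$. Since $\Lambda$ is DCC, it admits a positive minimum $\mu\coloneqq\min(\Lambda\cap\R_{>0})$, and each partition has at most $\lfloor 2/\mu\rfloor$ nonzero parts. Supposing for contradiction that some infinite sequence of values $b^{(n)}\in\Lambda$, converging to some $b^{\infty}\in\R$, each appears in a valid partition, I would pass to a subsequence stabilising both the number of parts $k$ and the position of the distinguished coefficient; then the remaining $k-1$ entries form tuples whose total is strictly decreasing as $n$ grows. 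Iterating the DCC property slot by slot forces every coordinate to be eventually constant, contradicting the choice of $b^{(n)}$. The main obstacle I expect is precisely this combinatorial DCC bookkeeping, which is standard (cf.~\cite[Proposition 11.7]{Bir}) but delicate; by contrast, the geometric reductions to relative dimension $1$ and to the generic fiber should be essentially routine.
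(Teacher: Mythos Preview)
Your Step~1 contains a fatal error: cutting $X$ by general hyperplanes $H_1,\ldots,H_r$ does \emph{not} preserve numerical triviality of the log canonical class. By adjunction,
\[
K_{X'}+B|_{X'}=(K_X+B+H_1+\cdots+H_r)|_{X'},
\]
which is the restriction of an ample class, not of a relatively trivial one. In the case that matters here (the statement, via the reference to \cite[Proposition~11.7]{Bir}, is implicitly for surfaces), the hard instance is $\dim X=2$, $\dim Z=0$: you would be cutting a log Calabi-Yau surface by a single hyperplane, and the resulting curve has $\deg(K_{X'}+B|_{X'})=H^2>0$, so your Step~3 never applies.

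The paper's proof (following Birkar) handles this case by an entirely different mechanism: one runs an MMP to reach a Mori fiber space, and when the base is a point one invokes boundedness of $\epsilon$-klt del Pezzo surfaces. This is exactly where the paper substitutes Lemmas~\ref{Q-fac index of epsilon CY} and~\ref{Boundedness of Vol of epsilon CY} for Alexeev's theorem in order to make Birkar's argument uniform over all algebraically closed fields. Your combinatorial Step~3 is correct and is precisely the curve case that appears when the Mori fiber space has one-dimensional base; but there is no shortcut from the del Pezzo case to the curve case via hyperplane sections.

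A secondary issue: in Step~2 you pass to the geometric generic fibre and base change to $\overline{K}$. In positive characteristic the generic fibre of a surface fibration need not be geometrically reduced or normal (e.g.\ quasi-elliptic fibrations in characteristic $2$ or $3$), so the assertion that $(F_{\overline K},B|_{F_{\overline K}})$ is an lc curve requires justification. Restricting instead to a general closed fibre avoids this, but does not repair the main gap above.
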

\begin{proof}
The assertion has been proved in \cite[Proposition 11.7]{Bir} when we fix the base field. We remark that the same proof works without fixing the base field.
We note that, in Step~4 of the proof of \cite[Proposition 11.7]{Bir}, we use \cite[Theorem 6.9]{Ale}, which requires us to fix the base field.
However, \cite[Theorem 6.9]{Ale} is applied to only show the boundedness of the Gorenstein index and the self-intersection number of the canonical divisor of an $\epsilon$-klt del Pezzo surface, which do not depend on the base field by Lemmas \ref{Q-fac index of epsilon CY} and \ref{Boundedness of Vol of epsilon CY}. 
\end{proof}

\begin{lem}\label{epsilon-klt=klt}
There exists a positive real number $\epsilon \in \R_{>0}$ such that every klt Calabi-Yau surface over every algebraically closed field is $\epsilon$-klt.
\end{lem}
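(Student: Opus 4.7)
The plan is to argue by contradiction. Suppose there exists a sequence $\{X_n\}_{n\in\N}$ of klt Calabi-Yau surfaces, possibly over different algebraically closed fields, with $\mathrm{mld}(X_n)\to 0$. Since canonical Calabi-Yau surfaces have $\mathrm{mld}=1$, each $X_n$ may be assumed strictly klt. First I would take the minimal resolution $f_n\colon Y_n\to X_n$ and write $K_{Y_n}+D_n\equiv 0$ with $D_n=\sum_i a_i^{(n)}E_i^{(n)}$ the unique effective $\Q$-divisor supported on $\Exc(f_n)$; here $a_i^{(n)}\in[0,1)$, and some $a_{i_n}^{(n)}=1-\mathrm{mld}(X_n)$ tends to $1$. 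Then $(Y_n,D_n)$ is a klt log Calabi-Yau pair.

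Next, since $-K_{Y_n}\equiv D_n$ is nonzero and effective, $K_{Y_n}$ is not pseudo-effective, so running the $K_{Y_n}$-MMP terminates at a Mori fiber space $\pi_n\colon V_n\to Z_n$. As this MMP is automatically $D_n$-trivial, the pushforward $(V_n,B_n)$ is still klt log Calabi-Yau, with coefficients contained in $\{a_i^{(n)}\}$. After passing to a subsequence, I would arrange that the component of $D_n$ achieving the maximum coefficient $a_{i_n}^{(n)}$ is not contracted by the MMP, so that $B_n$ retains a coefficient tending to $1$.

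The key step is to apply Lemma \ref{global ACC} to this family to obtain a contradiction, and the main obstacle is that the set $\{a_i^{(n)}\}_{i,n}$ does not obviously satisfy DCC. After a further subsequence making $\{a_{i_n}^{(n)}\}_n$ strictly increasing, the set $\Lambda:=\{a_{i_n}^{(n)}\}_n\cup\{0\}$ is DCC. The idea is then to replace $B_n$ by a smaller boundary $B_n'$ with coefficients in $\Lambda$, compensating by a nef $\Q$-divisor $N_n\ge 0$ with $K_{V_n}+B_n'+N_n\equiv 0$, and apply a generalized form of Lemma \ref{global ACC} for such pairs. Restricting to a general fiber of $\pi_n$ when $\dim Z_n=1$, or to $V_n$ itself when $\dim Z_n=0$, this forces $a_{i_n}^{(n)}$ into a finite subset of $\Lambda$, contradicting $a_{i_n}^{(n)}\to 1$. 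The hard part will be rigorously justifying this generalized-pair reduction; an alternative route is to handle the two cases $\dim Z_n\in\{0,1\}$ directly, using Lemmas \ref{Q-fac index of epsilon CY} and \ref{Boundedness of Vol of epsilon CY} to bound the possibilities for the Picard-rank-one klt del Pezzo surface $V_n$ in the former case, and restricting to a general $\PP^1$-fiber and invoking Lemma \ref{global ACC} in the latter.
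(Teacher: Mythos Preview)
Your overall strategy---contradiction via a sequence with discrepancies tending to zero, passage to the minimal resolution, and an appeal to global ACC (Lemma~\ref{global ACC})---matches the paper's. The gap is precisely the point you flag: the full coefficient set $\{a_i^{(n)}\}_{i,n}$ is not a DCC set, and your proposed workarounds do not close this. The claim that one can ``arrange'' by subsequence that the maximal-coefficient component survives the $K_{Y_n}$-MMP is unjustified: for a fixed $n$ either it survives or it does not, and on a smooth surface the MMP is essentially forced. The generalized-pair reduction requires a version of Lemma~\ref{global ACC} that the paper does not supply. The alternative route via Lemmas~\ref{Q-fac index of epsilon CY} and~\ref{Boundedness of Vol of epsilon CY} is circular: those lemmas take an $\epsilon$-klt hypothesis as input, which is exactly what you are trying to establish.

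The paper sidesteps the DCC problem by a different MMP. Writing $K_{Y_X}+\sum_i a_{X,i}E_{X,i}=\pi_X^*K_X$ with $a_{X,1}$ maximal, one runs a $(K_{Y_X}+a_{X,1}E_{X,1}+\sum_{i\geq 2}E_{X,i})$-MMP \emph{over $X$}. Relative to $X$ this divisor is $\equiv\sum_{i\geq 2}(1-a_{X,i})E_{X,i}$, an effective exceptional divisor with no $E_{X,1}$-component; hence $E_{X,1}$ is never the contracted curve, while the negativity lemma forces all $E_{X,i}$ for $i\geq 2$ to be contracted at the end. The output is a projective pair $(Y'_X,\,a_{X,1}E'_{X,1})$ with $K_{Y'_X}+a_{X,1}E'_{X,1}\equiv 0$ and a \emph{single} boundary coefficient $a_{X,1}$. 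For a sequence with $a_{X_m,1}$ strictly increasing, the coefficient set $\{a_{X_m,1}\}$ is trivially DCC, and Lemma~\ref{global ACC} (with $Z=\Spec k$) gives the contradiction immediately. This extraction trick is the missing idea in your proposal.
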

\begin{proof}
First, we extract an exceptional divisor with minimum log discrepancy.
We take a klt Calabi-Yau surface $X$ as in the lemma.
Let $\pi\colon Y\to X$ be the minimal resolution and write \[
K_{Y}+\sum_{i} a_{X,i}E_{i}=\pi^{*}K_X
\]
for some $a_{X, i}\in \Q_{>0}$. 
We may assume that $a_{X,1}\geq a_{X,i}$ for all $i$.
We run a $(K_Y+a_{X,1}E_{1}+\sum_{i\geq 2} E_{i})$-MMP over $X$ to obtain a birational contraction $\phi\colon Y\to Y'$.
Since $K_Y+a_{X,1}E_{1}+\sum_{i\geq 2} E_{i}\equiv_{X}\sum_{i\geq 2}(1-a_{X,i})E_{i}$, it follows that $\phi_{*}E_1\neq 0$ and $\sum_{i\geq 2}(1-a_{X,i})\phi_{*}E_{i}$ is nef over $X$. The negativity lemma shows that $\phi_{*}E_{i}=0$ for each $i\geq 2$ and hence
\[
K_{Y'}+a_{X,1}\phi_{*}E_{1}\equiv \phi_{*}(K_{Y}+\sum_{i} a_{X,i}E_{i})\equiv 0.
\]

Now, we prove the assertion.
For the sake of contradiction, we assume that there exists a sequence of klt Calabi-Yau surfaces $\{X_m\}_{m\in\Z_{>0}}$ such that $\{{a_{X_m,1}}\}_{m\in\Z_{>0}}$ is a strictly increase sequence.
Since $\{{a_{X_m,1}}|m\in\Z_{>0}\}$ is a DCC set, we can derive a contradiction by Lemma \ref{global ACC}. 
\end{proof}

\begin{lem}\label{max Gor index}
There exists a minimum positive integer $n\in\Z_{>0}$ such that, for every klt Calabi-Yau surface $X$ over every algebraically closed field, the Gorenstein index of $X$ is less than or equal to $n$.
\end{lem}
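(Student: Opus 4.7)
The plan is to combine the two preceding lemmas, \ref{epsilon-klt=klt} and \ref{Q-fac index of epsilon CY}. First I would invoke Lemma \ref{epsilon-klt=klt} to obtain a uniform $\epsilon \in \R_{>0}$ such that every klt Calabi-Yau surface over every algebraically closed field is $\epsilon$-klt. Shrinking $\epsilon$ if necessary, I may assume $\epsilon \in (0, 1/\sqrt{3})$ so that Lemma \ref{Q-fac index of epsilon CY} is applicable for this value.

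Next I would apply Lemma \ref{Q-fac index of epsilon CY} with this $\epsilon$ to obtain a positive integer $m = m(\epsilon)$ such that $mD$ is Cartier for every $\Z$-divisor $D$ on every $\epsilon$-klt of Calabi-Yau type surface. Any klt Calabi-Yau surface $X$ is of Calabi-Yau type with zero boundary (because $(X,0)$ is lc and $K_X \equiv 0$, hence klt log Calabi-Yau) and is $\epsilon$-klt by the previous step, so the hypothesis is satisfied; taking $D = K_X$ shows that $mK_X$ is Cartier. In particular, the Gorenstein index $\mathrm{ind}(X)$ of any klt Calabi-Yau surface $X$ divides $m$.

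Consequently, the set
\[
S \coloneqq \{\mathrm{ind}(X) \mid X \text{ is a klt Calabi-Yau surface over some algebraically closed field}\}
\]
is a nonempty subset of the finite set of positive divisors of $m$, and therefore admits a maximum $n_0$. Setting $n \coloneqq n_0$ does the job: for every klt Calabi-Yau surface $X$, the integer $n' \coloneqq \mathrm{ind}(X)$ satisfies $n' \leq n$ and $n'K_X$ is Cartier; conversely, $n_0$ is attained by some $X_0$, and for this $X_0$ every positive integer $n'$ with $n'K_{X_0}$ Cartier is a positive multiple of $\mathrm{ind}(X_0) = n_0$, so no strictly smaller value of $n$ can fulfill the condition.

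The substantive work has already been carried out in the earlier results, principally Lemma \ref{global ACC} (the uniform ACC/boundedness input which remains valid as the base field ranges over all algebraically closed fields) together with Lemma \ref{Q-fac index of epsilon CY} (which reduces to a finite enumeration of possible dual graphs of $\epsilon$-klt surface singularities). The present lemma is thus essentially a packaging step, and I do not foresee any genuine further obstacle.
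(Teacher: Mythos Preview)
Your proposal is correct and follows exactly the paper's approach: the paper's proof is the single sentence ``The assertion follows from Lemmas \ref{Q-fac index of epsilon CY} and \ref{epsilon-klt=klt},'' and you have simply unpacked this by first invoking Lemma \ref{epsilon-klt=klt} to get a uniform $\epsilon$, then Lemma \ref{Q-fac index of epsilon CY} to bound the Cartier index of $K_X$. Your additional paragraph verifying that the resulting bound is attained (so that the \emph{minimum} such $n$ exists) is a detail the paper leaves implicit but which is indeed immediate.
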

\begin{proof}
The assertion follows from Lemmas \ref{Q-fac index of epsilon CY} and \ref{epsilon-klt=klt}.
\end{proof}

\begin{rem}\label{Rem:max Gor index}
There exists a klt Calabi-Yau surface over $\C$ whose Gorenstein index is $19$ by \cite[Theorem C (a)]{Bla}. Thus we have $n\geq 19$ in Lemma \ref{max Gor index}.
Moreover, \cite[Theorem C (a)]{Bla} also shows that we can take $n=21$ when the base field of $X$ is an algebraically closed fields of characteristic zero.
\end{rem}

\begin{lem}\label{Cartier index}
Let $X$ be a strictly klt Calabi-Yau surface and $n$ the Gorenstein index of $X$.
Then $n$ is a minimum positive integer such that $nK_X=0$.
\end{lem}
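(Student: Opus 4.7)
The plan is to pass to the minimal resolution $\pi\colon Y\to X$, show that $Y$ is rational, and then deduce the linear equivalence $nK_X\sim 0$ by pushing down. One direction is trivial: if $mK_X\sim 0$ then $mK_X$ is Cartier, so $n\mid m$. It therefore suffices to prove $nK_X\sim 0$, equivalently, that the numerically trivial Cartier sheaf $\sO_X(nK_X)$ is trivial.

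Write $\pi^*K_X=K_Y+B$ with $B=\sum a_iE_i$ supported on $\Exc(\pi)$. Since $\pi$ is a minimal resolution of klt singularities, each $E_i$ is a smooth rational curve with $E_i^2\leq -2$, whence $K_Y\cdot E_i\geq 0$ by adjunction. These inequalities, together with the negative-definiteness of the intersection matrix of $\Exc(\pi)$, force each $a_i\geq 0$; and since $X$ is strictly klt, some $a_i>0$, so $B\neq 0$. From $K_X\equiv 0$ we get $K_Y\equiv -B$, and hence $h^0(Y,mK_Y)=0$ for every $m\geq 1$: otherwise $mK_Y\sim D\geq 0$ would make $D+mB$ an effective nonzero divisor numerically equivalent to zero on the smooth projective surface $Y$, which is impossible. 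Thus $\kappa(K_Y)=-\infty$.

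The main obstacle is to deduce that $Y$ is rational. Running a $K_Y$-MMP on the smooth surface $Y$ contracts $(-1)$-curves and terminates at a minimal model $Y'$ which, by the classification of surfaces with Kodaira dimension $-\infty$ (valid in every characteristic), is either $\PP^2$ or a $\PP^1$-bundle over a smooth curve $C$. We rule out the case $g(C)\geq 1$. In that case every rational curve on $Y'$ lies in a fibre of $Y'\to C$, since $\PP^1$ admits no non-constant morphism to $C$. Hence the pushforward $\mu_{*}B$ (where $\mu\colon Y\to Y'$ denotes the MMP) is effective, supported on rational curves, and therefore vertical, so $\mu_{*}B\cdot F=0$ for a general fibre $F$. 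But $\mu_{*}B\equiv -K_{Y'}$ and $-K_{Y'}\cdot F=2$ by adjunction, a contradiction. Therefore $Y$ is rational.

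On the smooth rational surface $Y$, $\operatorname{Pic}(Y)$ is torsion-free, so every numerically trivial line bundle is trivial. Since $nK_X$ is Cartier, $\pi^*(nK_X)\sim nK_Y+nB$ is a numerically trivial Cartier divisor, hence linearly equivalent to zero. This gives $\pi^*\sO_X(nK_X)\cong\sO_Y$; applying $\pi_{*}$ together with the projection formula and $\pi_{*}\sO_Y=\sO_X$ yields $\sO_X(nK_X)\cong\sO_X$, i.e.~$nK_X\sim 0$, as required.
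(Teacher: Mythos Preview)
Your proof is correct and follows essentially the same route as the paper: pass to the minimal resolution $Y$, show $Y$ is rational, conclude that the numerically trivial Cartier divisor $\pi^*(nK_X)$ is linearly trivial, and push forward. The only difference is that the paper outsources the rationality of $Y$ to \cite[Lemma 1.4]{AM} (and invokes abundance to phrase the other direction), whereas you supply a short self-contained argument via the $K_Y$-MMP and the observation that $\mu_*B$ would be vertical over a base curve of genus $\geq 1$.
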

\begin{proof}
By the abundance theorem (\cite[Theorem 1.2]{Tan12}), we can take a minimum positive integer $l$ such that $lK_X=0$.
By the definition, we have $n \leq l$. We show that $l \leq n$. Let $\pi\colon Y \to X$ be the minimal resolution of $X$.
Then we have $nK_Y+E= \pi^{*}nK_X\equiv 0$ for some effective Cartier divisor $E$. Since $X$ is strictly klt Calabi-Yau, it follows from the proof of \cite[Lemma 1.4]{AM} that $Y$ is a rational surface. Thus numerically trivial Cartier divisors on $Y$ are linearly trivial, and in particular, $nK_Y+E=0$.
Now we obtain $nK_X=\pi_{*}(nK_Y+E)=0$ and hence $l\leq n$.  
\end{proof}

Lemmas \ref{max Gor index} and \ref{Cartier index} show that a global cyclic cover associated to the canonical divisor of a strictly klt Calabi-Yau surface is \'etale in codimension one in large characteristic.

Finally, we prove the liftability of a log resolution of a strictly klt Calabi-Yau surface in large characteristic. 

\begin{defn}\label{MFS}
Let $(X, B)$ be a pair and $f \colon X \to Z$ a projective surjective morphism to a normal variety $Z$.
We say $f \colon X \to Z$ is a $(K_X+B)$-\emph{Mori fiber space} if
\begin{itemize}
    \item $-(K_X+B)$ is $f$-ample,
    \item $f_{*}\sO_X=\sO_Z$ and $\dim \, X>\dim \, Z$, and
    \item the relative Picard rank $\rho(X/Z)=1$.
\end{itemize}
\end{defn}

\begin{lem}\label{Lift of log CY MFS}
We fix a finite set $I\subset [0,1)\cap \Q$ and a positive real number $\epsilon\in (0, \frac{1}{\sqrt{3}})$.
There exists a positive integer $p(I, \epsilon)\in\Z_{>0}$ with the following property.
Let $(X, B)$ be an $\epsilon$-klt log Calabi-Yau surface over an algebraically closed field $k$ of characteristic bigger than $p(I, \epsilon)$. Suppose that $X$ admits a $K_X$-Mori fiber space structure $f\colon X\to Z$ and all the coefficients of $B$ are contained in $I$. Then, for every log resolution $g\colon W\to X$ of $(X,B)$, the pair $(W, g_{*}^{-1}(\Supp(B))+\Exc(g))$ lifts to $W(k)$.
\end{lem}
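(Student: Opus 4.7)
The plan is to realise the pairs $(X, \Supp D)$ satisfying the hypotheses as fibres of a bounded family defined over $\Spec \Z$, to obtain an arithmetic lift of this family in sufficiently large characteristic, and to use this to lift a specific log resolution to $W(k)$; the passage to an arbitrary log resolution then follows from Lemma \ref{every lift=some lift} together with the rationality of $X$.

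First I would establish boundedness of the pairs $(X, \Supp D)$. Since $X$ is rational and admits a $K_X$-Mori fibre space $f \colon X \to Z$, either $Z$ is a point, in which case $X$ is a Picard rank one $\epsilon$-klt del Pezzo surface, or $Z \cong \PP^1$, in which case $f$ is a conic bundle. In either case, Lemmas \ref{Q-fac index of epsilon CY} and \ref{Boundedness of Vol of epsilon CY} provide uniform bounds on the Gorenstein index of $X$, on the Picard rank and intersection matrix of the minimal resolution of $X$, and (in the del Pezzo case) on $K_X^2$. Combining $K_X + D \equiv 0$ with the hypothesis that the coefficients of $D$ lie in the finite set $I$ bounds the intersection numbers of the components of $D$ against $K_X$; Lemma \ref{global ACC} applied with $\Lambda = I$ further controls any $f$-horizontal coefficients. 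Together these bounds place $(X, \Supp D)$ in a bounded family in the sense of \cite{CTW}, and combining with a uniform log resolution algorithm (running through the bounded list of singularity types of $X$) yields a bounded family of log-resolved pairs $(W, \Exc(g) \cup g_{*}^{-1}\Supp D)$ as well.

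Next, I would spread out this bounded family to a scheme $T$ of finite type over $\Spec \Z$ carrying a flat family of log-resolved pairs. Taking $p(I,\epsilon)$ larger than the finitely many bad primes appearing in $T$, the $k$-point of $T$ corresponding to a fixed log resolution $(W_0, \Exc(g_0) \cup (g_0)_{*}^{-1}\Supp D)$ of $(X,D)$ lies on an irreducible component of $T$ that dominates $\Spec \Z$ and, after passing to a smooth chart around this point, admits a $W(k)$-point reducing to it. This yields a log smooth $W(k)$-lift of $(W_0, \Exc(g_0) \cup (g_0)_{*}^{-1}\Supp D)$. Since $X$ is rational, every log resolution $W$ of $(X,D)$ satisfies $H^2(W, \sO_W) = 0$, and so Lemma \ref{every lift=some lift} upgrades the $W(k)$-liftability from $g_0$ to any log resolution $g\colon W \to X$, yielding the desired $W(k)$-lift of $(W, \Exc(g) \cup g_{*}^{-1}\Supp D)$.

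The main obstacle is the passage from pointwise boundedness in Step~1 to an arithmetic family over $\Spec \Z$ with enough smoothness at the given $k$-point to produce a $W(k)$-lift (as opposed to a lift over a possibly ramified DVR), and the compatibility of this spreading out with the log resolution data. The conic bundle case is the most delicate, since horizontal and vertical components of $D$ must be controlled simultaneously; it is precisely here that the finiteness of the coefficient set $I$ — stronger than the mere DCC condition appearing in Lemma \ref{global ACC} — plays the essential role.
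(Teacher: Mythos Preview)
Your outline follows essentially the same strategy as the paper: establish boundedness of the pairs $(X,\Supp D)$, invoke the Cascini--Tanaka--Witaszek spreading-out argument over $\Spec\Z$ to lift in large characteristic, and then use Lemma~\ref{every lift=some lift} together with $H^2(W,\sO_W)=0$ (rationality) to pass from one log resolution to all. Two points where the paper is more concrete than your sketch: first, the boundedness step is carried out by explicitly producing a very ample Cartier divisor $H_X$ with only finitely many possible values of $h^0(H_X)$, $H_X^2$, $H_X\cdot K_X$, $H_X\cdot D$, $K_X\cdot D$, $D^2$ --- in the conic-bundle case this comes from showing that $-K_X+(\lceil 2/\epsilon\rceil-1)F$ is ample with controlled numerics, and then applying Witaszek's effective very-ampleness bound --- whereas you only gesture at Lemmas~\ref{Q-fac index of epsilon CY} and~\ref{Boundedness of Vol of epsilon CY}; second, the obstacle you flag at the end (getting a $W(k)$-lift rather than merely a lift over a ramified DVR) is exactly what \cite[Proposition~2.5]{ABL} provides, and the paper cites this. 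Your appeal to Lemma~\ref{global ACC} is superfluous: since $I$ is already finite by hypothesis, the horizontal coefficients are automatically controlled, and this is precisely why the statement assumes a finite $I$ rather than a DCC set.
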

\begin{proof}
By \cite[Proposition 2.5]{ABL} and Lemma \ref{every lift=some lift}, it suffices to show the following liftability:
there exists a log resolution $g\colon W\to X$ of $(X, B)$ such that the pair $(W, g_{*}^{-1}(\Supp(B))+\Exc(g))$ lifts to characteristic zero over a smooth base in the sense of \cite[Definition 2.15]{CTW}.
When $\dim\,Z=0$, by replacing $B$ with $\frac{1}{2}B$, we can assume that $(X, B)$ is an $\epsilon$-klt log del Pezzo surface, and the assertion follows from \cite[Proposition 3.2]{CTW}. Therefore, we may assume that $\dim\,Z=1$.

We show the following claim.
\begin{cl}
There exists a flat family $(\mathcal{X}, \mathcal{B})\to T$ to a reduced quasi-projective scheme $T$ over $\Spec\,\Z$ such that every log Calabi-Yau surface $(X, B)$ over every algebraically closed field of characteristic bigger than five satisfying
\begin{itemize}
    \item $(X,B)$ is $\epsilon$-klt, 
    \item $X$ has a $K_X$-Mori fiber structure $f\colon X\to Z$ to a curve $Z$, and
    \item all the coefficients of $B$ are contained in $I$,
\end{itemize} 
is a geometric fiber of $(\mathcal{X}, \mathcal{B})\to T$.
\end{cl}
\begin{clproof}
As in the proof of \cite[Lemma 3.1]{CTW}, it suffices to show the following:
there exists a positive integer $m\in\Z_{>0}$ not depending on $X$ and a very ample divisor $H_X$ on $X$ such that 
\begin{itemize}
    \item $mB$ is Cartier and 
    \item there are only finitely many possibilities for $\dim_{k} H^0(X, \sO_X(H_X))$, $H_X^2$, $H_X\cdot K_X$, $H_X\cdot B$, $K_X\cdot B$, and $B^2$ 
\end{itemize} 
for every log Calabi-Yau surface $(X,B)$ as in the claim.
We take a positive integer $m=m(\epsilon)$ as in Lemma \ref{Q-fac index of epsilon CY}.
Since all the coefficients of $B$ are contained in a finite set $I$, we can assume that $mB$ is Cartier, and thus the first assertion holds.

We show the latter assertion.
Together with $B\equiv -K_X$ and Lemma \ref{Boundedness of Vol of epsilon CY}, it suffices to check the values of $\dim_{k} H^0(X, \sO_X(H_X)), H_X^2$, and  $H_X\cdot K_X$.
We first show that $A_X\coloneqq -K_X+(\lceil \frac{2}{\epsilon}\rceil-1)F$ is an ample Cartier divisor on $X$ such that there are only finitely many possibilities for $A_X^2, A_X\cdot K_X$, where $F$ is a fiber of $X\to Z$.
Let $C$ be an irreducible curve whose numerical class spans an extremal ray of $\overline{NE}(X)$ that is not spanned by the numerical class of $F$.
If $C^2\geq 0$, then we have 
\[
(-K_{X}+(\lceil \frac{2}{\epsilon}\rceil-1)F)\cdot C=(B+(\lceil \frac{2}{\epsilon}\rceil-1)F)\cdot C
\geq \lceil \frac{2}{\epsilon}\rceil-1>0,
\]
and thus $-K_X+(\lceil \frac{2}{\epsilon}\rceil-1)F$ is ample by Kleiman's ampleness criterion.
We next assume that $C^{2}<0$.
Let $\pi\colon Y\to X$ be the minimal resolution. Then $Y$ is $\epsilon$-klt and of Calabi-Yau type, and thus \cite[Lemma 1.2]{AM} shows that $-\frac{2}{\epsilon}\leq (\pi_{*}^{-1}C)^2$. In particular, $-\frac{2}{\epsilon}\leq C^2$.
Now, we have 
\begin{align*}
(-K_{X}+(\lceil \frac{2}{\epsilon}\rceil-1)F)\cdot C
=(B+(\lceil\frac{2}{\epsilon}\rceil-1)F)\cdot C
>& (1-\epsilon)C^2+\lceil \frac{2}{\epsilon}\rceil-1\\
\geq&-\frac{2}{\epsilon}+2+\lceil\frac{2}{\epsilon}\rceil-1>0,
\end{align*}
and hence $-K_X+(\lceil\frac{2}{\epsilon}\rceil-1)F$ is ample.
Together with $F^2=0$, $K_{X}\cdot F=-2$, and Lemma \ref{Boundedness of Vol of epsilon CY}, we can see that $A_X$ is the desired ample Cartier divisor.  

Now, by \cite[Theorem 1.2]{Wit}, it follows that $13mK_{X}+45mA_X$ is very ample. 
Moreover, we can see that $(13m-3)K_X+(45m-14)A_X$ is nef and hence $H^i(X, \sO_{X}(13mK_{X}+45mA_X))=0$ for all $i>0$ by \cite[Proposition 6.5]{Wit}. 
We set $H_X\coloneqq 13mK_{X}+45mA_X$.
Then there are only finitely many possibilities for $H_X^2$ and $H_X\cdot K_X$.
Moreover, by the Riemann-Roch theorem, we have 
\begin{align*}
&\dim_{k} H^0(X, \sO_X(H_X))=\mathcal{X}(\sO_{X}(H_X))
                 =\mathcal{X}(\sO_{W}(f^{*}H_X)\\
                 =&\frac{(f^{*}H_X)^2}{2}+\frac{f^{*}H_X\cdot (-K_{W})}{2}+1
                 =\frac{(H_X)^2}{2}+\frac{H_X\cdot (-K_{X})}{2}+1,
\end{align*}
where $f\colon W\to X$ is a resolution and we used the fact that $X$ has only rational singularities for the second equality.
Therefore, there are only finitely many possibilities for $\dim_{k} H^0(X, \sO_X(H_X))$, and we finish the proof of the claim.
\end{clproof}

Now, by the above claim and the proof of \cite[Proposition 3.2]{CTW}, we can find the desired positive integer $p(I,\epsilon)$.
\end{proof}

\begin{prop}\label{Lift of strictly klt CY}
There exists a positive integer $p_0$ with the following property. 
Let $X$ be a strictly klt Calabi-Yau surface over an algebraically closed field of characteristic $p>p_0$.
Then, for every log resolution $g \colon W\to X$, the pair $(W, \Exc(g))$ lifts to $W(k)$.
\end{prop}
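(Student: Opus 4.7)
The plan is to reduce to Lemma~\ref{Lift of log CY MFS} via a $K_Y$-MMP on the minimal resolution of $X$, and then transfer the resulting lift back to $X$. Let $\pi\colon Y\to X$ be the minimal resolution; since $X$ is strictly klt, the argument in Lemma~\ref{Cartier index} shows $Y$ is a smooth rational surface. Writing $K_Y+B=\pi^{*}K_X$, Lemmas~\ref{Cartier index} and~\ref{max Gor index} give that $K_X$ is linearly trivial of bounded order $n\le n_0$, so $nB$ is a $\Z$-divisor and the coefficients of $B$ lie in the finite set $I:=\bigcup_{n\le n_0}\{0,1/n,\ldots,(n-1)/n\}$, depending only on the uniform bound $n_0$. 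Lemma~\ref{epsilon-klt=klt} supplies a uniform $\epsilon>0$ such that $(Y,B)$ is $\epsilon$-klt.

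Since $Y$ is smooth rational, $K_Y$ is not pseudo-effective and the $K_Y$-MMP consists of Castelnuovo contractions, terminating in a birational morphism $\phi\colon Y\to Y'$ onto $Y'\in\{\PP_{k}^{2},\FF_n\}$ admitting a $K_{Y'}$-Mori fiber space $\psi\colon Y'\to Z$. Each step is crepant for $K_Y+B$ (because $K_Y+B\equiv 0$), so $(Y',B':=\phi_{*}B)$ is again $\epsilon$-klt log Calabi-Yau with coefficients in $I$. Applying Lemma~\ref{Lift of log CY MFS} to $(Y',B')$ yields a threshold $p(I,\epsilon)$ such that, for $p>p(I,\epsilon)$, every log resolution of $(Y',B')$ lifts to $W(k)$.

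To transfer the lift back to $X$, I would choose a composition of blowups $h\colon W\to Y$ so that $g:=\pi\circ h\colon W\to X$ is a log resolution of $X$ and the composition $W\to Y\to Y'$ is simultaneously a log resolution of $(Y',B')$. The previous step then gives a lift of $(W,D_1)$ to $W(k)$, where $D_1:=\Exc(W\to Y')\cup(W\to Y')_{*}^{-1}(\Supp B')$, whereas the target is $(W,D_2)$ with $D_2:=\Exc(g)$. A component-by-component analysis shows that $D_1$ and $D_2$ differ only by (i) strict transforms on $W$ of $(-2)$-curves of $\Exc(\pi)$ with coefficient zero in $B$ that survive on $Y'$ (missing from $D_1$) and (ii) strict transforms of non-$\pi$-exceptional curves of $Y$ which are $\phi$-contracted (extra in $D_1$). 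Since $Y'\in\{\PP_{k}^{2},\FF_n\}$ contains at most one $(-2)$-curve---only when $Y'=\FF_{2}$, where it is the negative section---case (i) can be eliminated by an elementary transformation of the MMP, while the surplus components in (ii) can simply be dropped from the lift. Finally, because $W$ is rational, $H^2(W,\sO_W)=0$, so Lemma~\ref{every lift=some lift} promotes the lifting of this one log resolution to every log resolution of $X$. Setting $p_0:=p(I,\epsilon)$ finishes the argument.

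The main obstacle is the divisor reconciliation in the last step: because $X$ carries no boundary, the Du Val singularities of $X$ contribute components to $\Exc(g)$ whose coefficients in $B$ vanish and which are therefore invisible to the MMP. Bridging the mismatch between $D_1$ and $D_2$---either by arranging the MMP to avoid $\FF_{2}$, or by a direct deformation-theoretic lifting of the individual smooth rational $(-2)$-curves on $W$ using $H^2(W,\sO_W)=0$---constitutes the crux of the transfer.
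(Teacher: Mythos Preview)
Your overall strategy---reduce to Lemma~\ref{Lift of log CY MFS} via an MMP and then transfer back---matches the paper's, but the divisor reconciliation you flag as ``the main obstacle'' is a genuine gap, and your proposed fixes do not close it. A component $E\subset\Exc(\pi)$ over a Du~Val point of $X$ has coefficient $0$ in $B$; if it survives on $Y'$ it lies outside $\Supp(B')$ and hence outside $D_1$, regardless of its self-intersection on $Y'$. Your appeal to ``$Y'$ contains at most one $(-2)$-curve'' presumes that a surviving $E$ remains a $(-2)$-curve on $Y'$, but blowdowns along the MMP can raise $(\phi_*E)^2$ without putting $\phi_*E$ into $\Supp(B')$; avoiding $\FF_2$ therefore does not force such curves into $D_1$. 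Your alternative---lifting the missing curves by hand---runs into $h^1(W,\sO_W(C))=1$ for a $(-2)$-curve $C$ on a rational surface (Riemann--Roch gives $\chi=0$, $h^0=1$, $h^2=h^0(K_W-C)=0$), so the section cutting out $C$ need not lift to a given lift of $\sO_W(C)$, and one would still have to arrange simple normal crossings with the rest of $D_1$.

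The paper sidesteps the reconciliation entirely by choosing $Y$ differently: instead of the full minimal resolution, it takes an extraction $f\colon Y\to X$ of a \emph{single} prime exceptional divisor $E_1$ with strictly positive coefficient $a_1>0$ (such a divisor exists precisely because $X$ is strictly klt; the extraction is produced as in the proof of Lemma~\ref{epsilon-klt=klt}). Now $\Exc(f)=E_1=\Supp(a_1E_1)$, so every $f$-exceptional curve is already in the boundary. One runs a $K_Y$-MMP to a klt Mori fiber space $Y'$ (not necessarily smooth), applies Lemma~\ref{Lift of log CY MFS} to the pair $(Y',a_1E'_1)$, and chooses a log resolution $\mu\colon Z\to Y'$ factoring through $Y$; the containment $\Exc(Z\to X)\subset\Exc(\mu)\cup\mu_*^{-1}E'_1$ is then automatic, because the strict transform of $E_1$ is either $\mu$-exceptional or equal to $\mu_*^{-1}E'_1$, and every other $\Exc(Z\to X)$-component is already $h$-exceptional for $h\colon Z\to Y$. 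The Du~Val points of $X$ persist as singularities of $Y$, so their exceptional curves appear only on $Z$, where they lie in $\Exc(h)\subset\Exc(\mu)$ and cause no mismatch.
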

\begin{proof}
By Lemma \ref{epsilon-klt=klt}, there exists a positive real number $\epsilon\in (0, \frac{1}{\sqrt{3}})$ such that every klt Calabi-Yau surface is $\epsilon$-klt. 
We take $m=m(\epsilon)$ as in Lemma \ref{Q-fac index of epsilon CY} and define a finite set $I\coloneqq \{\frac{1}{m},\cdots,\frac{m-1}{m}\}$.
We take $p_0\coloneqq p(I,\epsilon)$ as in Lemma \ref{Lift of log CY MFS}.

Let $X$ be a strictly klt Calabi-Yau surface over an algebraically closed field of characteristic $p>p_0$.
As in Lemma \ref{epsilon-klt=klt}, we can take an extraction $f \colon Y\to X$ of an exceptional prime divisor $E_1$ such that $a_1\coloneqq\mathrm{coeff}_{E_1}(f^{*}K_X-K_{Y})\in I$.
Since $K_Y\equiv -a_1E_1$ is not pseudo-effective, we can run a $K_{Y}$-MMP to obtain a birational contraction $\phi\colon Y\to Y'$ and a $K_{Y'}$-Mori fiber space $Y'$.
Since $K_Y+a_1E_1\equiv 0$, the negativity lemma shows that $K_Y+a_1E_1=\phi^{*}(K_{Y'}+a_1E'_1)$ and hence $(Y', a_1E'_1)$ is $\epsilon$-klt and log Calabi-Yau, where $E'_1\coloneqq \phi_{*}E_1$. 
Then, by Lemma \ref{Lift of log CY MFS} and the definition of $p_0$, we can take a log resolution $\mu\colon Z\to Y'$ of $(Y', a_1E'_1)$ that factors through $\phi$ and $(Z, \mu_{*}^{-1}E'_1+\Exc(\mu))$ lifts to $W(k)$.
We now have the following diagram:
\[
\xymatrix{
 (Z, \mu_{*}^{-1}E'_1+\Exc(\mu))\ar[rd]_\mu\ar[r]_-h & (Y, E_1) \ar[r]_{f}\ar[d]^{\phi}& X \\
  &   (Y', E'_1)     &. \\
}
\]
Since $\Exc(f\circ h)\subset \mu_{*}^{-1}E'_1+\Exc(\mu)$, the pair $(Z, \Exc(f\circ h))$ lifts to $W(k)$, and the assertion holds by Lemma \ref{every lift=some lift}.
\end{proof}

\section{The Bogomolov-Sommese vanishing theorem}\label{sec:BSV}
\subsection{An extension type theorem for lc surfaces}
In this subsection, we show an extension type theorem for lc surfaces (Proposition \ref{ext thm}), which plays an essential role in the proof of Theorem \ref{BSV, Intro}.

\begin{lem}[\textup{cf.~\cite[Lemma 2.2]{Kaw1}}]\label{push}
Let $f\colon Y\to X$ be a projective birational morphism of normal surfaces. 
Let $B_Y$ be a reduced $\Z$-divisor and $D_Y$ a $\Z$-divisor on $Y$.
We set $B\coloneqq f_{*}B_Y$ and $D\coloneqq f_{*}D_Y$.
Then the following hold.
\begin{enumerate}
    \item[\textup{(1)}] The natural restriction morphism \[f_{*}(\Omega_{Y}^{[1]}(\log\,B_Y)\otimes \sO_{Y}(-D_Y))^{**}\to (\Omega_X^{[1]}(\log\, B)\otimes \sO_X(-D))^{**}
    \]
    is injective.
    \item[\textup{(2)}] Suppose that $X$ and $Y$ are projective. Then $\kappa(Y, D_Y)\leq \kappa(X, D)$ holds. 
    \end{enumerate}
\end{lem}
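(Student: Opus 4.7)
The strategy for both parts is to work on the largest open subset $U \subseteq X$ over which $f$ is an isomorphism and $(X, D_X)$ is snc, and to exploit that $X \setminus U$ consists of finitely many closed points (since $X$ is a normal surface and $f$ is birational). Setting $V \coloneqq f^{-1}(U)$, the isomorphism $f|_V \colon V \to U$ identifies $D_Y|_V$ with $D_X|_U$ and $B_Y|_V$ with $B_X|_U$, and on $U$ both reflexive sheaves appearing in (1) agree with the locally free sheaf $\Omega_X^1(\log D_X) \otimes \sO_X(-B_X)|_U$ (as $B_X$ is Cartier on the smooth open $U$). Note that $V$ is an open dense subset of $Y$, but $Y \setminus V$ need not have codimension two.

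For (1), set $G \coloneqq (\Omega_Y^{[1]}(\log D_Y) \otimes \sO_Y(-B_Y))^{**}$ and $H \coloneqq (\Omega_X^{[1]}(\log D_X) \otimes \sO_X(-B_X))^{**}$. I would define the natural restriction morphism over $U$ as the tautological isomorphism above, and extend it to $X$ as follows: for every open $W \subseteq X$, $H$ is reflexive so $H(W) = H(W \cap U)$, while $f_*G$ is torsion-free (being the pushforward of a reflexive sheaf along a birational morphism of integral schemes), so $f_*G(W) = G(f^{-1}(W)) \hookrightarrow G(f^{-1}(W \cap U)) \cong H(W \cap U) = H(W)$. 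The composition is the natural morphism on sections, and injectivity is immediate from the injection in the middle.

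For (2), fix $m \in \Z_{>0}$. Since $\sO_X(mB_X)$ is reflexive and $X \setminus U$ has codimension two, Hartogs gives $H^0(U, \sO_X(mB_X)|_U) = H^0(X, \sO_X(mB_X))$; restricting a section of $\sO_Y(mB_Y)$ to $V$ and transporting it via $(f|_V)^{-1}$ thus produces a $k$-linear injection $\iota_m \colon H^0(Y, \sO_Y(mB_Y)) \hookrightarrow H^0(X, \sO_X(mB_X))$ (injectivity uses that $V$ is dense in $Y$ and $\sO_Y(mB_Y)$ is torsion-free). Now choose $m$ realizing $\kappa(Y, B_Y) = \dim \overline{\phi_m^Y(Y)}$, fix a basis $s_0, \ldots, s_N$ of $H^0(Y, mB_Y)$, and extend $\iota_m(s_0), \ldots, \iota_m(s_N)$ to a basis $\tilde s_0, \ldots, \tilde s_M$ of $H^0(X, mB_X)$. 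The linear projection $\pi \colon \PP(H^0(X, mB_X)^*) \dashrightarrow \PP(H^0(Y, mB_Y)^*)$ onto the first $N+1$ coordinates then satisfies $\pi \circ \phi_m^X|_U = \phi_m^Y \circ (f|_V)^{-1}$ on the locus where both sides are defined, so comparing Zariski closures of images yields $\dim \overline{\phi_m^X(X)} \geq \dim \overline{\phi_m^Y(Y)} = \kappa(Y, B_Y)$, and hence $\kappa(X, B_X) \geq \kappa(Y, B_Y)$. The only place that warrants care is the equality of rational maps on $U$, which follows tautologically from the basis extension, so I do not foresee a genuine obstacle in either part.
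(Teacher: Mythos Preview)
Your proof is correct and follows essentially the same approach as the paper's. For (1), the paper phrases the argument as ``$f_*G$ is torsion-free, hence injects into its double dual, which is $H$'', while you unpack this section-by-section via $f_*G(W)\hookrightarrow G(f^{-1}(W\cap U))\cong H(W\cap U)=H(W)$; these are the same idea. For (2), the paper simply records the injection $H^0(Y,\sO_Y(mB_Y))\hookrightarrow H^0(X,\sO_X(mB_X))$ and asserts that (2) follows, whereas you spell out the compatibility of the rational maps $\phi_m^Y$ and $\phi_m^X$ under a linear projection --- this is precisely the content the paper leaves implicit, and your verification is the standard way to justify that step from the image-dimension definition of $\kappa$.
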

\begin{proof}
The same proof as \cite[Lemma 2.2]{Kaw1} works. Note that (1) is local on $X$.
\end{proof}
\begin{rem}\label{Remark:tangent}
In the setting of Lemma \ref{push} (2), 
we have
\begin{align*}
H^2(Y, T_Y(-\log\, B_Y))\cong&\mathrm{Hom}_{\sO_Y}(T_Y(-\log\, B_Y), \sO_Y(K_Y))\\
                      \cong &H^0(Y, (\Omega_Y^{[1]}(\log\,B_Y)\otimes \sO_Y(K_Y))^{**})
\end{align*}
by the Serre duality.
Then, by Lemma \ref{push} (1), we obtain an injective morphism
\[
H^2(Y, T_Y(-\log\, B_Y))\hookrightarrow H^2(X, T_X(-\log\, B)).
\]
We will use this fact in Section \ref{sec : Lift of a surface pair}.
\end{rem}

\begin{defn}\label{Definition:dlt blow-up}
Let $X$ be a normal surface and $B$ a $\Q$-divisor with coefficients in $[0,1]$.
We say that a morphism $h\colon W\to X$ is a \textit{dlt blow-up of $(X, B)$} if 
\begin{enumerate}
    \item[\textup{(1)}] $h$ is a projective birational morphism,
    \item[\textup{(2)}] $(W, h^{-1}_{*}B+\Exc(h))$ is dlt, and
    \item[\textup{(3)}] $K_W+h^{-1}_{*}B+\Exc(h)+F=h^{*}(K_X+B)$ for some effective $\Q$-divisor $F$. 
    \end{enumerate}
\end{defn}

\begin{lem}\label{Lemma:dlt blow-up}
Let $X$ be a normal surface and $B$ is a $\Q$-divisor with coefficients in $[0,1]$. Then the following hold.
\begin{enumerate}
    \item[\textup{(1)}] Any log resolution $\pi\colon Y\to X$ of $(X, B)$ decomposes into a birational projective morphism $Y\to W$ and a dlt blow-up $W\to X$. 
    \item[\textup{(2)}] $F=0$ if and only if $(X, B)$ is lc.
\end{enumerate}
\end{lem}
\begin{proof}
We refer to \cite[Theorem 4.7 and Remark 4.8]{Tanaka(exc)} for the proof.
Note that, using the Mumford pullback, \cite[Remark 4.8 (1)]{Tanaka(exc)} holds without the assumption that $K_X+B$ is $\R$-Cartier. 
\end{proof}

\begin{defn}
Let $(X, B)$ be a dlt pair over an algebraically closed field of characteristic $p>0$ such that $B$ is reduced.
We say that $(X, B)$ is \textit{tamely dlt} if the Cartier index of $K_X+B$ is not divisible by $p$.
\end{defn}

\begin{defn}\label{def:tame decomp}
Let $(X,B)$ be a pair over an algebraically closed field of positive characteristic such that $B$ is reduced.
Let $\pi\colon Y\to X$ be a log resolution of $(X,B)$. 
We say $\pi\colon Y\to X$ admits a \textit{tame decomposition} if there exists a decomposition 
\[
\pi\colon Y_0\coloneqq Y\overset{f_0}{\to} Y_1 \to \cdots \overset{f_{m-1}}{\to} Y_m\coloneqq X
\]
of $\pi$ such that 
\begin{itemize}
    \item $(Y_i, B_{Y_i})$ is tamely dlt and
    \item $-(K_{Y_i}+B_{Y_i})$ is $f_i$-nef, 
\end{itemize}
for all $i\in\{0,\dots,m-1\}$, where $B_{Y_0}=B_Y\coloneqq f_{*}^{-1}B+\Exc(\pi)$ and $B_{Y_{i}}$ is the pushforward of $B_Y$ to $Y_i$.
We remark that a tame resolution \cite[Definition 7.1]{Gra} admits a tame decomposition.
\end{defn}

\begin{lem}[cf.~\textup{\cite[7.B. Proof of Theorem 1.2]{Gra}}]\label{Lemma:tame decomposition}
Let $(X, B)$ be an lc surface over an algebraically closed field of characteristic $p>5$ and $\pi\colon Y\to X$ a log resolution of $(X,B)$.
Then $\pi$ admits a tame decomposition.
\end{lem}
\begin{proof}
We follow the notation of Definition \ref{def:tame decomp}.
We note that the minimal resolution of lc surface singularities with reduced boundary are classified as in (7.8.1)--(7.8.7) in \cite[7.B. Proof of Theorem 1.2]{Gra}.
By the dual graph of them (\cite[Figure 2--9]{Gra}), we can see that every $(-1)$-curve $F$ on $Y$ intersects at most two components of $\Supp(B_Y)$.
Then \[(K_Y+B_Y)\cdot F=(K_Y+F)\cdot F + (B_Y-F)\cdot F\leq 0,\] and by taking $f_0$ as the contraction of all $(-1)$-curves on $Y$, we can assume that $\pi\colon Y\to X$ is the minimal resolution.

For the cases of (7.8.1)--(7.8.4), (7.8.6) with non-zero boundary, and (7.8.7) in \cite[7.B. Proof of Theorem 1.2]{Gra}, it has been already proved that $\pi$ is a tame resolution, and in particular, admits a tame decomposition. We use here $p>5$ essentially.
For the case of (7.8.5), $-(K_Y+B_Y)$ is $\pi$-nef since the dual graph of the $\pi$-exceptional divisor is a chain, and thus $\pi$ itself is a tame decomposition.
Finally, we discuss the case (7.8.6) with zero boundary. Let $f_0 \colon Y\to Y_1$ be the contraction of two $(-2)$-curves $G_1$ and $G_2$ which intersect the fork.
Then $(K_Y+B_Y)\cdot G_i=-1$ and $Y_1$ is tamely dlt since $p\neq 2$.
Next, let $f_1\colon Y_1\to X$ be the contraction of all the remaining $\pi$-exceptional divisors, whose dual graph is a chain.
Then we can see that $f_{1}\circ f_{0}$ is a tame decomposition of $\pi$.
 \end{proof}

\begin{prop}[An extension type theorem for lc surfaces]\label{ext thm}
Let $(X, B)$ be an lc surface pair over an algebraically closed field of characteristic $p>5$ and $D$ a $\Z$-divisor on $X$. 
Let $f\colon Y\to X$ be a projective birational morphism from a normal surface $Y$ and $B_Y\coloneqq f_{*}^{-1}B+\Exc(f)$.
Then the natural restriction morphism 
\[
\Phi\colon
 f_{*}(\Omega_{Y}^{[1]}(\log\,\lfloor B_Y\rfloor)\otimes \sO_{Y}(-\lceil f^{*}D \rceil))^{**}\to (\Omega_X^{[1]}(\log\,\lfloor B\rfloor)\otimes \sO_X(-D))^{**}
\]
is isomorphic. 
\end{prop}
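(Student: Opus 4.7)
The plan is to prove surjectivity of $\Phi$, since its injectivity is provided by Lemma \ref{push}(1). The target is reflexive by construction, while the source, being $f_*$ of a reflexive sheaf on $Y$, need not be reflexive on $X$, so the claim has real content (in particular it asserts the source is already reflexive). Because $f(\Exc(f))$ is finite, the question of surjectivity localizes to codimension-one points of $X$, and above all to generic points of prime components of $\lfloor D\rfloor$, where the lc hypothesis ensures snc behavior. I would first reduce to the case where $f$ is a log resolution of $(X, D\cup \Supp B)$ by factoring $f$ through a common log resolution $h\colon Z\to Y$ and using functoriality of $f_*$: surjectivity of $\Phi$ for $fh\colon Z\to X$ implies it for $f$. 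Moreover, Lemma \ref{Lemma:dlt blow-up}(1) interposes a dlt blow-up $g\colon V\to X$, and Lemma \ref{Lemma:dlt blow-up}(2) together with the lc hypothesis gives $K_V+D_V=g^{*}(K_X+D)$, so $(V,D_V)$ is lc.

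The proof then assembles two ingredients. The first is Graf's extension theorem \cite[Theorems 1.2 and 1.3]{Gra}, valid for lc surface pairs in characteristic $p>5$, which produces the untwisted surjection $f_{*}\Omega_Y^{[1]}(\log \lfloor D_Y\rfloor)\twoheadrightarrow \Omega_X^{[1]}(\log\lfloor D\rfloor)$. The second is a rounding argument for the twist $\sO_X(-B)$: a local section of $(\Omega_X^{[1]}(\log\lfloor D\rfloor)\otimes\sO_X(-B))^{**}$ is a rational log form $\alpha$ satisfying $\Div(\alpha)+\lfloor D\rfloor-B\geq 0$ (modulo log-pole accounting along $\lfloor D\rfloor$). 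Pulling this inequality back to $Y$ yields $\Div(f^{*}\alpha)+\lfloor D_Y\rfloor-f^{*}B\geq 0$ as $\Q$-divisors; since $\Div(f^{*}\alpha)$ has integer coefficients while $f^{*}B$ is typically fractional along exceptional divisors, integrality sharpens this to $\Div(f^{*}\alpha)+\lfloor D_Y\rfloor-\lceil f^{*}B\rceil\geq 0$, which is precisely the condition for $f^{*}\alpha$ to define a section of the source sheaf.

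The main obstacle is that sections of the reflexive tensor product $(\Omega_X^{[1]}(\log\lfloor D\rfloor)\otimes\sO_X(-B))^{**}$ do not in general factor globally as simple tensors of a log form and a twist, so Graf's extension and the rounding argument cannot be applied separately to independent factors. I would handle this by working uniformly with the "rational log form" interpretation above, encoding a section through a single divisorial inequality and translating both ingredients into statements about pullback and integrality of divisors. The substitution of $\lceil f^{*}B\rceil$ for the classical index-one cover of $B$ is the essential new device: integrality of pulled-back divisors automatically absorbs fractional discrepancies along exceptional components, circumventing the characteristic-$p$ obstruction to étaleness in codimension one for index-one covers when $p$ divides the Cartier index of $B$.
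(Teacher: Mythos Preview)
Your framing of a section as a rational log $1$-form $\alpha$ with $v_\eta(\alpha)\geq v_\eta(B)$ at every codimension-one point $\eta$ of $X$ is fine, and the final rounding step is correct once you know $\Div_Y(\alpha)\geq f^{*}B$. The gap is precisely that inequality along the exceptional divisors: Graf's extension theorem only gives $v_E(\alpha)\geq 0$ for each exceptional prime $E$, not $v_E(\alpha)\geq a_E$ where $a_E$ is the coefficient of $E$ in the Mumford pullback $f^{*}B$. The order of vanishing of a section of the rank-two sheaf $\Omega^{[1]}_Y(\log D_Y)$ along $E$ is not determined by the orders along prime divisors on $X$, so there is nothing to ``pull back''. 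If $B$ were Cartier you could divide by a local equation $t$ and apply Graf to $\alpha/t$, recovering $v_E(\alpha)\geq v_E(t)=a_E$; but the proposition is designed exactly for the case where $B$ is not Cartier and $p$ may divide its index, so this trick is unavailable. Your proposal thus assumes the hardest step.

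The paper's proof supplies the missing mechanism. One first reduces to the situation where $(Y,D_Y)$ is \emph{tamely} dlt and $-(K_Y+D_Y)$ is $f$-nef, by factoring $f$ as in \cite[7.B]{Gra} (this is where $p>5$ enters). In that situation, Graf yields an injection $\sO_Y(B_Y)\hookrightarrow\Omega^{[1]}_Y(\log D_Y)$ with $f_*B_Y=B$, but a priori only $B_Y-f^{*}B$ exceptional. If this difference is not effective, the negativity lemma produces an exceptional component $E_1$ with $B_Y\cdot E_1>0$; one then uses Graf's residue and restriction sequences \cite[Theorem~1.4]{Gra} for $\Omega^{[1]}_Y(\log D_Y)$ and its symmetric powers, together with $B_Y\cdot E_1>0$ and $(K_Y+D_Y)\cdot E_1\leq 0$, to factor the injection through $\sO_Y(B_Y+E_1)$. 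Iterating forces $B_Y\geq f^{*}B$, hence $B_Y\geq\lceil f^{*}B\rceil$. The $f$-nefness of $-(K_Y+D_Y)$ is used essentially here, which is why the decomposition into such steps is needed; your outline omits both ingredients.
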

\begin{rem}\label{Remark:EXT}
Proposition \ref{ext thm} is equivalent to saying that 
\[
f_{*}(\Omega_{Y}^{[1]}(\log\,\lfloor B_Y\rfloor)\otimes \sO_{Y}(-\lceil f^{*}D \rceil))^{**}
\]
is reflexive.
\end{rem}
\begin{rem}\label{Remark:EXT2}
If we take $D=0$ in the proposition, then this is nothing but Graf's logarithmic extension theorem (\cite[Theorem 1.2]{Gra}).
Let us see why it is necessary to generalize Graf's logarithmic extension theorem as Proposition \ref{ext thm} to prove Theorem \ref{BSV, Intro}.

First, we consider the case when the base field is an algebraically closed field of characteristic zero and follow the notation of Theorem \ref{BSVoverC}. Let $\pi\colon Y\to X$ be a log resolution and $B_Y\coloneqq \pi^{-1}_{*}B+\Exc(\pi)$.
Suppose that there exists a $\Z$-divisor $D$ and an injective morphism $\sO_X(D)\hookrightarrow \Omega^{[i]}_X(\log\,B)$. For simplicity, we assume that $D$ is $\Q$-Cartier.
Then, by applying the logarithmic extension theorem in characteristic zero \cite[Theorem 1.5]{GKKP}, we can construct a $\Z$-divisor $D_Y$ on $Y$ such that there exists an injective morphism $\sO_Y(D_Y)\hookrightarrow \Omega^{[i]}_Y(\log\,B_Y)$ and $\kappa(X, D)=\kappa(Y, D_Y)$. This means the Bogomolov-Sommese vanishing theorem can be reduced to the case of log smooth pairs by the logarithmic extension theorem (see \cite[7.C. Proof of Theorem 7.2.]{GKKP} for the detailed argument).
In the construction of $D_Y$, we use the fact that an index one cover of $D$ is \'etale in codimension. However, when we work in characteristic $p>0$ and the Cartier index of $D$ is divisible by $p$, this fact is not always true. Therefore, we cannot apply Graf's logarithmic extension theorem directly to reduce Theorem \ref{BSV, Intro} to the case where $(X, B)$ is log smooth. 

Moreover, in positive characteristic, reducing to the case of log smooth surfaces is not enough because the Bogomolov-Sommese vanishing theorem is not known even for such pairs. Proposition \ref{ext thm} asserts that $D_Y$ can be taken as $\lceil f^{*}D \rceil$, and this enables us to apply the Akizuki-Nakano vanishing theorem (Theorem \ref{Hara's vanishing}) when $D$ is ample. 
\end{rem}
\begin{proof}[Proof of Proposition \ref{ext thm}]
\textbf{Step~0.}
Throughout the proof of this proposition, $\Omega_W^{[1]}(\log\,B_W)(-D_W)$ denotes
 $(\Omega_W^{[1]}(\log\,B_W)\otimes \sO_W(-D_W))^{**}$ for every surface pair $(W, B_W)$ and $\Z$-divisor $D_W$.
By Lemma \ref{push} (1), $\Phi$ is injective.
Since $(X, \lfloor B \rfloor)$ is lc (see \cite[Proposition 7.2]{Gra}), by replacing $B$ with $\lfloor B \rfloor$, we may assume that $B$ is reduced.
Moreover, since the assertion of the proposition is local on $X$, we may assume that $X$ is affine.
Therefore, it suffices to show that
\[
\Phi\colon H^0(Y, \Omega_{Y}^{[1]}(\log\,B_Y)(-\lceil f^{*}D \rceil))\hookrightarrow H^0(X, \Omega_X^{[1]}(\log\,B)(-D))
\]
is surjective.

\textbf{Step~1.}
First, we prove the following claim.
\begin{cl}\label{step}
Suppose that 
\begin{itemize}
    \item $(Y, B_Y)$ is tamely dlt and
    \item $-(K_Y+B_Y)$ is $f$-nef.
\end{itemize}
Then $\Phi$ is surjective.
\end{cl}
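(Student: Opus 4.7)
The surjectivity of $\Phi$ is local on $X$, so I may assume $X$ is affine and reduce to the surjectivity on global sections. Given $t \in H^0(X, \Omega_X^{[1]}(\log\,D)(-B))$, the canonical identification on the big open $V \coloneqq X \setminus f(\Exc(f))$, over which $f$ restricts to an isomorphism, exhibits $t|_V$ as a section $t'$ of $\Omega_Y^{[1]}(\log\,D_Y)(-\lceil f^{*}B \rceil)$ over $Y \setminus \Exc(f)$. The content of the claim is therefore to extend $t'$ across $\Exc(f)$; this is genuinely nontrivial because $\Exc(f)$ has codimension one in $Y$, so the standard reflexive-extension trick on big opens does not apply directly on $Y$.

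First, I apply Graf's extension theorem \cite[Theorem 1.2]{Gra}, which is available because $(Y, D_Y)$ is tamely dlt and $p > 5$, to extend $t'$ to a section $\tilde{t} \in H^0(Y, \Omega_Y^{[1]}(\log\,D_Y))$. Writing the Mumford pullback as $f^{*}B = f_{*}^{-1}B + \sum c_i E_i$ with $c_i \in \Q$, the remaining content is to verify $\operatorname{ord}_{E_i}(\tilde{t}) \geq \lceil c_i \rceil$ for every $f$-exceptional prime $E_i$; by integrality of orders of sections this is equivalent to $\operatorname{ord}_{E_i}(\tilde{t}) \geq c_i$. The vanishing along $f_{*}^{-1}B$ is automatic from comparison with $t$ on the codimension-two complement of $\Exc(f)$.

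To establish the ceiling estimate, I exploit the tameness hypothesis via an index-one cover. Let $n$ denote the Cartier index of $K_Y + D_Y$; by assumption $p \nmid n$. After shrinking $X$ if necessary, form the cyclic cover $\sigma \colon \tilde{Y} \to Y$ that trivializes $n(K_Y + D_Y)$. Because $p \nmid n$, the cover $\sigma$ is \'etale in codimension one, so the pulled-back pair $(\tilde{Y}, \sigma^{*}D_Y)$ is canonical with $K_{\tilde{Y}} + \sigma^{*}D_Y$ Cartier, and the reflexive pullback identifies $\sigma^{[*]}\Omega_Y^{[1]}(\log\,D_Y)$ with $\Omega_{\tilde{Y}}^{[1]}(\log\,\sigma^{*}D_Y)$. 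Next, I pass to a log resolution $\mu \colon Z \to \tilde{Y}$ of $(\tilde{Y}, \sigma^{*}D_Y)$ together with the strict transform of $\sigma^{*}B$. On this log smooth model, the Mumford pullback of $B$ coincides with the usual pullback of divisors, and the vanishing of the pulled-back form along each exceptional prime can be read off from local log smooth frames and the integrality of orders of regular functions. The $f$-nef hypothesis enters at exactly this point: combined with $(X, D)$ lc and the negativity lemma applied to the effective exceptional divisor $K_Y + D_Y - f^{*}(K_X + D)$, it tightly constrains the discrepancies $1 - b_i$ and hence controls how the Mumford pullback coefficients propagate through the tower $Z \to \tilde{Y} \to Y$. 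Where an additional cohomological vanishing is needed to rule out a local obstruction, Hara's logarithmic Akizuki-Nakano vanishing (Theorem \ref{Hara's vanishing}) is applied to the log smooth $W_2$-liftable pair obtained on $Z$ from the tame cover. Finally, taking invariants under the Galois action of $G \cong \Z/n$, which is exact because $p \nmid n$, descends the estimate back to $\tilde{t}$ on $Y$.

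The hardest part will be the quantitative step on $Z$: while the integrality of pullbacks of functions is essentially formal, extracting the corresponding ceiling estimate for the coefficients of a differential form in a log smooth local frame requires a careful residue-type argument (or Hara's vanishing) and a precise bookkeeping of Mumford pullbacks through both $\sigma$ and $\mu$. Tameness is indispensable throughout: it ensures $\sigma$ is \'etale in codimension one (so reflexive sheaves pull back correctly), makes $(\tilde{Y}, \sigma^{*}D_Y)$ canonical (so the log resolution is well-behaved), and keeps Galois descent exact. The $f$-nef hypothesis is what ultimately links Graf's extension $\tilde{t}$ to the Mumford pullback bound.
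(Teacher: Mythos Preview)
Your starting move is correct: by Graf's extension theorem you obtain a lift $\tilde{t}\in H^0(Y,\Omega_Y^{[1]}(\log D_Y))$ of $t$, and the problem reduces to showing $\operatorname{ord}_{E_i}(\tilde{t})\geq c_i$ for each $f$-exceptional prime $E_i$. However, the method you propose to establish this bound does not work, and the actual mechanism is entirely different.

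First, the index-one cover $\sigma$ trivializes $n(K_Y+D_Y)$, not $nB$. The Mumford pullback $f^{*}B$ has fractional exceptional coefficients controlled by the intersection matrix of $\Exc(f)$, and these have no a priori relation to the Cartier index of $K_Y+D_Y$; passing to $\tilde{Y}$ does nothing to make $\sigma^{*}f^{*}B$ integral. Second, Theorem \ref{Hara's vanishing} is a global vanishing on a \emph{projective} surface for a \emph{nef and big} divisor; after you localize to affine $X$ there is no projective $Z$ and no nef big divisor in sight, so it is simply inapplicable here. Third, your use of $f$-nefness (negativity lemma for $K_Y+D_Y-f^{*}(K_X+D)$) only recovers that the discrepancies satisfy $a_i\le 1$, which is already the lc hypothesis and carries no information about $B$.

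What the paper does instead is an iterative \emph{residue} argument on $Y$ itself, using Graf's exact sequences \cite[Theorem 1.4]{Gra} for tamely dlt pairs. View $\tilde{t}$ as an injection $s_Y\colon\sO_Y(B_Y)\hookrightarrow\Omega_Y^{[1]}(\log D_Y)$ with $f_{*}B_Y=B$. If $B_Y-f^{*}B$ is not effective, the negativity lemma (applied to $B_Y-f^{*}B$, not to $K_Y+D_Y$) produces an exceptional $E_1$ with $B_Y\cdot E_1>0$. This positivity kills the residue composition $\sO_Y(B_Y)\to\sO_{E_1}$ (since $\sO_{E_1}(mB_Y)$ has positive degree it admits no map to $\sO_{E_1}$), so $s_Y$ factors through $\Omega_Y^{[1]}(\log(D_Y-E_1))$. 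Then the second residue sequence maps to $\omega_{E_1}(\lfloor E_1^c\rfloor)$; here $f$-nefness of $-(K_Y+D_Y)$ is used precisely to bound $\deg\omega_{E_1}(\lfloor E_1^c\rfloor)\le(K_Y+D_Y)\cdot E_1\le 0$, killing that map too. One concludes $s_Y$ factors through $\sO_Y(B_Y+E_1)\hookrightarrow\Omega_Y^{[1]}(\log D_Y)$, and iterating makes $B_Y-f^{*}B$ effective, hence $B_Y\ge\lceil f^{*}B\rceil$. The tameness hypothesis is needed exactly so that Graf's residue sequences exist with the required symmetric-power compatibility; no cover or log resolution is taken.
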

\begin{clproof}
We take $s\in H^0(X, \Omega_X^{[1]}(\log\, B)(-D))$. We construct a section of $H^0(Y, \Omega_{Y}^{[1]}(\log\, B_Y)(-\lceil \pi^{*}D \rceil))$ that maps to $s$ by $\Phi$.
We may assume that $s$ is non-zero, and thus $s$ is considered as an injective $\sO_X$-module homomorphism $s\colon \sO_X(D) \hookrightarrow \Omega_X^{[1]}(\log\,B)$.
By \cite[Theorem 6.1]{Gra}, the natural restriction morphism $f_{*}\Omega_Y^{[1]}(\log\,B_Y)\cong \Omega_X^{[1]}(\log\,B)$ is isomorphic.
Then we have a generically injective $\sO_Y$-module homomorphism
\[
f^{*}\sO_X(D) \overset{f^{*}s}{\to} f^{*}\Omega_X^{[1]}(\log\,B)\cong f^{*}f_{*}\Omega_{Y}^{[1]}(\log\,B_Y) \to \Omega^{[1]}_{Y}(\log\,B_Y).
\]
By taking double dual, we obtain an injective $\sO_Y$-module homomorphism
\[
s_Y\colon f^{[*]}\sO_X(D) \hookrightarrow \Omega^{[1]}_{Y}(\log\,B_Y),
\]
where $f^{[*]}\sO_X(D)\coloneqq (f^{*}\sO_X(D))^{**}$.
We take a $\Z$-divisor $D_Y$ on $Y$ such that $\sO_{Y}(D_Y)= f^{[*]}\sO_X(D)$.
Since $\sO_X(f_*D_Y)=(f_{*}\sO_Y(D_Y))^{**}=(f_{*}f^{[*]}\sO_X(D))^{**}=\sO_X(D)$, it follows that $f_{*}D_Y$ is linearly equivalent to $D$. 
By replacing $D_Y$ with $D_Y+f^{*}(D-f_{*}D_Y)$, we may assume that $f_*D_Y=D$.
In particular, $D_Y-f^{*}D$ is $f$-exceptional.

Now, we replace $D_Y$ so that $D_Y-f^{*}D$ is effective.
We assume that $D_Y-f^{*}D$ is not effective. 
By applying the negativity lemma to the negative coefficients part of $D_Y-f^{*}D$, we can take a prime $f$-exceptional divisor $E_1$ such that $\mathrm{mult}_{E_1}(D_Y-f^{*}D)<0$ and $D_Y\cdot E_1>0$.
Then we can show that $s_Y$ factors though an injective $\sO_Y$-module homomorphism $\sO_{Y}(D_Y+E_1)\hookrightarrow \Omega^{[1]}_{Y}(\log\,B_Y)$.
This follows from the essentially same argument as \cite[Theorem 6.1]{Gra}, but we provide the proof here for the completeness.  

Since $(Y, B_Y)$ is tamely dlt, we have the following commutative diagram \begin{equation*}
\xymatrix{ & & \sO_{Y}(D_Y) \ar@{.>}[ld] \ar[d]^-{s_Y} \ar[rd]^-{t} &\\
                 0\ar[r] &\Omega^{[1]}_{Y}(\log\,B_Y-E_1)\ar[r]   & \Omega^{[1]}_{Y}(\log\,B_Y) \ar[r]^{\res_{E_1}}  & \sO_{E_1} \ar[r] & 0,}
\end{equation*}
and a surjective morphism 
\[
\res^m_{E_1}\colon \Sym^{[m]}\Omega_Y^{[1]}(\log\,B_Y)\coloneqq (\Sym^{m}\Omega_Y^{[1]}(\log\,B_Y))^{**}\to \sO_{E_1}
\]
for each $m>0$ that coincides with $\Sym^{m}(\res_{E_1})$ at the generic point of $E_1$ by \cite[Theorem 1.4 (1.4.1)]{Gra}. 
We show that $t$ is zero. For the sake of contradiction, we assume that $t$ is not zero. Since $\Im(t)\subset \sO_{E_1}$ is a torsion-free $\sO_{E_1}$-module, it follows that $t$ is non-zero at the generic point of $E_1$ and so is $\Sym^{m}(t)\colon\sO_Y(D_Y)^{\otimes m}\to \sO_{E_1}$. Since $\Sym^{m}(s_Y)$ and $\Sym^{m}(\res_{E_1})$ coincides with $\Sym^{[m]}(s_Y)\coloneqq (\Sym^m(s_Y))^{**}$ and $\res^m_{E_1}$ at the generic point of $E_1$ respectively, the composition $\res^m_{E_1}\circ\Sym^{[m]}(s_Y)$ coincides with $\Sym^m(t)=\Sym^m(\res_{E_1})\circ\Sym^m(s_Y)$, and in particular, is non-zero at the generic point of $E_1$. Now, we fix $m>0$ such that $mD_Y$ is Cartier. Note that $Y$ is $\Q$-factorial since $(Y, B_Y)$ is dlt.
By restricting $\res^m_{E_1}\circ\Sym^{[m]}(s_Y)$ to $E_1$, we obtain an injective $\sO_{E_1}$-module homomorphism $\sO_{E_1}(mD_Y)\hookrightarrow \sO_{E_1}$ and hence $0<mD_Y\cdot E_1=\deg(\sO_{E_1}(mD_Y))\leq 0$, a contradiction. Therefore $t$ is zero and the morphism $s_Y$ factors through $\sO_{Y}(D_Y) \to \Omega^{[1]}_{Y}(\log\,B_Y-E_1)$.
Then, by \cite[Theorem 1.5 (1.5.1)]{Gra}, we obtain the following commutative diagram
\begin{equation*}
\xymatrix{ & & \sO_{Y}(D_Y) \ar@{.>}[ld] \ar[d]^{s_Y} \ar[rd]^{v} &\\
                 0\ar[r] &\Omega^{[1]}_{Y}(\log\,B_Y)(-E_1)\ar[r]   & \Omega^{[1]}_{Y}(\log\,B_Y-E_1) \ar[r]^-{\mathrm{restr_{E_1}}}  & \omega_{E_1}(\lfloor E_1^c \rfloor) \ar[r] & 0,}
\end{equation*}
and a surjective morphism 
\[
\mathrm{restr}^{m}_{E_1}\colon \Sym^{[m]}\Omega^{[1]}_{Y}(\log\,(B_Y-E_1))\to \sO_{E_1}(mK_{E_1}+\lfloor mE_1^c \rfloor)
\]
that coincides with $\Sym^{m}(\mathrm{restr_{E_1}})$ at the generic point $E_1$.
Here, $E_1^c$ denotes the different $\mathrm{Diff}_{E_1}(B_Y-E_1)$ (see \cite[Definition 4.2]{Kol13} for the definition).
Since $-(K_Y+B_Y)$ is $f$-nef, it follows that 
\[
\deg(\sO_{E_1}(mK_{E_1}+\lfloor mE_1^c \rfloor))\leq (mK_Y+mB_Y)\cdot E_1\leq 0
\]
for all $m>0$ and hence an argument similar to above shows that $v=0$ and $s_Y$ factors through $\sO_{Y}(D_Y) \hookrightarrow \Omega^{[1]}_{Y}(\log\,B_Y)(-E_1)$. 
In particular, we obtain an injective $\sO_Y$-module homomorphism $\sO_{Y}(D_Y+E_1)\hookrightarrow \Omega^{[1]}_{Y}(\log\,B_Y)$ that coincides with $s_Y$ on $Y\setminus\Exc(f)$.
By replacing $D_Y$ with $D_Y+E_1$, and repeating the above procedure, we can assume that $D_Y-f^{*}D$ is effective.

Now, we obtain a $\Z$-divisor $D_Y$ on $Y$ such that $D_Y-f^{*}D\geq 0$ and a morphism $s_Y \in H^0(Y, \Omega_{Y}^{[1]}(\log\, B_Y)(-D_Y))$, which maps to $s$ under the natural restriction morphism 
\[
\Phi' \colon H^0(Y, \Omega_{Y}^{[1]}(\log\, B_Y)(-D_Y))\to H^0(X, \Omega_X^{[1]}(\log\,B)(-D)).
\]

Since $\lceil f^{*}D \rceil\leq \lceil D_Y \rceil=D_Y$,
it follows that $\Phi'$ decomposes into the natural injective morphism
\[
\Theta \colon H^0(Y, \Omega_{Y}^{[1]}(\log\, B_Y)(-D_Y))\hookrightarrow H^0(Y, (\Omega_{Y}^{[1]}(\log\, B_Y)(-\lceil f^{*}D \rceil))
\]
and the morphism 
\[
\Phi\colon H^0(Y, \Omega_{Y}^{[1]}(\log\,B_Y)(-\lceil f^{*}D \rceil))\hookrightarrow H^0(X, \Omega_X^{[1]}(\log\,B)(-D)).
\]
Now we have $\Phi(\Theta(s_Y))=\Phi'(s_Y)=s$ and hence $\Phi$ is surjective. Thus we finish the proof of the claim.
\end{clproof}

\textbf{Step~2.}
Next, we confirm that we may assume that $f\colon Y\to X$ is a log resolution of $(X, B)$.
Let $\tilde{f}\colon Z\to X$ be a log resolution of $(X,B)$ that factors through $f$.
Suppose that the natural restriction morphism
\[
\Phi_{Z,X} \colon H^0(Z, \Omega_{Z}(\log\,B_Z)(-\lceil \tilde{f}^{*}D \rceil))\hookrightarrow H^0(X, \Omega_X^{[1]}(\log\,B)(-D))
\]
is surjective, where $B_Z\coloneqq \tilde{f}^{-1}_{*}B+\Exc(\tilde{f})$. 
Since $\Phi_{Z,X}$ factors through the natural restriction
\[
\Phi_{Y,X} \colon H^0(Y, \Omega^{[1]}_{Y}(\log\,B_Y)(-\lceil f^{*}D \rceil))\hookrightarrow H^0(X, \Omega_X^{[1]}(\log\,B)(-D)),
\]
the restriction $\Phi_{Y,X}$ is also surjective.
Thus we may assume that $f\colon Y\to X$ is a log resolution of $(X, B)$.

\textbf{Step~3.}
Finally, we show that the surjectivity of $\Phi$ and finish the proof of the proposition. By Lemma \ref{Lemma:tame decomposition}, a log resolution $f$ admits a tame decomposition
\[
f\colon Y_0\coloneqq Y\overset{f_1}{\to} Y_1 \to \cdots \overset{f_m}{\to} Y_m\coloneqq X.
\]

By the claim in Step~1, the natural restriction morphisms
\begin{align*}
\Phi_{m-1,m} \colon &H^0(Y_{m-1}, \Omega_{Y_{m-1}}^{[1]}(\log\,B_{Y_{m-1}})(-\lceil f_{m}^{*}D \rceil))\cong H^0(X, \Omega_X^{[1]}(\log\,B)(-D)),\\
\Phi_{m-2,m-1} \colon &H^0(Y_{m-2}, \Omega_{Y_{m-2}}^{[1]}(\log\,B_{Y_{m-2}})(-\lceil f_{m-1}^{*}\lceil f_{m}^{*}D \rceil \rceil))\\\cong &H^0(Y_{m-1}, \Omega_{Y_{m-1}}^{[1]}(\log\,B_{Y_{m-1}})(-\lceil f_{m}^{*}D \rceil))
\end{align*}
are isomorphic.
Then $\Phi_{m-1,m}\circ\Phi_{m-2,m-1}$ factors through the natural restriction morphism
\begin{align*}
\Phi_{m-2,m}\colon &H^0(Y_{m-2}, \Omega_{Y_{m-2}}^{[1]}(\log\,B_{Y_{m-2}})(-\lceil f_{m-1}^{*}f_{m}^{*}D\rceil))\\
\hookrightarrow &H^0(X, \Omega_X^{[1]}(\log\,B)(-D)),
\end{align*}
and hence $\Phi_{m-2,m}$ is isomorphic.
By repeating this procedure, we can conclude that $\Phi$ is an isomorphism. 
\end{proof}

\subsection{Proof of Theorem \ref{BSV, Intro}}
In this subsection, we prove Theorem \ref{BSV, Intro}.
First, we show the Bogomolov-Sommese vanishing theorem on a surface admitting a fibration structure including a Mori fiber space and an lc trivial fibration.

\begin{lem}\label{LCTF}
Let $X$ be a normal surface over an algebraically closed field $k$ of characteristic $p>3$ and $B$ a reduced divisor on $X$.  Let $f\colon X\to Z$ be a projective surjective morphism such that $\dim\,Z=1$, $f_{*}\sO_X=\sO_Z$, and $-(K_X+B)$ is $f$-nef.
Then 
\[
f_{*}(\Omega_X^{[1]}(\log\,B)\otimes \sO_X(-D))^{**}=0
\]
for every $\Z$-divisor $D$ satisfying $D\cdot F>0$ for a general fiber $F$ of $f$.
\end{lem}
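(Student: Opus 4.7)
The plan is to derive a contradiction by assuming a nonzero section of the pushforward and restricting it to a general closed fiber via the logarithmic relative cotangent sequence.

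Suppose for contradiction that $f_{*}(\Omega_X^{[1]}(\log D)(-B))^{**} \neq 0$. Then there exist an open subset $U \subseteq Z$ and a nonzero section $s \in H^0(V, (\Omega_X^{[1]}(\log D)(-B))^{**})$, where $V := f^{-1}(U)$. On the snc locus of $(X, D)$ inside $V$, the section corresponds to a nonzero morphism $\sigma \colon \sO_V(B) \to \Omega_X^{1}(\log D)|_V$. Since $X$ is a normal surface and $D$ is a reduced curve, the non-snc locus $\operatorname{Sing}(X) \cup \operatorname{Sing}(D)$ is a finite set of closed points. For a general closed point $z \in U$, the fiber $F = f^{-1}(z)$ avoids this locus, is smooth, and meets $D$ transversely; smoothness relies on $f$ being generically smooth, which in characteristic $p > 3$ is automatic since wild conic bundles and quasi-elliptic fibrations cannot occur. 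On a neighborhood of $F$ the pair is log smooth over $Z$, so the logarithmic relative cotangent sequence
\[
0 \to f^{*}\Omega_Z^1 \to \Omega_X^1(\log D) \to \Omega_{X/Z}^1(\log D) \to 0
\]
is exact there, and restricting to $F$ yields
\[
0 \to \sO_F \to \Omega_X^1(\log D)|_F \to \Omega_F^1(\log(F \cap D)) \to 0.
\]
Since $(\Omega_X^{[1]}(\log D)(-B))^{**}$ is torsion-free and $\sigma \neq 0$, the restriction $\sigma_F \colon \sO_F(B|_F) \to \Omega_X^1(\log D)|_F$ is nonzero for $z$ in a dense open subset of $U$ (otherwise $\sigma$ would vanish along infinitely many disjoint divisors $F_z$ and hence be zero).

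I would then split into two cases according to the composition $\bar\sigma_F \colon \sO_F(B|_F) \to \Omega_F^1(\log(F \cap D))$. If $\bar\sigma_F \neq 0$, it is an injection of line bundles on the smooth curve $F$, so by adjunction ($K_F = K_X|_F$ since $F^2 = 0$) together with the $f$-nefness of $-(K_X+D)$,
\[
B \cdot F \;\leq\; \deg \Omega_F^1(\log(F \cap D)) \;=\; (2g(F) - 2) + F \cdot D \;=\; (K_X + D) \cdot F \;\leq\; 0,
\]
contradicting $B \cdot F > 0$. If $\bar\sigma_F = 0$, then $\sigma_F$ factors through the embedding $\sO_F = f^{*}\Omega_Z^1|_F \hookrightarrow \Omega_X^1(\log D)|_F$, yielding a nonzero morphism $\sO_F(B|_F) \to \sO_F$ and hence $B \cdot F \leq 0$, again a contradiction.

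The main obstacle I anticipate is guaranteeing the smoothness of a general fiber in positive characteristic; this is precisely where the hypothesis $p > 3$ enters, since it rules out the inseparable (wild) fibrations that would otherwise prevent the log relative cotangent sequence from restricting cleanly to $F$.
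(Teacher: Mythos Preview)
Your approach is essentially the paper's: reduce to a general fiber, use the log relative cotangent sequence, and split into two cases. The paper first shrinks $Z$ so that $(X,D)$ becomes log smooth over $Z$ and then does the case split with the global map $t\colon \sO_X(B)\to\Omega_{X/Z}(\log D)$ rather than with its restriction $\bar\sigma_F$, but this is cosmetic.

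The one step you assert without proof is that a general fiber meets $D$ transversely. Smoothness of $F$ alone does not give this: a horizontal component $D_i$ with $D_i\to Z$ purely inseparable would meet \emph{every} fiber non-transversely, and your identification of the quotient with $\Omega_F^1(\log(F\cap D))$ and its degree with $F\cdot D$ both rely on transversality. The paper fills this via its case analysis: from $(K_X+D)\cdot F\le 0$ and $K_X\cdot F\in\{-2,0\}$ one gets $D\cdot F\le 2$, so every horizontal component of $D$ maps to $Z$ with degree at most $2<p$, hence separably, and transversality at a general $z$ follows. Once you insert this observation your argument is complete. (Incidentally, the mention of ``wild conic bundles'' is unnecessary: when $p_a(F)=0$, an integral curve over the algebraically closed field $k$ is automatically $\PP^1$; the only genuine obstruction to smoothness of $F$ is the quasi-elliptic case in $p=2,3$.)
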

\begin{proof}
Since $f_{*}(\Omega_X^{[1]}(\log\,B)\otimes \sO_X(-D))^{**}$ is torsion-free, it suffices to show that the rank of the sheaf is zero, and in particular, we can shrink $Z$ for the proof.
First, we prove the following claim.

\begin{cl}
By shrinking $Z$, we may assume that $B$ is snc over $Z$.
\end{cl}
\begin{clproof}
We note that a general fiber $F$ is reduced and irreducible since $\dim\,Z=1$ and $f_{*}\sO_X=\sO_Z$.
By shrinking $Z$, we may assume that all irreducible components of $B$ dominant $Z$.
Let $n\in\Z_{\geq0}$ be the number of the irreducible components of $B$.
Then we have 
\[
\deg(K_F)=K_X\cdot F\leq K_X\cdot F+n\leq (K_X+B)\cdot F\leq0
\]
and hence $(\deg(K_F), n)=(0, 0), (-2, 0), (-2, 1)$, or $(-2, 2)$.
If $(\deg(F), n)=(0, 0)$, then $B=0$ and $F$ is an elliptic curve since $p>3$. 
Similarly, if $(\deg(F), n)=(-2, 0)$, then $B=0$ and $F\cong \PP_k^1$. 
Next, if $(\deg(F), n)=(-2, 1)$, then $F\cong \PP_k^1$ and $B\cdot F=1$ or $2$. In the case where $B\cdot F=1$, it follows that $B$ and $F$ intersect transversally. 
In the case where $B\cdot F=2$, the restricted morphism $f|_{B}\colon B\to Z$ is generically \'etale since $p\neq 2$.  
Finally, if $(\deg(F), n)=(-2, 2)$, then $B_1\cdot F=B_2\cdot F=1$ and thus both $B_1$ and $B_2$ intersect transversally with $F$, where $B_1$ and $B_2$ are irreducible components of $B$.

Therefore, in each case, we can assume that $B$ is snc over $Z$ by shrinking $Z$ and finish the proof of the claim.
\end{clproof}

Now, we show that the assertion of the lemma.
We shrink $Z$ so that $Z$ is affine and $B$ is snc over $Z$.
Note that $(X, B)$ is log smooth in this case.
For the sake of contradiction, we assume that 
\[
H^0(X, \Omega_X(\log\,B)\otimes \sO_X(-D))\neq 0
\]
for some $\Z$-divisor $D$ satisfying $D\cdot F>0$.
Then there exists an injective $\sO_X$-module homomorphism $s\colon \sO_X(D)\hookrightarrow \Omega_X(\log\,B)$.
Since $B$ is snc over $Z$, we have the following diagram:
\begin{equation*}
\xymatrix{ & & \sO_{X}(D) \ar@{.>}[ld] \ar[d]^{s} \ar[rd]^{t} &\\
                 0\ar[r] &\sO_X(f^{*}K_Z)\ar[r]   & \Omega_X(\log\,B) \ar[r]  & \Omega_{X/Z}(\log\, B) \ar[r] & 0.}
\end{equation*}
 In the above diagram, when $B\neq 0$, we define $\Omega_X(\log\,B) \to \Omega_{X/Z}(\log\, B)$ by $d(f^{*}{z})\mapsto 0, dx/x\mapsto dx/x$, where $z$ is a coordinate on $Z$ and $x$ is a local equation of $B$. Note that $f^{*}{z}$ and $x$ form coordinates on $X$ since $B$ is snc over $Z$. When $B=0$, this is the usual relative differential sequence for $f$ (\cite[II Proposition 8.11]{Har}).
Suppose that $t$ is non-zero. Then, by restricting $t$ to $F$, we have an injective $\sO_F$-module homomorphism $t|_{F}\colon \sO_F(D) \hookrightarrow \Omega_{F}(\log\, B|_F)=\sO_F(K_F+B_F)$, where the injectivity follows from the generality of $F$. This shows that 
\[
0<\deg (D|_F)\leq\deg(K_F+B|_F)=(K_X+B)\cdot F\leq0,
\]
a contradiction.
Thus $t$ is zero and the morphism $s$ factors through $\sO_X(D)\hookrightarrow \sO_X(f^{*}K_Z)$.
Then by considering the restriction to $F$, we obtain
\[
0<\deg (D|_F)\leq\deg(f^*K_Z|_F)=0,
\]
a contradiction.
Hence we conclude that $H^0(X, \Omega_X(\log\,B)\otimes \sO_X(-D))=0$.
\end{proof}

Now, we prove Theorem \ref{BSV, Intro}.

\begin{proof}[Proof of Theorem \ref{BSV, Intro}]
\textbf{Step~0.}
By replacing $B$ with $\lfloor B\rfloor$, we may assume that $B$ is reduced. 
Since the assertion is obvious when $i=0$ or $2$, it suffices to show that 
\begin{equation}
    H^0(X, (\Omega_{X}^{[1]}(\log\,B)\otimes \sO_{X}(-D))^{**})=0 \tag{a}
\end{equation}
for every big $\Z$-divisor $D$.
Let $h\colon (W, B_W\coloneqq h^{-1}_*B+\Exc(h))\to (X,B)$ be a dlt blow-up. 
Then $\kappa(W, K_W+B_W)=\kappa(X, K_X+B)$ and 
the vanishing (a) is equivalent to saying that
\begin{equation}
H^0(W, (\Omega_{W}^{[1]}(\log\,B_W)\otimes \sO_{W}(-\lceil h^{*}D\rceil))^{**})=0 \tag{b}
\end{equation}
when $p>5$ by Proposition \ref{ext thm}.
We set $D_W\coloneqq \lceil h^{*}D\rceil$. By Remark \ref{nefness and bigness}, $D_W$ is big.

\textbf{Step~1.} 
First, we assume that $\kappa(X, K_X+B)=-\infty$ and $p>5$. We show the vanishing (b).
In this case, $K_W+B_W$ is not pseudo-effective by the abundance theorem (\cite[Theorem 1.2]{Tan12}).
By Lemma \ref{push} (1) and (2), we can replace $W$ with an output of a $(K_W+B_W)$-MMP and assume that $W$ has a $(K_W+B_W)$-Mori fiber space structure $f\colon W\to Z$. 
If $\dim\,Z=1$, then the assertion follows from Lemma \ref{LCTF}.
Thus we assume that $\dim\,Z=0$. In this case, $W$ is a klt del Pezzo surface of Picard rank one and $D_W$ is an ample $\Q$-Cartier $\Z$-divisor.
Let $\pi\colon Y\to W$ be a log resolution of $(W, B_W)$, $B'\coloneqq \pi^{-1}_{*}B_W$, $E\coloneqq \Exc(\pi)$, and $B_Y\coloneqq B'+E$.
Then by Proposition \ref{ext thm}, it suffices to show that 
\[
H^0(Y, \Omega_{Y}(\log\,B_Y)\otimes \sO_{Y}(-\lceil \pi^{*}D_W \rceil))=0.
\]
For the sake of contradiction, we assume that there exists an injective $\sO_Y$-module homomorphism $s\colon \sO_Y(\lceil \pi^{*}D_W \rceil))\hookrightarrow \Omega_{Y}(\log\,B_Y)$.
We show that $s$ factors through $s\colon \sO_Y(\lceil \pi^{*}D_W \rceil))\hookrightarrow \Omega_{Y}(\log\,E)$.
Let $B'_1$ be an irreducible component of $B'$.
Since $(Y, B_Y)$ is log smooth, we obtain the following diagram
\begin{equation*}
\xymatrix{ & & \sO_{Y}(\lceil \pi^{*}D_W\rceil) \ar@{.>}[ld] \ar[d]^{s} \ar[rd]^{t} &\\
                 0\ar[r] &\Omega_{Y}(\log\,B_Y-B'_1)\ar[r]   & \Omega_{Y}(\log\,B_Y) \ar[r]  & \sO_{B'_1} \ar[r] & 0.}
\end{equation*}
Since $B'_1$ is not $\pi$-exceptional and $D_W$ is an ample $\Q$-Cartier $\Z$-divisor, it follows that 
\[
\lceil \pi^{*}D_W\rceil \cdot B'_1\geq \pi^{*}D_W\cdot B'_1= D_W\cdot \pi_{*}B'_1>0,
\] 
and hence $t$ is zero.
Thus the morphism $s$ factors through $\sO_Y(\lceil \pi^{*}D_W\rceil)\hookrightarrow \Omega_{Y}(\log\,B_Y-B'_1)$. By repeating this procedure, we can show that $s$ factors through $\sO_Y(\lceil \pi^{*}D_W\rceil)\hookrightarrow \Omega_{Y}(\log\,E)$.
By \cite[Theorem 1.4]{Lac} and Lemma \ref{every lift=some lift} (1), it follows that $(Y, E)$ lifts to $W(k)$. 
Now, since $\pi^{*}D_W$ is a nef and big $\Q$-divisor whose support of the fractional part is contained in $E$, Theorem \ref{Hara's vanishing} shows that $0\neq s\in H^0(Y, \Omega_Y(\log\,E))\otimes \sO_Y(-\lceil \pi^{*}D_W \rceil)=0$, a contradiction.

\textbf{Step~2.}
Next, we assume that $\kappa(X, K_X+B)=0$ and prove the vanishing (b).
We can replace $(W, B_W)$ with the $(K_W+B_W)$-minimal model by Lemma \ref{push} and hence assume that $K_W+B_W\equiv 0$. 

\textbf{Step~2-1.}
First, we assume that $B_W\neq 0$ and $p>5$. In this case, $K_W$ is not pseudo-effective and we can run a $K_W$-MMP to obtain a birational contraction $\phi\colon W\to W'$ and a $K_{W'}$-Mori fiber space $f\colon W'\to Z$. Since $K_W+B_W\equiv 0$, the negativity lemma shows that $K_W+B_W=\phi^{*}(K_{W'}+B_{W'})$, where $B_{W'}\coloneqq \phi_{*}B_W$. Thus $(W', B_{W'})$ is log Calabi-Yau and $W'$ is klt.
By Lemma \ref{push}, we can replace $(W, B_W)$ with $(W', B_{W'})$. 
If $\dim\,Z=1$, then the assertion follows from Lemma \ref{LCTF}.
Thus we may assume that $\dim\,Z=0$. In this case, $W$ is a klt del Pezzo surface of Picard rank one and $D_W$ is an ample $\Q$-Cartier $\Z$-divisor.
Let $\pi\colon Y\to W$ be a log resolution of $(W, B_W)$, $B'\coloneqq \pi^{-1}_{*}B_W$, $E\coloneqq \Exc(\pi)$, and $B_Y\coloneqq B'+E$.
As in Step~1, we derive a contradiction assuming there exists an injective $\sO_Y$-module homomorphism $s \colon \sO_Y(\lceil \pi^{*}D_W\rceil) \hookrightarrow \Omega_Y(\log\,B_Y)$. 
Since $B'\neq 0$, we can take an irreducible component $B'_1$ of $B'$.
Since $B'_1$ is not $\pi$-exceptional and $D_W$ is an ample $\Q$-Cartier $\Z$-divisor,
an argument as in Step~1 shows that the morphism $s$ factors through $\sO_Y(\lceil \pi^{*}D_W \rceil) \to \Omega_Y(\log\, B_Y-B'_1)$. Since $K_W+B_W\equiv 0$, we have $\kappa(Y, K_Y+B_Y-B'_1)=-\infty$.
Now, we obtain a contradiction by Step~1.

\textbf{Step~2-2.}
Next, we assume that $B_W=0$. 
In this case, $W$ is a klt Calabi-Yau surface.
We take a positive integer $n$ as in Lemma \ref{max Gor index} and assume $p>n$.
We show that we may assume that $D_W$ is nef and big.
Let $D_W\equiv P+N$ be the Zariski decomposition. Note that we can take the Zariski decomposition even when $X$ is singular (\cite[Theorem 3.1]{Eno}).
We take a rational number $0<\epsilon \ll 1$ such that $(W, \epsilon N)$ is klt.
Since $K_W$ is torsion by the abundance theorem (\cite[Theorem 1.2]{Tan12}) and $N$ is negative definite, it follows that $\kappa(K_W+\epsilon N)=\kappa(X, N)=0$.
We run a $(K_W+\epsilon N)$-MMP to obtain a birational contraction $\phi\colon W\to W'$ to a $(K_W+\epsilon N)$-minimal model $W'$.
Then $K_{W'}=\phi_{*}K_W\equiv0$, and in particular, $W'$ is klt Calabi-Yau.
Moreover, $\phi_{*}\epsilon N\equiv K_{W'}+\phi_{*}\epsilon N\equiv 0$, and hence $\phi_{*}D_W\equiv\phi_{*}P$ is nef and big. By Lemma \ref{push}, we can replace $W$ with $W'$ and assume that $D_W$ is nef and big.

We next reduce to the case where $W$ is canonical Calabi-Yau.
We assume that $W$ is a strictly klt Calabi-Yau surface. 
By Lemma \ref{Cartier index}, the positive integer $n$ is the minimum integer such that $nK_W=0$.
Then we can take a cyclic cover $\tau\colon \tilde{W}\to W$ associated to a non-zero global section of $nK_W=0$.
Since $n$ is not divisible by $p$, it follows that $\tau$ is \'etale in codimension one, and hence we obtain an injective morphism
\[
H^0(W, (\Omega_X^{[1]}\otimes \sO_W(-D_W))^{**})\hookrightarrow H^0(\tilde{W}, (\Omega_{\tilde{W}}^{[1]}\otimes \sO_{\tilde{W}}(-\tau^{*}D_W))^{**})
\]
and $\tau^{*}D_W$ is nef and big.
By replacing $W$ with $\tilde{W}$, we may assume that $W$ has only canonical singularities.

Now, we show the vanishing (b). Let $\pi\colon Y\to W$ be the minimal resolution and $E\coloneqq \Exc(\pi)$. 
By Proposition \ref{ext thm}, it suffices to show that 
\[
H^0(Y, \Omega_{Y}(\log\,E)\otimes \sO_{Y}(-\lceil \pi^{*}D_W \rceil))=0.
\]
Since $p>n\geq 19$ by Remark \ref{Rem:max Gor index}, the pair $(Y, E)$ lifts to $W_2(k)$ by Proposition \ref{Prop : Lift of canonical CY}.
Thus we conclude the desired vanishing by Theorem \ref{Hara's vanishing}.

\textbf{Step~3.}
Finally, we assume that $\kappa(X, K_X+B)=1$ and $p>3$. We prove the vanishing (a) directly.
In this case, by replacing $(X, B)$ with its $(K_X+B)$-minimal model, we may assume that $K_X+B$ is semiample and $\kappa(X, K_X+B)=1$. Then there exists a projective morphism $f\colon X\to Z$ such that $\dim\,Z=1$, $f_{*}\sO_X=\sO_Z$, and $K_X+B$ is numerically trivial over $Z$. Now, by Lemma \ref{LCTF}, we obtain the assertion.

We will check the sharpness of the explicit bounds on $p_0$ in Example \ref{sharpness of char}.
\end{proof}

We recall the definition of a \textit{globally sharply $F$-split pair}, which is a positive characteristic analog of a log Calabi-Yau pair in characteristic zero.

\begin{defn}[\textup{\cite[Definition 3.1]{SS10}}]\label{defition:GFS} 
Let $(X, B)$ be a pair over an algebraically closed field of characteristic $p>0$. 
We say that $(X, B)$ is \textit{globally sharply $F$-split} if there exists a positive integer $e\in \Z_{>0}$ such that the composite map 
\[
\mathcal{O}_X \to F^e_*\mathcal{O}_X \hookrightarrow F^e_*\mathcal{O}_X(\lceil (p^e-1)B\rceil)
\]
of the $e$-times iterated Frobenius morphism $\mathcal{O}_X \to F^e_*\mathcal{O}_X$ and the natural inclusion $F^e_*\mathcal{O}_X \hookrightarrow F^e_*\mathcal{O}_X(\lceil (p^e-1)B\rceil)$ 
splits as an $\mathcal{O}_X$-module homomorphism. 
\end{defn}

By a similar argument to Theorem \ref{BSV, Intro}, we can show the Bogomolov-Sommese vanishing theorem for a globally sharply $F$-split surface pair.

\begin{prop}\label{BSV for GSFS}
Let $(X, B)$ be a globally sharply $F$-split surface pair over an algebraically closed field of characteristic $p>5$.
Then
\[
H^0(X, (\Omega_X^{[i]}(\log\,\lfloor B\rfloor)\otimes \sO_X(-D))^{**})=0
\]
for every $\Z$-divisor $D$ on $X$ satisfying $\kappa(X, D)>i$.
\end{prop}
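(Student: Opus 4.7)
The plan is to parallel the case analysis in the proof of Theorem \ref{BSV, Intro}, with the GSFS hypothesis playing two roles. First, a GSFS pair is lc (see \cite{SS10}), so Proposition \ref{ext thm} applies. Second, applying Grothendieck duality to the splitting in Definition \ref{defition:GFS} produces a nonzero global section of $\sO_X((1-p^e)K_X-\lceil p^e D\rceil)$, so $-(K_X+D)$ is $\Q$-effective; then $-(K_X+\lfloor D\rfloor)=-(K_X+D)+(D-\lfloor D\rfloor)$ is also $\Q$-effective, forcing $\kappa(X,K_X+\lfloor D\rfloor)\le 0$.

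If $\kappa(X,K_X+\lfloor D\rfloor)=-\infty$, Theorem \ref{BSV, Intro} with the optimal bound $p_0=5$ concludes. Otherwise $\kappa(X,K_X+\lfloor D\rfloor)=0$; combined with $\Q$-effectivity this forces $K_X+\lfloor D\rfloor\sim_{\Q}0$, and comparing with $-(K_X+D)$ also $\Q$-effective shows that $D$ is reduced and $K_X+D\sim_{\Q}0$. When $D\neq 0$, Step~2-1 in the proof of Theorem \ref{BSV, Intro} applies verbatim and only requires $p>5$.

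The principal case is $D=0$, i.e.\ $X$ is an $F$-split klt Calabi-Yau surface. Let $n$ be the minimal positive integer with $nK_X\sim 0$ (abundance, \cite[Theorem 1.2]{Tan12}). The splitting produces a nonzero global section of $(1-p^e)K_X$; since on a projective surface an effective divisor numerically equivalent to zero vanishes, $(1-p^e)K_X\sim 0$, hence $n\mid p^e-1$ and $\gcd(n,p)=1$. Therefore the index-one cyclic cover $\tau\colon\tilde X\to X$ is \'etale in codimension one, $\tilde X$ is canonical Calabi-Yau, and the $F$-splitting transfers to $\tilde X$. Using the injection $H^0(X,(\Omega_X^{[1]}\otimes\sO_X(-B))^{**})\hookrightarrow H^0(\tilde X,(\Omega_{\tilde X}^{[1]}\otimes\sO_{\tilde X}(-\tau^*B))^{**})$ coming from \'etaleness in codimension one, it suffices to prove the vanishing on $\tilde X$. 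I would then adapt the proof of Proposition \ref{Prop : Lift of canonical CY}: the bound $p>19$ there was used only to force the determinant $d$ of the intersection matrix of $\Exc(\pi)$ on the minimal resolution $\pi\colon Y\to\tilde X$ to be coprime to $p$, so that \cite[Theorems 1.2 and 1.3]{Gra} give $\pi_*\Omega_Y=\Omega_{\tilde X}^{[1]}$. For $F$-split $\tilde X$, every singularity is an $F$-split rational double point, and for $p>5$ these are precisely $A_n$ with $p\nmid n+1$, $D_n$, $E_6$, $E_7$, and $E_8$, with local discriminants $n+1,\,4,\,3,\,2,\,1$, all coprime to $p$; hence $d$ is automatically coprime to $p$. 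The remainder of the proof of Proposition \ref{Prop : Lift of canonical CY}---showing $H^0(Y,\Omega_Y\otimes\sO_Y(K_Y))=0$ in each of the four Calabi-Yau types and invoking Theorem \ref{log smooth lift criterion} to lift $(Y,\Exc(\pi))$ to $W_2(k)$---goes through for any $p>5$. Theorem \ref{Hara's vanishing} together with Proposition \ref{ext thm} then yields the vanishing on $\tilde X$, hence on $X$.

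The main obstacle is the canonical-Calabi-Yau subcase; the new input replacing the $p>19$ bound of Proposition \ref{Prop : Lift of canonical CY} is that the $F$-split hypothesis restricts the rational double points of $\tilde X$ to those with discriminant coprime to $p$ for $p>5$, which is exactly what is needed for the Graf extension isomorphism $\pi_*\Omega_Y=\Omega_{\tilde X}^{[1]}$ and the subsequent $W_2$-liftability and Akizuki-Nakano-style vanishing.
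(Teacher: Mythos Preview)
Your overall strategy matches the paper's: reduce to $\kappa(X,K_X+\lfloor D\rfloor)\le 0$ using the GSFS hypothesis, dispose of $\kappa=-\infty$ by Theorem~\ref{BSV, Intro}, and when $K_X+\lfloor D\rfloor\equiv 0$ with nonzero boundary invoke Step~2-1. The divergence, and the real problem, is in the $D=0$ Calabi-Yau case.

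Two omissions are easily repaired. You never reduce $B$ to a nef and big divisor, which is required in the hypothesis of Theorem~\ref{Hara's vanishing}; the paper does this by Zariski decomposition and a $(K+\epsilon N)$-MMP, using that global $F$-splitness is preserved under birational contractions. Also, when $D=0$ you assert $X$ is klt, but a priori $X$ is only lc; the paper treats the lc-not-klt case by a dlt blow-up, which manufactures a nonzero boundary and returns you to Step~2-1.

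The serious gap is your ``new input'': the claim that for $p>5$ the $F$-split rational double points are exactly $A_n$ with $p\nmid n+1$, $D_n$, $E_6$, $E_7$, $E_8$. This is false. By Hara's characterization every two-dimensional klt singularity is strongly $F$-regular (hence $F$-pure) once $p>5$; a direct check with Fedder's criterion also shows every $A_n$ is $F$-pure (the monomial $x^{p-1}y^{p-1}$ survives in $(xy-z^{n+1})^{p-1}$), including $A_{p-1}$, whose intersection-matrix determinant is $\pm p$. So local $F$-purity does not force the discriminant $d$ to be coprime to $p$, and your route to $\pi_*\Omega_Y=\Omega^{[1]}_{\tilde X}$ via \cite{Gra}, and then to $H^2(Y,T_Y(-\log E))=0$, collapses. (Incidentally, $H^0(Y,\Omega_Y\otimes\sO_Y(K_Y))$ does not vanish when $Y$ is abelian; after the index-one cover only abelian and K3 types occur, and the abelian case is handled directly since $E=0$ and $\Omega_Y\cong\sO_Y^{\oplus 2}$.)

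The paper's fix bypasses the discriminant issue entirely: a globally $F$-split K3 surface is not supersingular by van der Geer--Katsura \cite{GK}, and Remark~\ref{Rem : Lift of canonical CY} (via Lieblich--Maulik \cite{LM18}) lifts $(Y,E)$ to $W(k)$ for any non-supersingular K3, with no constraint on the exceptional configuration. That is the missing idea in your approach.
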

\begin{proof}
By \cite[Theorem 4.4 (ii) and Theorem 4.3 (ii)]{SS10}, it follows that $(X, B)$ is lc and $-(K_X+B)$ is effective.
If $\kappa(X, K_X+\lfloor B\rfloor)=-\infty$, then the assertion follows from Theorem \ref{BSV, Intro}.

Thus we may assume that $K_X+\lfloor B\rfloor\equiv 0$.
First, we assume that $(X, \lfloor B\rfloor)$ is not klt.
By Proposition \ref{ext thm}, we can replace $(X, \lfloor B\rfloor)$ with its dlt blow-up. In this case, the boundary of the dlt pair is non-zero since $(X, \lfloor B\rfloor)$ is not klt. Then the assertion follows from Step~2-1 of the proof of Theorem \ref{BSV, Intro}.
 
Now, we assume that $X$ is klt Calabi-Yau and $B=0$. 
As in Step~2-2 of the proof of Theorem \ref{BSV, Intro}, by considering the Zariski decomposition, we can assume that $D$ is nef and big. Note that the globally $F$-split property is preserved under a birational contraction (\cite[1.1.9 Lemma]{fbook}).
Next, a splitting morphism $F_{*}\sO_X\to \sO_X$ give a non-zero section of $\Hom_{\sO_X}(F_{*}\sO_X, \sO_X)\cong H^0(X, \sO_X((1-p)K_X))$, and together with $K_X\equiv0$, we obtain $(1-p)K_X=0$.
In particular, the minimum positive integer $n$ such that $nK_X=0$ is not divisible by $p$.
We recall the globally $F$-split property is preserved under a finite cover which is \'etale in codimension one (\cite[Lemma 11.1.]{PZ19}).
Thus, by taking a cyclic cover associated to a non-zero global section of $nK_X$, we may assume that $X$ is a canonical Calabi-Yau surface such that $K_X=0$. If $X$ is an abelian surface, then the same argument as Step~2-2 of the proof of Theorem \ref{BSV, Intro} works.
Thus we may assume that the minimal resolution $Y$ of $X$ is a K3 surface. Now, by \cite[1.3.13 Lemma]{fbook} and \cite[5.1 Theorem]{GK}, the K3 surface $Y$ is not supersingular, and an argument of Step~2-2 of the proof of Theorem \ref{BSV, Intro} and Remark \ref{Rem : Lift of canonical CY} show the desired vanishing.
\end{proof}

\begin{rem}
In the proof of Proposition \ref{BSV for GSFS}, we do not need the assumption that $p>5$ in some places (see \cite[Lemma 4.17]{BBKW} and \cite[Proposition 4.18]{BBKW}). 
On the other hand, it is still open whether Proposition \ref{ext thm} holds for $F$-pure surface singularities in characteristic $p\leq5$. This is the reason why we need the assumption that $p>5$ in Proposition \ref{BSV for GSFS}.
\end{rem}

\section{Liftability of surface pairs}\label{sec : Lift of a surface pair}
In this section, we prove Theorems \ref{lift, Intro} and \ref{KVV, Intro}. We also discuss deformations of an lc projective surface whose canonical divisor has negative Iitaka dimension (Proposition \ref{tangent}). 
First, we focus on the vanishing of the second cohomology of the logarithmic tangent sheaf.

\begin{defn}\label{Definition:Q-ableian}
Let $X$ be a normal projective variety.
We say $X$ is \textit{Q-abelian} if there exists a finite surjective morphism $\tau\colon \tilde{X}\to X$ such that $\tilde{X}$ is an abelian variety and $\tau$ is \'etale in codimension one.
\end{defn}

\begin{prop}\label{van of tan}
Let $(X, B)$ be an lc projective surface pair over an algebraically closed field of characteristic $p>0$ such that $B$ is reduced.
When $\kappa(X, K_X+B)=0$, let $(X', B')$ be the $(K_X+B)$-minimal model of $(X, B)$, where $B'$ is the pushforward of $B$.
Suppose that one of the following holds.
\begin{enumerate}
    \item[\textup{(1)}] $\kappa(X, K_X+B)=-\infty$ and $p>5$.
    \item[\textup{(2)}] $\kappa(X, K_X+B)=0$ and one of the following holds.
    \begin{enumerate}
       \item[\textup{(i)}] $B'\neq 0$ and $p>5$,
       \item[\textup{(ii)}] $B'=0$, $X'$ is klt, the Gorenstein index of $X'$ is not divisible by $p$, $X'$ is not Q-abelian, and $p>19$.
    \end{enumerate}
\end{enumerate}
Then $H^2(X, T_X(-\log\,B))=0$.
\end{prop}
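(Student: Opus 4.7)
The plan is to reduce, via Serre duality together with Remark~\ref{Remark:tangent}, either to a Mori fibre space on which Theorem~\ref{BSV, Intro} or Lemma~\ref{LCTF} applies, or (in the remaining Calabi--Yau case) to a concrete cohomological computation on a log resolution. By Serre duality
\[
H^2(X, T_X(-\log D)) \cong H^0\bigl(X, (\Omega_X^{[1]}(\log D) \otimes \sO_X(K_X))^{**}\bigr),
\]
and by Remark~\ref{Remark:tangent} every birational morphism of lc surfaces induces an injection between these $H^2$'s, so I may freely replace $(X, D)$ by the target of any MMP contraction.

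In Case~(1), I would run the $(K_X+D)$-MMP to reach a Mori fibre space $f\colon X'' \to Z$ on which $-(K_{X''}+D'')$ is $f$-ample. In Case~(2)(i), I would first go to the $(K_X+D)$-minimal model $(X',D')$ (so $K_{X'}+D' \equiv 0$, $D' \neq 0$) and then run a $K_{X'}$-MMP, which is permissible because $K_{X'} \equiv -D'$ cannot be pseudo-effective (otherwise the effective class $D'$ would be numerically trivial, a contradiction); the negativity lemma preserves $K_{X''}+D''\equiv 0$ with $D'' \neq 0$ on the resulting $K_{X''}$-MFS $X''\to Z$. In either case, when $\dim Z = 0$ the surface $X''$ is a klt del Pezzo of Picard rank one and $-K_{X''}$ is big (either as the sum $D'' + (-(K_{X''}+D''))$ of an effective and an ample divisor in Case~(1), or directly ample from the $K_{X''}$-MFS structure in Case~(2)(i)); Theorem~\ref{BSV, Intro} applied with $B = -K_{X''}$ and $i=1$ closes the case, since $\kappa(X'',K_{X''}+D'') \in \{-\infty, 0\} \neq 2$. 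When $\dim Z = 1$, $-K_{X''}\cdot F > 0$ on a general fibre $F$ and $-(K_{X''}+D'')$ is $f$-nef, so Lemma~\ref{LCTF} with $B = -K_{X''}$ gives $f_*\bigl((\Omega_{X''}^{[1]}(\log D'')\otimes \sO_{X''}(K_{X''}))^{**}\bigr) = 0$ and hence the desired vanishing.

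For Case~(2)(ii), the minimal model $X'$ is klt Calabi--Yau with $K_{X'}$ torsion of order $n$ coprime to $p$. I would form the degree-$n$ cyclic cover $\tau\colon \tilde X' \to X'$ attached to a trivialisation of $nK_{X'}$. Since $p \nmid n$ it is \'etale in codimension one, so $\tau^{[*]}\sO_{X'}(K_{X'}) \cong \sO_{\tilde X'}$ and $\tau^{[*]}\Omega_{X'}^{[1]} \cong \Omega_{\tilde X'}^{[1]}$, giving an injection
\[
H^0\bigl(X', (\Omega_{X'}^{[1]} \otimes \sO_{X'}(K_{X'}))^{**}\bigr) \hookrightarrow H^0\bigl(\tilde X', \Omega_{\tilde X'}^{[1]}\bigr).
\]
Because $X'$ is not Q-abelian, neither is $\tilde X'$, so its minimal resolution $\pi\colon \tilde Y \to \tilde X'$ is a K3 surface (when $\tilde X'$ has Du Val singularities) or a rational surface (when $\tilde X'$ is strictly klt, by \cite[Lemma 1.4]{AM}). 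Proposition~\ref{ext thm} with $B = 0$ then identifies $H^0(\tilde X', \Omega_{\tilde X'}^{[1]}) \cong H^0(\tilde Y, \Omega_{\tilde Y}(\log E))$ with $E = \Exc(\pi)$. By Propositions~\ref{Prop : Lift of canonical CY} and \ref{Lift of strictly klt CY}, $(\tilde Y, E)$ lifts to $W_2(k)$; the Deligne--Illusie comparison matches Hodge numbers with those of a characteristic-zero lift, and in characteristic zero the residue sequence
\[
0 \to \Omega_{\tilde Y} \to \Omega_{\tilde Y}(\log E) \to \bigoplus_i \sO_{E_i} \to 0
\]
combined with $H^0(\tilde Y, \Omega_{\tilde Y}) = 0$ (K3 or rational) and the injectivity of the Chern-class map $\bigoplus_i k \to H^1(\tilde Y, \Omega_{\tilde Y})$ --- valid because the $E_i$ have a negative-definite intersection matrix and are thus linearly independent in $\Pic(\tilde Y)$ --- forces $H^0(\tilde Y, \Omega_{\tilde Y}(\log E)) = 0$.

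The hard part will be Case~(2)(ii): the MMP alone cannot produce a divisor against which BSV or Lemma~\ref{LCTF} applies, so the whole chain (cyclic cover $\to$ the not-Q-abelian classification of $\tilde Y$ $\to$ extension theorem $\to$ lift of $(\tilde Y, E)$ to $W_2(k)$ $\to$ Deligne--Illusie $\to$ residue sequence in characteristic zero) is needed. The three hypotheses in (2)(ii) each enter once: coprimality of the Gorenstein index with $p$ is what makes $\tau$ \'etale in codimension one, the non-Q-abelian hypothesis excludes the abelian case where $H^0(\Omega)$ would be non-zero, and $p > 19$ is precisely the threshold of Propositions~\ref{Prop : Lift of canonical CY} and \ref{Lift of strictly klt CY} for the lift.
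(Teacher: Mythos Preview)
Your argument for Cases~(1) and~(2)(i) follows the paper's line, but there is a genuine gap in~(2)(i) when $\dim Z=0$. You invoke Theorem~\ref{BSV, Intro} for the pair $(X'',D'')$, but here $\kappa(X'',K_{X''}+D'')=0$, and the bound that Theorem~\ref{BSV, Intro} supplies in that Kodaira-dimension range is not $p_0=5$ but the maximum Gorenstein index of klt Calabi--Yau surfaces (which is at least~$19$ by Remark~\ref{Rem:max Gor index}). So under the hypothesis $p>5$ of~(2)(i) the theorem as stated does not apply. The paper avoids this by citing Step~2-1 of the \emph{proof} of Theorem~\ref{BSV, Intro}, which treats precisely the sub-case $K_W+D_W\equiv 0$ with $D_W\neq 0$ and needs only $p>5$; you should cite that step rather than the packaged theorem.

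For Case~(2)(ii) your route diverges from the paper's and carries further problems. The paper simply observes that, after the cyclic cover to a canonical Calabi--Yau $\tilde X$, the vanishing $H^2(\tilde X,T_{\tilde X})=0$ was already established inside the proof of Proposition~\ref{Prop : Lift of canonical CY}: there Graf's \emph{non-logarithmic} extension theorem (needing $p\nmid d$, verified via $p>19$) gives $\pi_*\Omega_{\tilde Y}\cong \Omega_{\tilde X}^{[1]}$, whence $H^0(\tilde X,\Omega_{\tilde X}^{[1]})=H^0(\tilde Y,\Omega_{\tilde Y})=0$ for $\tilde Y$ a K3 or Enriques surface. Your alternative via the log extension and a ``Deligne--Illusie comparison'' has two issues. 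First, after the cyclic cover of order $n$, the canonical divisor $K_{\tilde X'}$ is \emph{linearly} trivial, so $\tilde X'$ is Gorenstein and hence canonical; its minimal resolution has $K_{\tilde Y}=0$ and is therefore K3 or abelian, never rational --- the ``strictly klt'' branch you describe (and the appeal to \cite[Lemma~1.4]{AM} and Proposition~\ref{Lift of strictly klt CY}) does not arise. Second, Deligne--Illusie yields $E_1$-degeneration of the log Hodge--de~Rham spectral sequence, not equality of Hodge numbers with a characteristic-zero lift; semicontinuity alone goes the wrong way, and turning your sketch into a proof requires the additional ingredients of constancy of log de~Rham cohomology over the DVR plus degeneration on both fibres, none of which you have supplied. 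The paper's direct route through the non-log extension is both shorter and avoids these subtleties.
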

\begin{rem}
All the assumptions on $p$ are sharp (see Examples \ref{sharpness of char}, \ref{Example : num triv pair}, and \ref{Example : sharpness for Kodaira dim 0}).
\end{rem}
\begin{proof}
First, we assume that the condition (1) holds.
We can reduce the desired vanishing to an output of a $(K_X+B)$-MMP by Remark \ref{Remark:tangent}, and hence assume that $X$ admits a $(K_X+B)$-Mori fiber space structure $f\colon X\to Z$.
If $\dim\,Z=1$, then the assertion follows from Lemma \ref{LCTF} since $-K_X$ is $f$-ample.
We next assume that $\dim\,Z=0$. Note that $-K_X$ is $\Q$-Cartier by \cite[Theorem 5.4]{Tan12}. Then it follows from $\rho(X)=1$ that $-K_{X}$ is an ample $\Q$-Cartier $\Z$-divisor, and the assertion follows from Theorem \ref{BSV, Intro}.

Next, we assume that the condition (2)-(i) holds.
It suffices to show that $H^2(X', T_{X'}(-\log\,B'))=0$.
Since $K_{X'}+B'\equiv 0$ and $B'\neq 0$, it follows that $K_{X'}$ is not pseudo-effective. Then we can run a $K_{X'}$-MMP to obtain a birational contraction $\phi\colon X'\to \overline{X}$ to a $K_{\overline{X}}$-Mori fiber space $f\colon \overline{X}\to Z$. 
It suffices to show that $H^2(\overline{X}, T_{\overline{X}}(-\log\,\overline{B}))=0$.
Since $K_{X'}+B'\equiv 0$, the negativity lemma shows that $K_{X'}+B'=\phi^{*}(K_{\overline{X}}+\overline{B})$ and hence $(\overline{X}, \overline{B})$ is log Calabi-Yau, where $\overline{B}\coloneqq \phi_{*}B'$.
If $\dim\,Z=1$, then the assertion follows from Lemma \ref{LCTF} since $-K_{\overline{X}}$ is $f$-ample. If $\dim\,Z=0$, then the assertion follows from Step~2-1 of the proof of Theorem \ref{BSV, Intro} since $-K_{\overline{X}}$ is ample $\Q$-Cartier by \cite[Theorem 5.4]{Tan12}.

Finally, we assume that the condition (2)-(ii) holds.
It suffices to show that $H^2(X', T_{X'})=0$.
We first assume that $X'$ is strictly klt Calabi-Yau. 
Let $n$ be the minimum positive integer such that $nK_{X'}=0$.
Then $n$ is equal to the Gorenstein index by Lemma \ref{Cartier index}, and hence $n$ is not divisible by $p$ by assumption. 
Let $\tau\colon \tilde{X}\to X'$ be a cyclic cover associated to a non-zero global section of $nK_{X'}=0$.
Since $\tau$ is \'etale in codimension one, we have
\begin{align*}
H^2(X', T_{X'})\cong H^0(X', (\Omega_{X'}^{[1]}\otimes \sO_{X'}(K_{X'}))^{**})\hookrightarrow& H^0(\tilde{X}, (\Omega_{\tilde{X}}^{[1]}\otimes \sO_{\tilde{X}}(K_{\tilde{X}}))^{**})\\
\cong&H^2(\tilde{X}, T_{\tilde{X}}),
\end{align*}
Thus we may assume that $X'$ is canonical Calabi-Yau.
By the assumption that $X'$ is not Q-abelian, the minimal resolution of $X'$ is a K3 surface or an Enriques surface. 
In these cases, we have already shown that $H^2(X', T_{X'})=0$ in the proof of Proposition \ref{Rem : Lift of canonical CY}.
\end{proof}

Now, we prove Theorems \ref{lift, Intro} and \ref{KVV, Intro}.

\begin{proof}[Proof of Theorem \ref{lift, Intro}]
Set $B_Y\coloneqq \pi_{*}^{-1}B+\Exc(\pi)$.
Suppose that the condition (1) holds and $p>5$.
Then $\kappa(Y,K_Y+B_Y)=-\infty$ by Lemma \ref{push} (2), and hence $(Y, B_Y)$ lifts to $W(k)$ by Proposition \ref{van of tan} (1) and Theorem \ref{log smooth lift criterion}.

Next, we assume that the condition (2) holds and $p>5$.
By Lemma \ref{Lemma:dlt blow-up}, we can decompose $\pi\colon Y\to X$ into a birational morphism $Y\to W$ and a dlt blow-up $h\colon W\to X$. Then there exists an effective $\Q$-divisor $F$ such that $K_W+B_W+F\equiv h^{*}(K_X+B)\equiv 0$, where $B_W\coloneqq h^{-1}_{*}B+\Exc(h)$.
By assumption, we have $B_W\neq 0$ and hence $H^2(Y, T_Y(-\log\,B_Y))\hookrightarrow H^2(W, T_W(-\log\,B_W))=0$ by Proposition \ref{van of tan} (1) and (2)-(i). 
Moreover, since $-K_X\equiv B$ is strictly effective, it follows that $H^2(Y, \sO_Y)=0$. Now, we conclude that $(Y, B_Y)$ lifts to $W(k)$ by Theorem \ref{log smooth lift criterion}.

Finally, we assume that the condition (3) holds. 
In this case, $\kappa(Y,K_Y+B_Y)\leq 0$ by Lemma \ref{push} (2).
If $\kappa(Y,K_Y+B_Y)=-\infty$ and $p>5$, then $(Y, B_Y)$ lifts to $W(k)$ by (1).
Thus we can assume that $\kappa(Y,K_Y+B_Y)=0$.
By Propositions \ref{Prop : Lift of canonical CY} and \ref{Lift of strictly klt CY}, we can take a positive integer $p_{0}>19$ with the following property; for every klt Calabi-Yau surface $Z$ over an algebraically closed field of characteristic bigger than $p_0$ and every log resolution $f\colon \tilde{Z}\to Z$, the pair $(\tilde{Z}, \Exc(f))$ lifts to $W(k)$. We fix such a $p_0$ and assume that $p>p_0$.
We run a $(K_Y+B_Y)$-MMP to obtain a birational contraction $\phi\colon Y\to Y'$ to the $(K_Y+B_Y)$-minimal model $(Y',B_{Y'}\coloneqq \phi_{*}B_Y)$, which is dlt and log Calabi-Yau.
If $B_{Y'}\neq0$, then $(Y, B_Y)$ lifts to $W(k)$ by (2).
If $B_{Y'}=0$, then we obtain the desired liftability by the assumption of $p_0$.

We will check the sharpness of the explicit bound on $p_0$ in Examples \ref{sharpness of char} and \ref{Example : num triv pair}.
\end{proof}

\begin{proof}[Proof of Theorem \ref{KVV, Intro}]
If $\kappa(X, K_X)=-\infty$ and $p>5$, then we obtain the desired vanishing by Theorem \ref{lift, Intro} and Lemma \ref{Lem:KVV}.  
Similarly, if we take a positive integer $p_0$ as in Theorem \ref{lift, Intro}, $\kappa(X, K_X)=0$, and $p>p_0$, then we obtain the remaining case of (1).
Now, we assume that $\kappa(X, K_X)=1$, $X$ is lc, and $p>3$.
In this case, we have $H^0(X, (\Omega_X^{[1]}\otimes\sO_X(-p^eD))^{**})=0$ for all $e\in \Z_{>0}$ by Theorem \ref{BSV, Intro}.
Then, by the proof of \cite[Lemma 2.5]{Kaw2}, we have an injective morphism $H^1(X, \sO_X(-D))\hookrightarrow H^1(X, \sO_X(-p^{e}D))$ arising from the $e$-th iterated Frobenius morphism. 
Let $\pi\colon Y\to X$ be a log resolution.
By the proof of Lemma \ref{Lem:KVV}, it suffices to show that $H^1(Y, \sO_{Y}(-\lceil p^e\pi^{*}D\rceil))=0$ for $e\gg0$.
We take $m, n\in \Z_{>0}$ such that $p^m(p^n-1)\pi^{*}D$ is Cartier.
Then we obtain 
\[
H^1(Y, \sO_Y(-\lceil p^{m+nl}\pi^{*}D \rceil)\\
=H^1(Y, \sO_Y(-\lceil p^m\pi^{*}D \rceil+(\sum_{i=0}^{l-1}p^{ni})p^m(p^n-1)\pi^{*}D))
\]
and the last cohomology vanishes for $l\gg0$ by \cite[Theorem 2.6]{Tan15}.

We will check the sharpness of the explicit bounds on $p_0$ in Example \ref{sharpness of char}.
\end{proof}

Finally, we apply Proposition \ref{van of tan} to show the vanishing of local-to-global obstruction (Definition \ref{def:local-to-global}).
In particular, we prove Proposition \ref{tangent} (3), which is a positive characteristic analog of \cite[Proposition 3.1]{HP}.

We first recall the definition of local-to-global obstruction, whose vanishing means deformations of singular points extend to a global deformation.

\begin{defn}\label{def:local-to-global}
Let $X$ be a normal projective variety over an algebraically closed field $k$ with only isolated singularities.
Let $\Lambda\coloneqq k$ or a complete discrete valuation ring with residue field $k$.
Let $T$ be a Noetherian scheme of finite type over $\Lambda$ and $o \colon \Spec k \to T$ a morphism.
For every singular point $x\in X$, we suppose that there exists an \'etale morphism $\phi_{x}\colon U_x\to X$ and a closed point $x'\in U_x$ such that 
\begin{itemize}
    \item $\phi_{x}(x')=x$
    \item $U_x$ is smooth outside $x'$, and
    \item there exists a flat morphism $\mathcal{U}_x\to T$ such that the base change of $\mathcal{U}_x$ by $o \colon \Spec k \to T$ is isomorphic to $U_x$.
\end{itemize}
We say $X$ has \textit{no local-to-global obstruction} if there exist
\begin{enumerate}
    \item an \'etale morphism $\iota\colon T'\to T$ from a Noetherian scheme $T'$ with a morphism $o' \colon\Spec\,k\to T'$ such that $\iota\circ o'=o$ and 
    \item there exists a flat projective morphism $\mathcal{X}\to T'$ such that the base change of $\mathcal{X}$ by $o'\colon\Spec\,k\to T'$ is isomorphic to $X$,
\end{enumerate}
such that the formal completions of $\sO_{\mathcal{X},x}$ and $\sO_{\mathcal{U}_x,x'}$ are isomorphic for every singular point $x\in X$.
\end{defn}

\begin{thm}\label{Thm:local-to-global}
Let $X$ be a normal projective variety over an algebraically closed field with only isolated singularities.
Suppose that $H^2(X, T_X)=H^2(X, \sO_X)=0$.
Then $X$ has no local-to-global obstruction.
\end{thm}
\begin{proof}
This is \cite[Theorem 4.14 and Remark 4.15]{LN}.
\end{proof}

\begin{defn}
Let $X$ be a normal projective variety. We say $X$ admits a \textit{$\Q$-Gorenstein smoothing} if there exists a flat projective morphism $\mathcal{X}\to T$ from a normal $\Q$-Gorenstein scheme to a smooth curve $T$ with a closed point $o\in T$ such that the fiber over $o$ is isomorphic to $X$ and $\mathcal{X}\to T$ is smooth over $T\setminus \{o\}$.
\end{defn}

\begin{defn}
Let $R$ be an $F$-finite ring of positive characteristic.
We say that $R$ is \textit{$F$-pure} if the Frobenius map $F\colon R\to F_{*}R$ splits as an $R$-module homomorphism.
We say that a variety $X$ is \textit{$F$-pure} if $\sO_{X,x}$ is $F$-pure for every closed point $x\in X$.
\end{defn}

\begin{prop}\label{tangent}
Let $X$ be an lc projective surface over an algebraically closed field $k$ of characteristic $p>5$ with $\kappa(X, K_X)=-\infty$.
Then $X$ has no local-to-global obstruction.
In particular, the following hold.
\begin{enumerate}
    \item[\textup{(1)}] If $X$ is $F$-pure, then $X$ lifts to $W_2(k)$.
    \item[\textup{(2)}] If $X$ is l.c.i, then $X$ lifts to $W(k)$.  
    \item[\textup{(3)}] If $X$ has only rational double points or toric singularities of class $T$ (see \cite[Definition 3.4]{LN} for the definition), then $X$ admits a $\Q$-Gorenstein smoothing.
\end{enumerate}
\end{prop}
\begin{proof}
By taking $B=0$ in Proposition \ref{van of tan} (1), we have $H^2(X, T_X)=0$.
Together with $H^2(X, \sO_X)=0$, it follows from Theorem \ref{Thm:local-to-global} that $X$ has no local-to-global obstruction.

First, we show (1). By \cite[Corollary 8]{Lan15}, a spectrum of an $F$-pure ring lifts to $W_2(k)$. Since $H^2(X, T_X)=0$, it follows that $X$ lifts to $W_2(k)$ by \cite[Theorem 4.13]{LN}.

Next, we show (2). By applying \cite[Theorem 9.2]{Har2} repeatedly, we can see that an l.c.i. affine variety lifts to $W(k)$. Then it follows that $X$ lifts to a scheme $T'$ \'etale over $W(k)$ since $X$ has no local-to-global obstruction. 
Since $T'$ is smooth over $\Z$, we can conclude that $X$ lifts to $W(k)$ by the proof of \cite[Proposition 2.5]{ABL}.

Finally, (3) follows from \cite[Theorem 5.3]{LN}.
\end{proof}

\section{Sharpness of Theorems \ref{BSV, Intro}, \ref{lift, Intro}, and \ref{KVV, Intro}}\label{sec:examples}

In this section, we observe the failure of Theorems \ref{BSV, Intro}, \ref{lift, Intro}, and \ref{KVV, Intro} in low characteristic or for a surface pair whose log canonical divisor is big.
First, we focus on the characteristic.

\begin{eg}\label{sharpness of char}
By \cite[Theorem 1.7 (3)]{KN}, \cite[Theorem 1.1]{Ber}, and \cite[Proposition 5.2]{ABL}, we can take a klt del Pezzo surface $X$ in each characteristic $p\in \{2,3,5\}$ with more than four singularities and an ample $\Q$-Cartier $\Z$-divisor $D$ on $X$ such that $H^1(X, \sO_X(K_X+D))\neq 0$.
Let $\pi\colon Y\to X$ be the minimal resolution with $E\coloneqq \Exc(\pi)$. Then $-K_Y$ is big and $\kappa(Y, K_Y+E)=-\infty$. 

Firstly, $(Y, E)$ does not lift to any Noetherian local domain with fractional field of characteristic zero because there are no klt del Pezzo surfaces with more than four singularities in characteristic zero by \cite[Theorem 1.1]{Bel}.
In addition, $(Y, E)$ dose not lift to $W_2(k)$ by Lemma \ref{Lem:KVV}, and it follows from Theorem \ref{log smooth lift criterion} that $0\neq H^2(Y, T_Y(-\log\,E))\hookrightarrow H^2(X, T_X)$.
Therefore, the explicit bound $p_0=5$ in Theorems \ref{BSV, Intro}, \ref{lift, Intro} (1), and \ref{KVV, Intro} (1) is optimal.
These examples also show that the sharpness of the assumption $p>5$ in Proposition \ref{van of tan} (1).

By \cite[Section 3,1]{Ray}, we can take a smooth projective surface $X$ in each characteristic $p\in \{2,3\}$ with $\kappa(X, K_X)=1$ and an ample Cartier divisor $D$ on $X$ such that $H^1(X, \sO_X(K_X+D))\neq 0$.
Then \cite[Lemma 2.5]{Kaw2} shows that there exists $n>0$ such that $H^0(X, \Omega_X\otimes \sO_X(-p^nD))\neq 0$. Therefore, the explicit bound $p_0=3$ in Theorems \ref{BSV, Intro}, \ref{KVV, Intro} (2) and the assumption that $p>3$ in Lemma \ref{LCTF} are optimal.
\end{eg}

\begin{eg}\label{Example : num triv pair}
We show that there exists a klt del Pezzo surface $X$ in each characteristic $p\in\{2,3,5\}$ and a non-zero reduced divisor $B$ on $X$ such that $K_X+B\equiv 0$, but $(Y, f_{*}^{-1}B+\Exc(f))$ does not lift to any Noetherian local domain with fractional field of characteristic zero for some log resolution $f\colon Y\to X$ of $(X, B)$.

We first assume $p=5$. We take a del Pezzo surface $X$ with two $A_4$-singularities and a cuspidal rational curve $B$ in the smooth locus of $X$ as in \cite[Example 7.6]{Lac}.
Then we have $K_X+B\equiv 0$ by the adjunction formula. 
We take a three-times blow-up $f\colon Y\to X$ at the cusp of $B$.
Then there exists a contraction $\pi\colon Y\to Z$ to a klt del Pezzo surface $Z$ with five singularities and $\Exc(\pi)\subset f_{*}^{-1}B+\Exc(f)$ (see \cite[Example 7.6]{Lac} for the detail). Then $(Y, \Exc(\pi))$ does not lift to any Noetherian local domain with fractional field of characteristic zero by \cite[Theorem 1.1]{Bel} and neither does $(Y, f_{*}^{-1}B+\Exc(f))$.
When $p=3$, we can take $X=\PP_k^2$ and a curve $B$ as in \cite[Example 7.5]{Lac} to show the assertion.
When $p=2$, we can take a del Pezzo surface $X$ as any one of \cite[Theorem 1.7 (2)]{KN} and $B$ is a general anti-canonical member, which is integral. 

Therefore, the explicit bound on $p_0$ in Theorem \ref{lift, Intro} (2) and the assumption $p>5$ in Proposition \ref{van of tan} (2)-(i) are optimal.
\end{eg}

The following example was taught by Fabio Bernasconi, Iacopo Brivio, and Jakub Witaszek.

\begin{eg}\label{Example : sharpness for Kodaira dim 0}
By \cite[Corollary 1.2]{Shi}, there exists a canonical Calabi-Yau surface $X$ in each characteristic $p\leq 19$ such that $Y$ is a supersingular K3 surface and $E\coloneqq \Exc(\pi)$ consists of $21$ $(-2)$-curves, where $\pi\colon Y\to X$ is the minimal resolution.
Then $(Y, E)$ does not lift to any Noetherian local domain $R$ with fractional field $K$ of characteristic zero. For the sake of contradiction, we assume that there exists a lifting $(\mathcal{Y}, \mathcal{E})$ of $(Y,E)$ to such an $R$. 
Let $Y_{\overline{K}}$ and $E_{\overline{K}}$ be the geometric generic fibers of $\mathcal{Y}\to R$ and $\mathcal{E}\to R$, respectively.

We show that $Y_{\overline{K}}$ is a K3 surface.
Since $H^1(Y, \sO_Y)=0$, a lifting of each invertible sheaf is unique by \cite[Corollary 8.5.5]{FAG}. Then $\omega_{\mathcal{Y}}|_Y=\omega_Y=\sO_Y=\sO_{\mathcal{Y}}|_Y$ shows that $\omega_{\mathcal{Y}}=\sO_{\mathcal{Y}}$.
Together with $\mathcal{X}(Y_{\overline{K}}, \sO_{Y_{\overline{K}}})=\mathcal{X}(Y, \sO_Y)=2$, we conclude that $Y_{\overline{K}}$ is a K3 surface.
Since $Y_{\overline{K}}$ contains $21$ $(-2)$-curves $E_{\overline{K}}$ that is negative definite, we obtain $\rho(Y_{\overline{K}})\geq 22$, a contradiction with the fact that the Picard rank of a K3 surface in characteristic zero is at most $20$ (see \cite[Chapter 17, 1.1]{K3book} for example).
Finally, by the proof Proposition \ref{Prop : Lift of canonical CY}, we obtain $0\neq H^2(Y, T_Y(-\log\,E))\hookrightarrow H^2(X, T_X)$. 

Therefore, $p_0$ in Theorem \ref{lift, Intro} (3) should be at least $19$. Moreover, the assumption that $p>19$ in Propositions \ref{Prop : Lift of canonical CY} and \ref{van of tan} (2)-(ii) is sharp.
\end{eg}

Finally, we close the paper by discussing the assumptions of Iitaka dimensions of log canonical divisors in Theorems \ref{BSV, Intro}, \ref{lift, Intro}, and \ref{KVV, Intro}. 
By Raynaud's counterexample (\cite{Ray}) to the Kodaira vanishing theorem on a smooth projective surface with a big canonical divisor, we can see that Theorems \ref{BSV, Intro}, \ref{lift, Intro}, and \ref{KVV, Intro} do not hold for a surface with a big canonical divisor in any characteristic.
In the next example, we will see that Langer's surface pair \cite{Lan15} shows that Theorems \ref{BSV, Intro} and \ref{lift, Intro} do not hold on a surface pair whose log canonical divisor is big even when the surface itself is rational.

\begin{eg}\label{Example:Langer's surface}
We first recall the construction of Langer's surface pair \cite[Section 8]{Lan15}.
Let $h\colon X\to \PP_k^2$ be the blow-up all the $\mathbb{F}_p$-rational points and $L_1,\ldots, L_{p^2+p+1}$ strict transforms of all the $\mathbb{F}_p$-lines.
Then $X$ is a smooth rational surface and $L_1,\ldots,L_{p^2+p+1}$ are pairwise disjoint smooth rational curves. 

By \cite[Theorem 3.1]{CT}, there exists a nef and big $\Q$-divisor $D$ such that $H^1(X, \sO_X(K_X+\lceil D \rceil))\neq 0$ and $\Supp(\{D\})=\sum_{i=1}^{p^2+p+1}L_i$.
Thus $(X, \sum_{i=1}^{p^2+p+1}L_i)$ dose not lift to $W_2(k)$ by Theorem \ref{Hara's vanishing} and $H^2(X, T_X(-\log \sum_{i=1}^{p^2+p+1}L_i))\neq 0$ by Theorem \ref{log smooth lift criterion}.
Finally, there exists a big divisor $M$ such that $\sO_X(M)$ is contained in $\Omega_X(\log\,\sum_{i=1}^{p^2+p+1}L_i)$ by \cite[Proposition 11.1]{Langer19}.

Now, we check that $K_X+\sum_{i=1}^{p^2+p+1}L_i$ is big except when $p=2$.
Since $L_i^2=-p$ for each $i$ and $L_1,\ldots,L_{p^2+p+1}$ are pairwise disjoint, we can take the contraction $f\colon X\to Z$ of $\sum_{i=1}^{p^2+p+1}L_i$.
By the proof of \cite[Lemma 2.4 (i)]{CT},
we have $K_X+(1-\frac{2}{p})(\sum_{i=1}^{p^2+p+1}L_i)=f^{*}K_Z$ and hence
$K_X+\sum_{i=1}^{p^2+p+1}L_i=\lceil f^{*}K_Z \rceil$.
If $p\neq2$, then $K_Z$ is ample by \cite[Lemma 2.4 (iv)]{CT} and hence $K_X+\sum_{i=1}^{p^2+p+1}L_i$ is big.
Note that if $p=2$, then $\kappa(X, K_X+\sum_{i=1}^{p^2+p+1}L_i)=-\infty$ since $f_{*}(K_X+\sum_{i=1}^{p^2+p+1}L_i)=K_Z$ is anti-ample.

Therefore, Theorems \ref{BSV, Intro}, \ref{lift, Intro} and Proposition \ref{van of tan} do not hold on a surface pair whose log canonical divisor is big even when the surface itself is rational. 
\end{eg}

\begin{rem}\label{Remark:log lift}
For a singular surface, it is often more useful to consider the liftability of a log resolution than that of itself (see \cite{ABL}, \cite{CTW}, and \cite{KN} for example).
In Example \ref{Example:Langer's surface}, we constructed the pathological example from the log resolution of the pair consisting of $\PP_{k}^2$ and all the $\mathbb{F}_p$-lines $\sum_{i=1}^{p^2+p+1}\overline{L}_i$.
However, the pair $(\PP_{k}^2, \sum_{i=1}^{p^2+p+1}\overline{L}_i)$ clearly lifts to $W(k)$.
For this reason, when we discuss lifting of a non-log smooth pair, it is more suitable to consider the liftability of a log resolution of the pair to capture pathologies in positive characteristic.  
\end{rem}

\begin{rem}
Example \ref{Example:Langer's surface} gives a slightly generalization of \cite[Corollary 3.3]{CT}. Indeed, we can drop the assumption $p\geq 3$ and replace $\sum_{i=1}^{q^2+q+1}E_i+\sum_{i=1}^{q^2+q+1}L'_i$ with $\sum_{i=1}^{q^2+q+1}L'_i$ in \cite[Corollary 3.3]{CT}.
On the other hand, this fact also follows from \cite[Proposition 4.1]{Lan} and \cite[Proposition 11.1]{Langer19}. 
\end{rem}


\section*{Acknowledgements}
The author wishes to express his gratitude to Professor Shunsuke Takagi for his suggestions and advice.
The author is also grateful to Masaru Nagaoka for discussion about del Pezzo surfaces, Fabio Bernasconi for giving valuable comments that strengthen results in this paper, Shou Yoshikawa for telling him Remark \ref{Remark:lifting as log smooth pairs}, Teppei Takamatsu, Makoto Enokizono,
Iacopo Brivio, and Jakub Witaszek for helpful discussions.
He also would like to thank the referee for valuable suggestions that improved the paper.
This work was supported by JSPS KAKENHI Grant number JP19J21085.

\bibliography{hoge.bib}
\bibliographystyle{abbrv}

\end{document}